\renewcommand\footnotemark{}
\begin{document}

\title{Symmetric functions and a natural framework for combinatorial and number theoretic sequences}

\author{
  Cormac ~O'Sullivan\footnote{{\it Date:} Mar 6, 2022.
\newline \indent \ \ \
  {\it 2010 Mathematics Subject Classification:} 05E05, 05A15, 11B68, 11B73, 05A17
  \newline \indent \ \ \
Support for this project was provided by a PSC-CUNY Award, jointly funded by The Professional Staff Congress and The City
\newline \indent \ \ \
University of New York.}
  }

\date{}

\maketitle

\def\s#1#2{\langle \,#1 , #2 \,\rangle}

\def\H{{\mathbf{H}}}
\def\F{{\frak F}}
\def\C{{\mathbb C}}
\def\R{{\mathbb R}}
\def\Z{{\mathbb Z}}
\def\Q{{\mathbb Q}}
\def\N{{\mathbb N}}
\def\G{{\Gamma}}
\def\GH{{\G \backslash \H}}
\def\g{{\gamma}}
\def\L{{\Lambda}}
\def\ee{{\varepsilon}}
\def\K{{\mathcal K}}
\def\Re{\mathrm{Re}}
\def\Im{\mathrm{Im}}
\def\PSL{\mathrm{PSL}}
\def\SL{\mathrm{SL}}
\def\Vol{\operatorname{Vol}}
\def\lqs{\leqslant}
\def\gqs{\geqslant}
\def\sgn{\operatorname{sgn}}
\def\res{\operatornamewithlimits{Res}}
\def\li{\operatorname{Li_2}}
\def\lip{\operatorname{Li}'_2}
\def\pl{\operatorname{Li}}

\def\clp{\operatorname{Cl}'_2}
\def\clpp{\operatorname{Cl}''_2}
\def\farey{\mathscr F}

\def\dm{{\mathcal A}}
\def\ov{{\overline{p}}}
\def\ja{{K}}

\let\originalleft\left
\let\originalright\right
\renewcommand{\left}{\mathopen{}\mathclose\bgroup\originalleft}
\renewcommand{\right}{\aftergroup\egroup\originalright}

\newcommand{\stira}[2]{{\genfrac{[}{]}{0pt}{}{#1}{#2}}}
\newcommand{\stirb}[2]{{\genfrac{\{}{\}}{0pt}{}{#1}{#2}}}
\newcommand{\stirc}[2]{{\genfrac{\|}{\|}{0pt}{}{#1}{#2}}}

\newcommand{\norm}[1]{\left\lVert #1 \right\rVert}

\newcommand{\e}{\eqref}


\newtheorem{theorem}{Theorem}[section]
\newtheorem{lemma}[theorem]{Lemma}
\newtheorem{prop}[theorem]{Proposition}
\newtheorem{conj}[theorem]{Conjecture}
\newtheorem{cor}[theorem]{Corollary}
\newtheorem{assume}[theorem]{Assumptions}
\newtheorem{adef}[theorem]{Definition}
\newtheorem{eg}[theorem]{Example}


\newcounter{counrem}
\newtheorem{remark}[counrem]{Remark}

\renewcommand{\labelenumi}{(\roman{enumi})}
\newcommand{\spr}[2]{\sideset{}{_{#2}^{-1}}{\textstyle \prod}({#1})}
\newcommand{\spn}[2]{\sideset{}{_{#2}}{\textstyle \prod}({#1})}

\numberwithin{equation}{section}

\bibliographystyle{alpha}

\begin{abstract}
Certain triples of power series, considered by I.\ Macdonald, give a natural framework for many combinatorial and number theoretic sequences, such as the Stirling, Bernoulli and harmonic numbers and partitions of different kinds. The power series in such a triple are closely linked by identities coming from the theory of symmetric functions. We extend the work of  Z-H.\ Sun, who developed similar ideas, and Macdonald, revealing more of the structure of these triples. De Moivre polynomials  play a key role in this study.
\end{abstract}


\section{Introduction}

We begin with an example due to E.T. Bell from \cite{Bell27} that will help introduce the main ideas. It is based on
Euler's pentagonal number theorem, which states that
\begin{equation} \label{el}
  \prod_{j=1}^\infty (1-q^j) =  \sum_{m=0}^\infty c(m) q^m \qquad \text{for} \qquad
  c(m) = \begin{cases}
  (-1)^r & \text{ if $m=\frac{r(3r-1)}2$ for  $r\in \Z$},\\
  0 & \text{ otherwise. }
  \end{cases}
\end{equation}
Expanding the logarithm of the product shows
\begin{equation} \label{bex}
  \log \prod_{j=1}^\infty (1-q^j) = \sum_{j=1}^\infty \log (1-q^j)
  = -\sum_{j=1}^\infty \sum_{k=1}^\infty \frac{q^{kj}}{k} = -\sum_{m=1}^\infty  \frac{\sigma(m)}{m} q^{m},
\end{equation}
where $\sigma(m)$ indicates the sum of the divisors of $m$. Then exponentiating \e{bex} and comparing coefficients of powers of $q$ gives the equalities
\begin{equation} \label{bex2}
  c(1)= -\sigma(1), \qquad c(2)= \frac {\sigma(1)^2}2  - \frac {\sigma(2)}2, \qquad
  c(3)= -\frac {\sigma(1)^3}6  + \frac {\sigma(1)\sigma(2)}2 -\frac {\sigma(3)}3,
\end{equation}
and, in general, $c(n)$ is a degree $n$ polynomial in $\sigma(1), \dots, \sigma(n)$. Bell called these  {\em partition polynomials}, presumably since they have one term for each of the $p(n)$ partitions of the integer $n$. See \e{bepr2} for his general definition.

One aspect of this is quite ancient and
 arises whenever  polynomials or power series are raised to powers.

\begin{adef} \label{dbf}
{\rm For integers $n$, $k$ with $k\gqs 0$, the {\em De Moivre polynomial} $\dm_{n,k}(a_1, a_2, \dots)$ is defined by
\begin{equation} \label{bell}
    \left( a_1 x +a_2 x^2+ a_3 x^3+ \cdots \right)^k = \sum_{n\in \Z} \dm_{n,k}(a_1, a_2, a_3, \dots) x^n \qquad \quad (k \in \Z_{\gqs 0}).
\end{equation}}
\end{adef}

These polynomial coefficients $\dm_{n,k}$  may be
  traced back  to
  De Moivre's paper \cite{dem} from 1697, and their properties are gathered in \cite{odm}.
If $n<k$ then clearly $\dm_{n,k}(a_1, a_2, a_3, \dots)=0$. If $n\gqs k$ then
 \begin{equation} \label{bell2}
  \dm_{n,k}(a_1, a_2, a_3, \dots) = \sum_{\substack{1j_1+2 j_2+ \dots +mj_m= n \\ j_1+ j_2+ \dots +j_m= k}}
 \binom{k}{j_1 , j_2 ,  \dots , j_m} a_1^{j_1} a_2^{j_2}  \cdots a_m^{j_m},
\end{equation}
where $m=n-k+1$, the sum   is over all possible $j_1$, $j_2$,  \dots , $j_m \in \Z_{\gqs 0}$ and $0^0$ always means $1$. It is a polynomial in $a_1, a_2, \dots, a_{m}$ of homogeneous degree $k$ with positive integer coefficients.

In this notation the general form of \e{bex2} is
\begin{equation}\label{bex3}
  c(n)  = \sum_{k=0}^{n}  \frac{(-1)^k}{k!} \dm_{n,k}\left(\frac{\sigma(1)}{1}, \frac{\sigma(2)}{2}, \frac{\sigma(3)}{3}, \cdots \right).
\end{equation}
This is equivalent to Bell's formulation (see the first example in \cite[p. 44]{Bell27}) and we will see that it follows from \e{cox}.
Besides $c(n)$ and $\sigma(n)$, there is an obvious third sequence to consider here since
\begin{equation}\label{bpart}
  F(q):= \prod_{j=1}^\infty \frac 1{1-q^j} = \sum_{n=0}^\infty p(n) q^n
\end{equation}
is the generating function for the partitions. A similar formula to \e{bex3} with $p(n)$ on the left can be found, as well as the well-known  recurrences,
\begin{align}\label{prec}
  p(n) & = -\sum_{k=0}^{n-1} p(k) c(n-k), \\
  p(n) & = \frac 1n \sum_{k=0}^{n-1} p(k) \sigma(n-k), \label{prec2}
\end{align}
for $n\gqs 1$. 
Here \e{prec} is due to Euler and comes from multiplying \e{el} by \e{bpart} and equating coefficients, while \e{prec2} comes from equating $-F'(q)/F(q)$ and the derivative of \e{bex}.

I.\ Macdonald related the sequences  $c(n)$, $p(n)$ and $\sigma(n)$   to symmetric functions in \cite[Ex. 6, p. 27]{Macd} and the identities we have seen follow easily from the known relations among these functions. He gave many other examples   in \cite[pp. 26 -- 36]{Macd} and they lead naturally   to the following simple abstract definition.

\begin{adef} \label{mcd}
{\rm Let $E(t)$, $H(t)$ and $P(t)$ be three formal power series with $E(t)$ and $H(t)$  having constant term $1$. We may call them a {\em symmetric triple} if they satisfy
\begin{equation} \label{pp}
  P(-t)= \frac d{dt}\log E(t), \qquad P(t)= \frac d{dt}\log H(t).
\end{equation}}
\end{adef}

 Independently, Z-H.\ Sun  studied  {\em Newton-Euler pairs} in \cite{Sun05} -- these are the  $(H(t),P(t))$ part. We will look at almost all of the symmetric triples contained in Macdonald and Sun's  work, find new cases, and reveal more of their overall structure; these triples form a group for instance.

Following Macdonald, it is convenient to write the coefficients of the three series as
\begin{equation}\label{ehpser}
E(t)=\sum_{n=0}^\infty e_{n} t^n,  \qquad H(t)=\sum_{n=0}^\infty h_{n} t^n,   \qquad P(t)=\sum_{n=0}^\infty p_{n+1} t^n,
\end{equation}
noting that the constant term of $P(t)$ is labelled $p_1$, and  $p_0$  is left undefined. In this language our Bell/Euler example is the following symmetric triple. (All Examples \ref{bell-eu} to \ref{logxs} are symmetric triples so we will not always say it explicitly. Also it is understood that $e_n$ and $h_n$ are given for $n\gqs 0$ while $p_n$ is given for $n\gqs 1$.)
\begin{eg}[Partitions, divisor sums] \label{bell-eu}
\begin{alignat*}{3}
  E(t) & = \prod_{j=1}^\infty (1-(-t)^j), & \qquad H(t) & =\prod_{j=1}^\infty \frac 1{1-t^j}, & \qquad P(t) & =\sum_{j=1}^\infty \frac {j \cdot t^{j-1}}{1-t^j}, \\
  \text{with} \qquad
  e_n & = (-1)^n c(n), & \qquad  h_n & = p(n), & \qquad  p_n & =  \sigma(n).
\end{alignat*}

\end{eg}

As we will see, the coefficients $e_n$, $h_n$ and $p_n$ of a symmetric triple are tightly linked, with five convolution identities connecting them, similar to \e{prec}, \e{prec2}, and six transition formulas, similar to \e{bex3}, which correspond to changing the symmetric function basis.

To give a second example based on \cite{Bell27}, define $N_r(n)$ to be the number of ways to express $n$ as a sum of squares of $r$ integers (including the order of the integers and their signs), and let $\omega_2(n)$  be the sum of the odd divisors of $n$. The details of the next symmetric triple, including the $\overline{p}_r(n)$ term, are explained in section \ref{sq-tri}.

\begin{eg}[Representations of sums of squares] \label{bell-eu2} For a positive integer $r$,
\begin{gather*}
  e_n = N_r(n),
  \qquad h_n = \overline{p}_r(n), \qquad p_n =  r (\sigma(n)+\omega_2(n)).
\end{gather*}
\end{eg}
\begin{proof}[Discussion]
 Bell's formula for the number of representations of $n$ as a sum of $r$ squares is equivalent to
\begin{equation}\label{bex4}
  N_r(n)  = \sum_{k=0}^{n}  (-1)^{n-k} \frac{r^{k}}{k!} \dm_{n,k}\left(\frac{\sigma(1)+\omega_2(1)}{1}, \frac{\sigma(2)+\omega_2(2)}{2},  \cdots \right). 
\end{equation}
See \cite[p. 46]{Bell27} for this and generalizations. We used the following simplification of \cite[Eq. (13)]{Bell27}:
$$(2(-1)^n -1)\omega_2(n)+\sigma(n) = (-1)^n(\sigma(n)+\omega_2(n)).$$
The identity \e{bex4} may be compared with Jacobi's  evaluations of $N_2(n)$ and $N_4(n)$ in terms of divisor sums, contained in \cite[Thm. 10.6.1]{AAR}.
The case $r=3$ of \e{bex4} gives class numbers of imaginary quadratic fields; see   \cite{Mort}, for instance, for the connection between $N_3(n)$ and class numbers.
\end{proof}

The plan of this paper is as follows.
Section \ref{dmo}  reviews material we will need on De Moivre polynomials. Sections \ref{sympo} and \ref{trpss} then  describe the relevant aspects of symmetric polynomials and functions, and explain the properties of symmetric triples, including how to multiply, divide and take arbitrary powers of them. The remaining sections look at a wide variety of examples, including those relating to  Bernoulli, Catalan, Cauchy and Stirling numbers, $q$-series, partitions and various polynomial families. We find this setup gives a natural organizing structure that allows these sequences to be studied systematically. New triples also often suggest themselves.

Recall that the Stirling subset numbers $\stirb{n}{k}$  count the number of ways to partition  $n$ elements into $k$ nonempty subsets. The Stirling cycle numbers $\stira{n}{k}$ count the number of ways to arrange $n$ elements into $k$ cycles.
Of particular interest is a similar array of numbers that naturally appears when $p_k$ in \e{ehpser} is set to equal
\begin{equation}\label{hnk}
  H_n^{(k)} :=\frac 1{1^k}+\frac 1{2^k}+\cdots +\frac 1{n^k},
\end{equation}
the harmonic number of order $k$ with $n$ summands, (with $H_n$ also meaning $H_n^{(1)}$). Then $h_k$ can be thought of as a type of harmonic multiset coefficient and we introduce the notation $\stirc{n}{k}$ for it. Unexpectedly, it includes both kinds of Stirling numbers as special cases; see Example \ref{onic}.

In section 8 we show that repeatedly taking powers and compositional inverses of a power series results in a two-parameter family. This allows us to generalize some of the earlier symmetric triple examples.

\section{De Moivre polynomials} \label{dmo}

\subsection{Basic properties}

  As in \cite[Sect. 2]{odm}, for $n,k\in \Z$ with $k\gqs 0$ we have the  relations
\begin{align}\label{esb}
  \dm_{n,0}(a_1, a_2, a_3,   \dots) & = \delta_{n,0}, \\
  \dm_{n,1}(a_1, a_2, a_3,  \dots) & = a_n \qquad \qquad (n\gqs 1), \label{esb2} \\
  \dm_{n,k}(c a_1, c a_2, c a_3, \dots) & = c^k \dm_{n,k}(a_1, a_2, a_3, \dots), \label{mulk}\\
  \dm_{n,k}(c a_1, c^2 a_2, c^3 a_3, \dots) & = c^n \dm_{n,k}(a_1, a_2, a_3, \dots). \label{muln}
\end{align}
By a power series, in this paper, we mean a formal power series $a_0+a_1 x+ a_2 x^2+\dots$ with coefficients $a_j$ in an integral domain $R$ that contains $\Q$. These series form another integral domain denoted $R[[x]]$.  The next basic result is \cite[Prop. 3.1]{odm}.

\begin{prop} \label{comp}
Suppose that $f(x)=a_1 x+a_2 x^2+ \cdots$ and $g(x)=b_0+b_1 x+b_2 x^2+ \cdots$ are two power series in $R[[x]]$. Then
$
  g(f(x))=c_0+c_1 x+c_2 x^2+ \cdots $ is in $R[[x]]$
with
\begin{equation}\label{esto}
 \quad c_n = \sum_{k=0}^n b_k  \cdot \dm_{n,k}(a_1,a_2,\dots).
\end{equation}
\end{prop}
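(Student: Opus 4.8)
The plan is to reduce the statement directly to the defining property \e{bell} of the De Moivre polynomials. Writing out the composition, we have $g(f(x)) = \sum_{k=0}^\infty b_k\, f(x)^k$, and the definition \e{bell} tells us exactly how to expand each power: $f(x)^k = \left(a_1 x + a_2 x^2 + \cdots\right)^k = \sum_{n} \dm_{n,k}(a_1, a_2, \dots)\, x^n$. Substituting this in gives
\begin{equation*}
  g(f(x)) = \sum_{k=0}^\infty b_k \sum_{n} \dm_{n,k}(a_1, a_2, \dots)\, x^n,
\end{equation*}
and the asserted formula for $c_n$ is simply what results from collecting the coefficient of $x^n$, i.e.\ from interchanging the two summations.

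The real work, and the step I expect to be the main obstacle, is justifying that everything above makes sense in the formal power series ring $R[[x]]$ and that the interchange of sums is legitimate — there is an infinite sum over $k$ and no convergence in the analytic sense to appeal to. Here I would use the hypothesis that $f(x)$ has zero constant term. Because $f(x) = a_1 x + a_2 x^2 + \cdots$ is divisible by $x$, the power $f(x)^k$ is divisible by $x^k$, so its coefficient of $x^n$ vanishes whenever $n < k$. (Equivalently, one invokes the remark after Definition \ref{dbf} that $\dm_{n,k} = 0$ for $n < k$.) This is precisely what guarantees both that the composition $g(f(x))$ is a well-defined element of $R[[x]]$ and that only finitely many terms contribute to any fixed power of $x$.

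Concretely, for each fixed $n \gqs 0$, the only indices $k$ for which $\dm_{n,k}(a_1, a_2, \dots)$ can be nonzero are $k = 0, 1, \dots, n$. Thus the coefficient of $x^n$ in $g(f(x))$ is the finite sum $\sum_{k=0}^{n} b_k\, \dm_{n,k}(a_1, a_2, \dots)$, which is a well-defined element of $R$, and the interchange of summation above is valid term by term. This yields $c_n = \sum_{k=0}^n b_k \cdot \dm_{n,k}(a_1, a_2, \dots)$, as claimed, and simultaneously confirms that $g(f(x)) = c_0 + c_1 x + c_2 x^2 + \cdots$ lies in $R[[x]]$.
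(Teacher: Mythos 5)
Your proof is correct and is essentially the standard argument (the paper itself defers to \cite[Prop.\ 3.1]{odm} rather than proving this inline): expanding $g(f(x))=\sum_{k\gqs 0} b_k f(x)^k$ via Definition \ref{dbf} and using $\dm_{n,k}=0$ for $n<k$ to truncate the sum over $k$ is exactly what is needed. You correctly identify and handle the only delicate point, namely that the vanishing of the constant term of $f$ forces $f(x)^k$ to be divisible by $x^k$, so each coefficient of the composition is a finite sum and the composition is a well-defined element of $R[[x]]$.
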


Applying Proposition \ref{comp} with $g(x)=(1+x)^\alpha$, $e^{\alpha x}$ and $\log(1+\alpha x)$ gives
\begin{align}\label{pot}
  [x^n] (1+f(x))^\alpha & = \sum_{k=0}^n \binom{\alpha}{k}  \dm_{n,k}(a_1,a_2,\dots),\\
  [x^n] e^{\alpha f(x)} & = \sum_{k=0}^n \frac {\alpha^k}{k!} \dm_{n,k}(a_1,a_2,\dots),\label{cox}\\
  [x^n] \log(1+ \alpha f(x)) & = \sum_{k=1}^n (-1)^{k-1}\frac{\alpha^k}{k} \dm_{n,k}(a_1,a_2,\dots), \label{log}
\end{align}
where $[x^n]$ extracts  the  coefficient of $x^n$ in each series.
Recall that the general binomial coefficients satisfy $\binom{\alpha}{0}:= 1$  and
\begin{equation}\label{binx}
  \binom{\alpha}{k}:= \frac{\alpha(\alpha-1) \cdots (\alpha-k+1)}{k!}, \qquad  \binom{-\alpha}{k}=(-1)^k\binom{\alpha+k-1}{k}
\end{equation}
for  positive integers $k$ and variable $\alpha$. So the right sides of \e{pot}, \e{cox} and \e{log} are also degree $n$ polynomials in $\alpha$.
Therefore the series $(1+f(x))^\alpha$, $e^{\alpha f(x)}$ and $\log(1+ \alpha f(x))$ make sense for arbitrary $\alpha$,  giving elements of $R[\alpha][[x]]$.

De Moivre polynomials were introduced in \cite{dem}. They were rediscovered and appear explicitly in Arbogast's work from 1800 on higher derivatives of compositions, as discussed in \cite[Sect. 3]{odm}, and in several of Stern's  papers including \cite{Ste} from 1843; see \e{ste}, \e{ste2}. De Moivre polynomials also appear implicitly in formulas of Waring \e{war}, Kramp \e{kr}, \e{kr2} and Bell \e{bepr2}. 
In section 3.3 of Comtet's influential book \cite{Comtet} they are briefly described as `partial ordinary Bell polynomials'. The focus there is on the related `partial Bell polynomials' which include extra factorial terms that are usually not needed.


\subsection{Determinant formulas}

Define the $n \times n$ matrix
\begin{equation*}
  \mathcal{M}_n(t) := \left(
  \begin{matrix}
  a_1 t & 1 & &  \\
  a_2 t & a_1 t & 1 &  \\
  \vdots & & \ddots &  \\
  a_n t & a_{n-1} t & \dots &  a_1 t
  \end{matrix}
  \right),
\end{equation*}
with ones above the main diagonal and zeros above those. 
Also set
\begin{equation*}
  \mathcal{N}_n(t) := \left(
  \begin{matrix}
  a_1 t & 1 & & & \\
  a_2 t & a_1 t & 2 & & \\
  a_3 t & a_2 t & a_1 t & 3 & \\
  \vdots & & & \ddots &  \\
  a_n t & a_{n-1} t & \dots & a_2 t & a_1 t
  \end{matrix}
  \right),
  \qquad
  \mathcal{O}_n(t) := \left(
  \begin{matrix}
   a_1 t & 1 & & & \\
  2a_2 t & a_1 t & 1 & & \\
  3a_3 t & a_2 t & a_1 t & 1 & \\
  \vdots & & & \ddots &  \\
  n a_n t & a_{n-1} t & \dots & a_2 t & a_1 t
  \end{matrix}
  \right),
\end{equation*}
which are the same as $\mathcal{M}_n(t)$ except for multiplying by a factor $j$ on row $j$, above the main diagonal in $\mathcal{N}_n(t)$ and in the first column for $\mathcal{O}_n(t)$.

We will need the next identities in section \ref{sympo}. They are  inspired by \cite[Ex. 8]{Macd} with
\e{far} also giving a version of \cite[Thm. 3.1]{EJ}. See \cite[Sect. 5]{odm} for their proofs.

\begin{prop} \label{detp}
We have
\begin{align}
\label{far}
  \sum_{k=0}^n (-1)^{n+k} t^k \dm_{n,k}(a_1,a_2, \dots) & = \det \mathcal{M}_n(t)\\
  \sum_{k=0}^n (-1)^{n+k} \frac{t^k}{k!} \dm_{n,k}\left(\frac{a_1}1,\frac{a_2}2, \dots\right) & = \frac{1}{n!}\det \mathcal{N}_n(t),\label{far2}\\
  \sum_{k=0}^n (-1)^{n+k} \frac{t^k}{k} \dm_{n,k}\left(a_1,a_2, \dots\right) & = \frac{1}{n}\det \mathcal{O}_n(t).
  \label{far3}
\end{align}
\end{prop}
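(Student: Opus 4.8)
The plan is to prove all three identities by the same two-step strategy: first rewrite each left-hand side as a coefficient extraction from an explicit power series, and then show that the matching determinant, expanded along its last row, satisfies exactly the same recurrence.

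For the first step, put $f(x) = a_1 x + a_2 x^2 + \cdots$ as in Proposition \ref{comp}, so that $\dm_{n,k}(a_1,a_2,\dots) = [x^n] f(x)^k$. Since $f$ has no constant term the inner sums may be extended to $k=\infty$, and \e{pot}--\e{log} (equivalently, Proposition \ref{comp}) identify the three left-hand sides, up to a sign $(-1)^n$, with $[x^n]$ of $\tfrac{1}{1+tf(x)}$, of $e^{-tg(x)}$ where $g(x) = \sum_{j\gqs 1}\tfrac{a_j}{j}x^j$, and of $\log(1+tf(x))$, respectively; in \e{far3} the $k=0$ term is read as $0$, matching the range in \e{log}. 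Differentiating each generating function --- via $(1+tf)\cdot\tfrac{1}{1+tf}=1$, via $u'=-tg'u$ with $xg'(x)=f(x)$ for $u=e^{-tg}$, and via $w'(1+tf)=tf'$ for $w=\log(1+tf)$ --- yields first-order coefficient recurrences, namely $b_n=-t\sum_{j=1}^n a_j b_{n-j}$, then $n u_n = -t\sum_{j=1}^n a_j u_{n-j}$, and finally $n w_n = tna_n - t\sum_{\ell=1}^{n-1}(n-\ell)a_\ell w_{n-\ell}$.

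For the second step I would expand each determinant along its last row, whose entries are essentially $a_n t, a_{n-1}t,\dots,a_1 t$. The key structural point is that all three matrices are lower Hessenberg (zero above the superdiagonal), so deleting the last row and the $j$-th column leaves a minor whose top-right block vanishes; the determinant then factors as (top-left block)$\,\times\,$(bottom-right block). The top-left block is a smaller copy of the same matrix, contributing $\det\mathcal{M}_{j-1}$, $\det\mathcal{N}_{j-1}$, or $\det\mathcal{O}_{j-1}$, while the bottom-right block is lower triangular with an explicit diagonal, contributing $1$ for $\mathcal{M}_n$, the descending product $(n-1)!/(j-1)!$ for $\mathcal{N}_n$ (this is exactly where the superdiagonal factors $1,2,\dots$ enter), and again $1$ for $\mathcal{O}_n$. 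Reindexing the cofactor sum by $\ell=n-j+1$ produces a recurrence for each determinant which, after dividing by $n!$ in the $\mathcal{N}$ case and by $n$ in the $\mathcal{O}$ case, coincides with the power-series recurrence above up to the predictable sign; a routine induction on $n$ then finishes each identity.

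The main obstacle is the third identity. Because $\mathcal{O}_n(t)$ carries the factor $j$ in the first column rather than on the superdiagonal, the last-row expansion produces an exceptional term --- the one reaching all the way back to $\det\mathcal{O}_0$ --- that acquires an extra factor of $n$, and the remaining terms must be reconciled with the $(n-\ell)$-weighting in the recurrence for the coefficients of $\log(1+tf)$. Checking that the exceptional $na_n t$ contribution and the weighted convolution recombine into precisely $nw_n$ is the one genuinely delicate piece of bookkeeping; the cases $n=1,2$ confirm the pattern and guide the induction. By contrast, the first two identities drop out cleanly, since there the power-series recurrences are honest first-order convolutions and the bottom-right blocks have determinant $1$ or a transparent factorial.
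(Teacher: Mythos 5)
Your proposal is correct, but note that the paper itself contains no proof of Proposition \ref{detp}: it defers entirely to Section 5 of the cited reference \cite{odm}, so there is nothing internal to compare against. What you supply is therefore a self-contained argument, and it is the natural one: identify the three left sides with $(-1)^n[x^n]$ of $1/(1+tf)$ and $e^{-tg}$, and $(-1)^{n+1}[x^n]$ of $\log(1+tf)$ (your blanket ``$(-1)^n$'' is off by a sign in the third case, though you flag the sign as ``predictable'' and it does come out right), extract first-order recurrences, and match them against the last-row cofactor expansion of each lower Hessenberg determinant. I checked the pieces: the bottom-right minors have determinant $1$, $(n-1)!/(j-1)!$ and $1$ as you claim, giving $\det\mathcal{M}_n=\sum_{\ell}(-1)^{\ell+1}a_\ell t\det\mathcal{M}_{n-\ell}$ and its analogues, and these do reproduce the series recurrences after multiplying by $(-1)^n$ and dividing by $n!$ or $n$. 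The step you single out as delicate does close up: writing $O_n=\det\mathcal{O}_n$, the expansion gives $O_n=(-1)^{n+1}na_nt+\sum_{\ell=1}^{n-1}(-1)^{\ell+1}a_\ell t\,O_{n-\ell}$, and under the inductive hypothesis $O_{n-\ell}=(-1)^{n-\ell+1}(n-\ell)w_{n-\ell}$ the factor $n-\ell$ cancels exactly against the $(n-\ell)$-weighting in $nw_n=tna_n-t\sum_\ell(n-\ell)a_\ell w_{n-\ell}$, so $O_n=(-1)^{n+1}nw_n$ as required. The only things left implicit are the base cases $\det\mathcal{M}_0=\det\mathcal{N}_0=1$ and the $n=1$ seed for \eqref{far3}, which are immediate.
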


\subsection{Stirling numbers and further results}

As in \cite[Sect. 2]{odm},  generating function arguments show,   for $n\gqs k\gqs 0$,
\begin{equation}\label{bino}
  \dm_{n,k}\left( \binom{\alpha}{0}, \binom{\alpha}{1},\binom{\alpha}{2},  \dots \right)
  = \binom{k\alpha}{n-k},
\end{equation}
and
\begin{alignat}{3}\label{bakb}
  (e^t-1)^k  & = \sum_{n=0}^\infty \frac{k!}{n!}\stirb{n}{k} t^n & \qquad & \implies \qquad & \dm_{n,k}\left( \frac{1}{1!}, \frac{1}{2!}, \frac{1}{3!},  \dots \right) & = \frac{k!}{n!}\stirb{n}{k},
  \\
  (-\log(1-t))^k & = \sum_{n=0}^\infty \frac{k!}{n!}\stira{n}{k} t^n
 & \qquad & \implies \qquad &\dm_{n,k}\left( \frac{1}{1}, \frac{1}{2}, \frac{1}{3},  \dots \right) & =
  \frac{k!}{n!}\stira{n}{k}. \label{baka}
\end{alignat}
We may take the left identities of \e{bakb} and \e{baka} to be our definitions of the Stirling numbers, as in \cite[p. 51]{Comtet}, and develop all their properties from this starting point.
If we add or remove the first coefficient $a_1$ in $\dm_{n,k}$ there is a simple effect, by the binomial theorem:
\begin{lemma}
For $k \gqs 0$,
\begin{align}
  \dm_{n,k}(a_2,a_3, \dots) & = \sum_{j=0}^k (-a_1)^{k-j} \binom{k}{j} \dm_{n+j,j}(a_1,a_2, \dots), \label{add}\\
  \dm_{n,k}(a_1,a_1, \dots) & = \sum_{j=0}^k a_1^{k-j} \binom{k}{j} \dm_{n-k,j}(a_2,a_3, \dots). \label{drop}
\end{align}
\end{lemma}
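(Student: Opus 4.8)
The plan is to read both identities straight off the generating-function definition \e{bell} after a single application of the binomial theorem, so nothing beyond Definition \ref{dbf} is needed. Write $f(x) = a_1 x + a_2 x^2 + \cdots$ for the series attached to $(a_1,a_2,\dots)$ and $\tilde f(x) = a_2 x + a_3 x^2 + \cdots$ for the series attached to the shifted sequence $(a_2,a_3,\dots)$. These are linked by the one algebraic relation $x\,\tilde f(x) = f(x) - a_1 x$, equivalently $f(x) = x\bigl(a_1 + \tilde f(x)\bigr)$; since $f(x)-a_1 x = a_2 x^2 + \cdots$ is divisible by $x$, the series $\tilde f$ is a genuine element of $R[[x]]$ and the manipulations below stay inside $R[[x]]$. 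By \e{bell} we have $\dm_{n,k}(a_1,a_2,\dots) = [x^n]f(x)^k$ and $\dm_{n,k}(a_2,a_3,\dots) = [x^n]\tilde f(x)^k$.

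For \e{add} I would raise $\tilde f = (f - a_1 x)/x$ to the $k$-th power and expand the numerator by the binomial theorem:
$$\tilde f(x)^k = x^{-k}\bigl(f(x) - a_1 x\bigr)^k = \sum_{j=0}^k \binom kj (-a_1)^{k-j}\, x^{-j} f(x)^j.$$
Because $f(x)^j$ vanishes to order $j$, each $x^{-j}f(x)^j$ is an honest power series, and extracting the coefficient of $x^n$ gives $[x^n]x^{-j}f(x)^j = [x^{n+j}]f(x)^j = \dm_{n+j,j}(a_1,a_2,\dots)$. Summing over $j$ is exactly \e{add}.

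For \e{drop} I would instead use $f = x\bigl(a_1 + \tilde f\bigr)$ and expand the bracketed factor:
$$f(x)^k = x^k\bigl(a_1 + \tilde f(x)\bigr)^k = \sum_{j=0}^k \binom kj a_1^{k-j}\, x^k\,\tilde f(x)^j.$$
Extracting the coefficient of $x^n$ now gives $[x^n]x^k\tilde f(x)^j = [x^{n-k}]\tilde f(x)^j = \dm_{n-k,j}(a_2,a_3,\dots)$, which is precisely the right-hand side of \e{drop}; the left-hand side is then $[x^n]f(x)^k = \dm_{n,k}(a_1,a_2,\dots)$, i.e.\ the De Moivre polynomial of the sequence obtained by restoring $a_1$ to the front of $(a_2,a_3,\dots)$.

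There is no serious obstacle: once the relation $x\tilde f = f - a_1 x$ is recorded, both formulas are forced by the binomial theorem and coefficient extraction. The only point deserving care is the index bookkeeping, namely checking that multiplying $f^j$ by $x^{-j}$ (resp.\ $\tilde f^j$ by $x^k$) converts the coefficient of $x^n$ into a De Moivre polynomial at shifted index $n+j$ (resp.\ $n-k$), and confirming that the negative power $x^{-j}$ in the first computation is harmless because $f^j$ vanishes to order $j$. One could equally phrase each computation as an instance of Proposition \ref{comp}, but the direct extraction above is the cleanest route.
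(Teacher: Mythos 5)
Your proof is correct and is precisely the argument the paper intends: the lemma is introduced there with only the remark ``by the binomial theorem,'' and your expansion of $\tilde f(x)^k = x^{-k}(f(x)-a_1x)^k$ and $f(x)^k = x^k(a_1+\tilde f(x))^k$ followed by coefficient extraction is exactly that argument, with the one point needing care (that $x^{-j}f(x)^j$ is a genuine power series) duly checked. Note that you have, rightly, read the left-hand side of \e{drop} as $\dm_{n,k}(a_1,a_2,\dots)$; the ``$a_1,a_1$'' in the stated lemma is evidently a typo, as your computation confirms.
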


In the remainder of this section some required identities are developed, allowing us to keep later proofs relatively self contained. Consult \cite[Chap. 5]{Comtet} and \cite[Sect. 6.1]{Knu} for further information on Stirling numbers.
Using \e{add} to remove the first coefficients in \e{bino}, \e{bakb} and \e{baka}  produces
\begin{align}\label{qwe}
  \dm_{n,k}\left( \binom{\alpha}{1}, \binom{\alpha}{2},\binom{\alpha}{3},  \dots \right)
  & =\sum_{j=0}^k (-1)^{k-j} \binom{k}{j} \binom{\alpha j}{n},\\
  \dm_{n,k}\left(  \frac{1}{2!}, \frac{1}{3!}, \frac{1}{4!}, \dots \right)
  & =\frac{k!}{(n+k)!} \sum_{j=0}^k (-1)^{k-j}  \binom{n+k}{n+j} \stirb{n+j}{j},\label{jf}\\
  \dm_{n,k}\left(  \frac{1}{2}, \frac{1}{3}, \frac{1}{4}, \dots \right)
  & =\frac{k!}{(n+k)!} \sum_{j=0}^k (-1)^{k-j}  \binom{n+k}{n+j} \stira{n+j}{j}.\label{jf2}
\end{align}
Another application of \e{add} shows
\begin{equation*}
  \dm_{n,k}\left( \frac{1}{1!}, \frac{1}{2!}, \frac{1}{3!},  \dots \right) =
  \sum_{j=0}^k (-1)^{k-j} \binom{k}{j}
   \dm_{n+j,j}\left( \frac{1}{0!}, \frac{1}{1!}, \frac{1}{2!},   \dots \right),
\end{equation*}
with the De Moivre polynomial on the right evaluating to $j^n/n!$. Hence by \e{bakb}, we obtain the well-known
\begin{equation} \label{ssub}
  \stirb{n}{k} = \frac 1{k!} \sum_{j=0}^k (-1)^{k-j} \binom{k}{j} j^n \qquad (n,k \gqs 0).
\end{equation}
 The right sides above may be expressed using the difference operator $\Delta$:
\begin{equation}\label{difo}
  \Delta f(t):= f(t+1)-f(t) \qquad \text{with} \qquad \Delta^k f(t) = \sum_{j=0}^k (-1)^{k-j} \binom{k}{j} f(t+j).
\end{equation}
Also $\Delta^k$ applied to $1/t$ gives a useful identity, which may be verified by induction, and we obtain
\begin{equation}\label{difi}
  \Delta^k \frac 1t = \frac{(-1)^k k!}{t(t+1) \cdots (t+k)} = \sum_{j=0}^k (-1)^{k-j} \binom{k}{j} \frac 1{t+j},
\end{equation}
as in \cite[Eqs. (5.40), (5.41)]{Knu}.
We next claim
\begin{equation}\label{difi2}
    \frac{t^k}{(1-t)(1-2t) \cdots (1-k t)} = \sum_{n=k}^\infty \stirb{n}{k} t^n.
\end{equation}
Using \e{difi} with $1/t$ for $t$, the left side of \e{difi2} equals
\begin{equation*}
  \frac{1}{(1/t-1)(1/t-2) \cdots (1/t-k)} = \frac{(-1)^k}{k!} \sum_{j=0}^k \binom{k}{j} \frac{(-1)^j}{1-j t}
  = \sum_{n=0}^\infty t^n \frac{1}{k!} \sum_{j=0}^k \binom{k}{j} (-1)^{k-j} j^n,
\end{equation*}
and the claim is proved with \e{ssub}.

\section{Symmetric polynomials} \label{sympo}

A polynomial is called symmetric if it remains unchanged under any permutation of its variables.
The elementary symmetric polynomials $e_k$ naturally appear when the coefficients of a polynomial are expressed in terms of its roots, and so have a long history.
  In $n$ variables they may be defined as
\begin{equation}\label{elem}
  e_k = e_k(x_1,x_2, \dots, x_n) := \sum_{1\lqs j_1 < j_2< \cdots <j_k\lqs n} x_{j_1} x_{j_2} \cdots x_{j_k},
\end{equation}
where $e_0=1$ and $e_k=0$ for $k\gqs n+1$.
The  power-sum symmetric polynomials are
\begin{equation}\label{pow}
  p_k = p_k(x_1,x_2, \dots, x_n) := x_1^k+x_2^k+ \cdots +x_n^k.
\end{equation}
 Girard in 1625 expressed $p_k$ in terms of the elementary symmetric polynomials for $k=1,2,3,4$. For example, if $k=3$ then $p_3=e_1^3-3e_1 e_2+3e_3$ is true for all $n$, giving
$$
x_1^3+x_2^3+x_3^3 =\left(x_1+x_2+x_3 \right)^3 -3\left(x_1+x_2+x_3 \right)\left(x_1x_2+x_2x_3+x_1 x_3 \right)
+3\left(x_1 x_2 x_3 \right)
$$
when there are $n=3$ variables. Newton's identity of 1707  is
\begin{equation}\label{newto}
  k \cdot e_k =\sum_{j=1}^k (-1)^{j-1} p_j e_{k-j} \qquad (k\gqs 1),
\end{equation}
and Waring in 1762 used it to extend Girard's work to all $k$; see \cite{Funk,Gou}.
Waring's formula may be written conveniently with De Moivre polynomials as
\begin{equation}\label{war}
   p_k  = k\sum_{j=1}^{k} \frac{(-1)^{k-j}}{j} \dm_{k,j}\left(e_1, e_2, e_3,  \dots \right).
\end{equation}
 This gives a natural grouping  on the right since
   $\dm_{k,j}\left(e_1, e_2, e_3,  \cdots \right)$ has homogeneous degree $j$ in   $e_1$, $e_2, \dots$.

The  complete homogeneous symmetric polynomials make a third family and they may be  defined as
\begin{equation}\label{hom}
  h_k = h_k(x_1,x_2, \dots, x_n) := \sum_{1\lqs j_1 \lqs j_2 \lqs  \cdots  \lqs j_k\lqs n} x_{j_1} x_{j_2} \cdots x_{j_k},
\end{equation}
with $h_0=1$. All three types form bases for the symmetric polynomials:

\begin{theorem} \label{bases}
If $f$ in $\Z[x_1, \dots,x_n]$ is symmetric then there exist unique  $\Phi_e, \Phi_h$ in $\Z[x_1, \dots,x_n]$ and $\Phi_p$ in $\Q[x_1, \dots,x_n]$ so that
$$f(x_1, \dots,x_n)=\Phi_e(e_1, \dots, e_n)=\Phi_h(h_1, \dots, h_n)=\Phi_p(p_1, \dots, p_n).$$
\end{theorem}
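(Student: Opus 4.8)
The plan is to settle the elementary-symmetric case over $\Z$ first by the classical leading-term reduction, and then to transfer everything to the $h$ and $p$ bases using the relations among the three families.

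For existence of $\Phi_e$, I would order the monomials of $\Z[x_1,\dots,x_n]$ lexicographically. Given a nonzero symmetric $f$ with leading term $c\,x_1^{a_1}x_2^{a_2}\cdots x_n^{a_n}$, symmetry forces the exponents to be weakly decreasing, $a_1\gqs a_2 \gqs \cdots \gqs a_n$; otherwise a permutation of the variables would produce a lexicographically larger monomial of $f$. The product $c\,e_1^{a_1-a_2}e_2^{a_2-a_3}\cdots e_{n-1}^{a_{n-1}-a_n}e_n^{a_n}$ is symmetric, has integer coefficients, the same total degree, and the same leading monomial, so subtracting it from $f$ yields a symmetric polynomial with strictly smaller leading monomial and no larger degree. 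Since the lexicographic order well-orders the finite set of monomials of degree at most $\deg f$, iterating terminates, and the sum of the subtracted $e$-monomials is the desired $\Phi_e\in\Z[x_1,\dots,x_n]$.

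Uniqueness of $\Phi_e$ amounts to the algebraic independence of $e_1,\dots,e_n$, which I would obtain from the same leading-term calculus: the leading monomial of $e_1^{b_1}\cdots e_n^{b_n}$ is $x_1^{b_1+\cdots+b_n}x_2^{b_2+\cdots+b_n}\cdots x_n^{b_n}$, and the map $(b_1,\dots,b_n)\mapsto\bigl(\sum_{k\gqs 1}b_k,\sum_{k\gqs 2}b_k,\dots,b_n\bigr)$ is a bijection onto weakly decreasing vectors. Distinct monomials in the $e_k$ therefore have distinct leading monomials, so no nonzero polynomial can vanish on them. For the complete homogeneous case, I would use the generating-function relation $\bigl(\sum_k e_k t^k\bigr)\bigl(\sum_j (-1)^j h_j t^j\bigr)=1$, that is, $\sum_{k=0}^{m}(-1)^k e_k h_{m-k}=0$ for $m\gqs 1$. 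Solving this recurrence gives $h_m=(-1)^{m-1}e_m+(\text{a polynomial in }e_1,\dots,e_{m-1})$ with integer coefficients, and symmetrically each $e_m$ lies in $\Z[h_1,\dots,h_n]$. Hence $\Z[h_1,\dots,h_n]=\Z[e_1,\dots,e_n]$, the change of variables is a triangular integer automorphism of this ring, and existence and uniqueness of $\Phi_h$ transfer from the $e$ case.

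For the power sums I would invoke Newton's identity \eqref{newto}, which rearranges to $p_k=(-1)^{k-1}k\,e_k+(\text{a polynomial in }e_1,\dots,e_{k-1})$ with integer coefficients, so $p_k\in\Z[e_1,\dots,e_n]$; conversely, solving \eqref{newto} for $e_k$ divides by $k$, placing $e_k$ in $\Q[p_1,\dots,p_n]$ but not in general in $\Z[p_1,\dots,p_n]$. This yields $\Q[p_1,\dots,p_n]=\Q[e_1,\dots,e_n]$ with the $p_k$ algebraically independent over $\Q$ (the transition is again triangular, now with nonzero rational diagonal entries $(-1)^{k-1}k$), giving existence and uniqueness of $\Phi_p\in\Q[x_1,\dots,x_n]$. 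The genuinely substantive points are the termination of the reduction, which the well-ordering of the lexicographic order supplies, and the algebraic independence underlying every uniqueness claim; the appearance of $\Q$ rather than $\Z$ for $\Phi_p$ is pinned down precisely by the factor $k$ in \eqref{newto}.
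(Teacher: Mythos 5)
Your proof is correct. The paper itself offers no proof of this theorem, deferring entirely to Macdonald, Sect.\ I.2; your argument (lexicographic leading-term reduction and algebraic independence for the $e_k$, the unitriangular transfer to the $h_k$ via $H(t)E(-t)=1$, and Newton's identity \eqref{newto} for the $p_k$, with the division by $k$ correctly accounting for why $\Phi_p$ only lives in $\Q[x_1,\dots,x_n]$) is the standard one found in that reference, so there is nothing to flag.
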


We are following \cite[Sect. I.2]{Macd}, which can be consulted for detailed explanations.
The generating functions for $e_k$ and $h_k$ are
\begin{equation} \label{genf}
  E(t):=\sum_{k=0}^\infty e_k t^k = \prod_{j=1}^n (1+x_j t), \qquad
  H(t):=\sum_{k=0}^\infty h_k t^k = \prod_{j=1}^n \frac 1{1-x_j t},
  \end{equation}
and the power-sums generating function is
\begin{equation} \label{genf2}
  P(t):=\sum_{k=0}^\infty p_{k+1} t^k = \sum_{j=1}^n \frac{x_j}{1-x_j t}.
\end{equation}
Differentiating the logs of  $E(t)$ and $H(t)$ shows the identities
\begin{equation}\label{ideh}
  P(-t)=\frac{E'(t)}{E(t)}, \qquad  P(t)=\frac{H'(t)}{H(t)}.
\end{equation}
In particular, $E'(t)=E(t)P(-t)$ implies  Newton's identity \e{newto}.

\begin{theorem}[Transition formulas] \label{tran}
For $r \gqs 0$, $\ell \gqs 1$,
\begin{align}\label{ehp}
  h_r  & = \sum_{k =0}^r (-1)^{r-k}\dm_{r,k}(e_1,e_2,e_3, \dots),
  &
  e_r  & = \sum_{k =0}^r (-1)^{r-k}\dm_{r,k}(h_1,h_2,h_3, \dots),\\
  e_r & = \sum_{k=0}^{r} \frac{(-1)^{r-k}}{k!} \dm_{r,k}\left(\frac{p_1}{1}, \frac{p_2}{2}, \frac{p_3}{3}, \dots \right), &
  \frac{p_\ell}\ell & = \sum_{k=1}^{\ell} \frac{(-1)^{\ell-k}}{k} \dm_{\ell,k}\left(e_1, e_2, e_3,  \dots \right), \label{ehp2}
  \\
  h_r & = \sum_{k=0}^{r} \frac{1}{k!} \dm_{r,k}\left(\frac{p_1}{1}, \frac{p_2}{2}, \frac{p_3}{3},  \dots \right), &
  \frac{p_\ell}\ell & = \sum_{k=1}^{\ell} \frac{(-1)^{k-1}}{k} \dm_{\ell,k}\left(h_1, h_2, h_3,  \dots \right).
  \label{ehp3}
\end{align}
\end{theorem}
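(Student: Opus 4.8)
The plan is to read off all six identities as coefficient extractions from the two fundamental relations \e{ideh}, using the composition corollaries \e{pot}, \e{cox}, \e{log} together with the product relation $E(t)H(-t)=1$ that comes straight from \e{genf}. First I would record that relation: since $E(t)=\prod_j(1+x_j t)$ and $H(-t)=\prod_j(1+x_j t)^{-1}$, we have $E(t)H(-t)=1$, equivalently $H(t)=1/E(-t)$. Next, integrating \e{ideh} (and using $E(0)=H(0)=1$), I would record the two logarithmic expansions
\begin{equation*}
  \log E(t)=\sum_{m\gqs 1}(-1)^{m-1}\frac{p_m}{m}t^m, \qquad \log H(t)=\sum_{m\gqs 1}\frac{p_m}{m}t^m,
\end{equation*}
which are the heart of the passages between the $p$'s and the $e$'s or $h$'s.

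For the pair \e{ehp} I would write $E(-t)=1+f(t)$ with $f(t)=\sum_{k\gqs 1}(-1)^k e_k t^k$ and apply \e{pot} with $\alpha=-1$ to $H(t)=(1+f(t))^{-1}$. Since $\binom{-1}{k}=(-1)^k$ by \e{binx}, and the coefficients of $f$ form the sequence $((-1)^m e_m)_m$, the scaling rule \e{muln} with $c=-1$ converts $\dm_{r,k}$ of that signed sequence into $(-1)^r\dm_{r,k}(e_1,e_2,\dots)$; collecting the factors $(-1)^k$ and $(-1)^r$ gives $h_r=\sum_{k=0}^r(-1)^{r-k}\dm_{r,k}(e_1,e_2,\dots)$. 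The companion formula for $e_r$ follows from the identical computation with $E$ and $H$ interchanged, the relation $E(t)H(-t)=1$ being symmetric in the two series.

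For \e{ehp2} and \e{ehp3} I would feed the logarithmic expansions above into \e{cox} and \e{log}. Writing $H(t)=\exp\!\big(\sum_{m\gqs 1}(p_m/m)t^m\big)$ and applying \e{cox} with $\alpha=1$ extracts $h_r=\sum_{k=0}^r\frac{1}{k!}\dm_{r,k}(p_1/1,p_2/2,\dots)$, the first identity of \e{ehp3}; the same step applied to $E(t)=\exp(\log E(t))$, whose exponent has coefficients carrying the extra factor $(-1)^{m-1}$, yields the first identity of \e{ehp2} after absorbing the signs through \e{mulk} and \e{muln} (factor $-1$ out of the whole sequence and then out of the odd-indexed arguments, producing the net sign $(-1)^{r+k}=(-1)^{r-k}$). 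Dually, writing $\log H(t)=\log\!\big(1+\sum_{k\gqs 1}h_k t^k\big)$ and applying \e{log} with $\alpha=1$ gives $p_\ell/\ell=\sum_{k=1}^\ell\frac{(-1)^{k-1}}{k}\dm_{\ell,k}(h_1,h_2,\dots)$, the second identity of \e{ehp3}; the same step with $\log E(t)$ puts $(-1)^{\ell-1}p_\ell/\ell$ on the left, so multiplying through by $(-1)^{\ell-1}$ delivers the second identity of \e{ehp2}.

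I expect the only real friction to be the sign bookkeeping rather than any conceptual difficulty: each of the six identities is a single application of one of \e{pot}, \e{cox}, \e{log}, but every $E$-side computation acquires an alternating factor, either $(-1)^{m-1}$ coming from $\log E$ or $(-1)^k$ coming from $\binom{-1}{k}$, and these must be converted cleanly into the homogeneity-degree factors $(-1)^r$ or $(-1)^\ell$ supplied by \e{mulk} and \e{muln}. The prudent course is to fix the sign convention once and reuse it, so that the four $E$-side identities are all handled uniformly and no ad hoc index chasing is needed.
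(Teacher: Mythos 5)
Your proposal is correct and follows essentially the same route as the paper: the pair \e{ehp} from $H(t)E(-t)=1$ via \e{pot} with $\alpha=-1$, and the remaining four identities from the logarithmic expansions of $E$ and $H$ in terms of the $p_m/m$ via \e{cox} and \e{log}, with \e{mulk} and \e{muln} absorbing the signs. The only cosmetic difference is that you obtain $\log E(t)=\sum_{m\gqs 1}(-1)^{m-1}(p_m/m)t^m$ by integrating \e{ideh}, whereas the paper reads off $\log E(-t)=-\sum_{r\gqs 1}(p_r/r)t^r$ directly from the product over the $x_j$; these are the same expansion.
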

\begin{proof}
Clearly $H(t)E(-t)=1$
and hence, by \e{pot} with $\alpha=-1$, we obtain \e{ehp}. Writing
\begin{equation} \label{dep2}
  \log E(-t) = \sum_{j=1}^n \log(1-x_j t) = -\sum_{j=1}^n \sum_{r=1}^\infty \frac{(x_j t)^r}{r}
  = - \sum_{r=1}^\infty \frac{p_r}{r}  t^r,
\end{equation}
and similarly for $\log H(t)$,  shows:
\begin{equation}\label{dep}
  E(-t) = \exp\left( - \sum_{r=1}^\infty \frac{p_r}{r}  t^r\right), \qquad \log H(t) = \sum_{r=1}^\infty \frac{p_r}{r}  t^r, \qquad
  H(t) = \exp\left(  \sum_{r=1}^\infty \frac{p_r}{r}  t^r\right).
\end{equation}
By \e{cox} and \e{log} we obtain
\e{ehp2} and \e{ehp3} from \e{dep2} and \e{dep}.
\end{proof}

Theorem \ref{tran} has been known 
for a long time, with the right equality in \e{ehp2}  the Girard-Waring formula again. Expressing these transition formulas with  De Moivre polynomials has the advantage of providing a clear and uniform description. See \cite[Eq. (2.14)', Ex. 20]{Macd} for equivalent formulas as sums over partitions and also \cite[Thm. 2.2]{Sun05}.  With Proposition \ref{detp}  we may give determinant versions, as in \cite[Ex. 8]{Macd}. For example, using \e{far2} makes the left identity in \e{ehp3} into
\begin{equation*}
   h_n= \frac{1}{n!}\det
\left(
  \begin{matrix}
  p_1  & -1 & & & \\
  p_2  & p_1  & -2 & & \\
  p_3  & p_2  & p_1  & -3 & \\
  \vdots & & & \ddots &  \\
  p_n  & p_{n-1}  & \dots & p_2  & p_1
  \end{matrix}
  \right).
\end{equation*}
See \cite{Gou} for historical references to such determinant formulas, including Salmon's for $e_k$ and $p_k$  in 1876.

Note that the transition formulas are finite sums and independent of the $x_j$s and $n$. Using $\tilde{e}_k:=(-1)^{k-1}e_k$ instead of $e_k$ (and simplifying with \e{mulk}, \e{muln}) makes the signs appearing in Theorem \ref{tran}   more uniform; see also \e{rem2}.
By Theorem \ref{bases} we expect the polynomials expressing $p_\ell$ in terms of $e_k$ and $h_k$ on the right of \e{ehp2}, \e{ehp3} to have integral coefficients. This may be verified since the $\gcd$ of the coefficients of $\dm_{\ell,k}$ is $k/\gcd(\ell,k)$ by \cite[Thm. 4.1]{odm}.

Finally in this section we mention a family of symmetric polynomials  that were studied by MacMahon and  only include the terms of $h_k$ with $r$ distinct indices:
\begin{equation}\label{sss}
  S_{k,r}=S_{k,r}(x_1,x_2, \dots, x_n) := \sum_{\substack{ \alpha_1+ \alpha_2+\cdots + \alpha_r=k \\
  1\lqs j_1 < j_2 <  \cdots  < j_r\lqs n}} x^{\alpha_1}_{j_1} x^{\alpha_2}_{j_2} \cdots x^{\alpha_r}_{j_r}.
\end{equation}
They link $e_k$, $h_k$ and $p_k$ together with
\begin{equation*}
  e_k=S_{k,k}, \qquad h_k = \sum_{r=0}^k S_{k,r}, \qquad p_k=S_{k,1},
\end{equation*}
and based on his work in \cite[Vol. 1, p. 5]{Macm}, (see also \cite[Ex. 19]{Macd}, \cite[Lemma 2]{Hoff}), we have
\begin{equation}\label{mac}
  S_{k,r}= \sum_{j=r}^k (-1)^{j-r} \binom{j}{r} e_{j} h_{k-j}, \qquad \qquad
  e_{j} h_{k-j} = \sum_{r=j}^k  \binom{r}{j} S_{k,r}.
\end{equation}

\section{Symmetric triples} \label{trpss}

\subsection{Fundamental properties}
If the variables $x_1,x_2, \dots, x_n$ are assigned certain values, then the symmetric polynomials $e_k$, $h_k$ and $p_k$ also become numbers.  For a simple example, set each $x_j$ to $1$. Then by \e{genf}, \e{genf2} we easily find
\begin{equation*}
  E(t)=(1+t)^n, \qquad H(t)=(1-t)^{-n}, \qquad P(t)=\frac{n}{1-t},
\end{equation*}
so that
\begin{equation} \label{nck}
  e_k=\binom{n}{k}, \qquad h_k = \binom{n+k-1}{k}, \qquad p_k = n.
\end{equation}
These numbers are related by the transition formulas \e{ehp}, \e{ehp2}, \e{ehp3}, as well as the other identities we have seen, such as Newton's identity \e{newto}.  Focussing on the relationships between $e_k$, $h_k$ and $p_k$, we may discard  the original $x_j$s and $n$ and work directly with the symmetric triples in Definition \ref{mcd}. The following two propositions develop their basic properties and the proofs are straightforward exercises.

\begin{prop}\label{tri}
Let $(E(t),H(t),P(t))$ be a symmetric triple. Then we have
\begin{equation} \label{tub}
  H(t)E(-t)=1, \qquad E(t)P(-t)=E'(t), \qquad H(t)P(t)=H'(t),
\end{equation}
as well as the  variants
\begin{equation}\label{tub2}
  P(t)=E(-t)H'(t)=E'(-t)H(t).
\end{equation}
 The transition formulas of Theorem \ref{tran}
for their coefficients $e_k$, $h_k$ and $p_k$ all hold, and it follows that a symmetric triple is uniquely defined by specifying one of the formal series $E(t)$, $H(t)$ or $P(t)$. Also,
\begin{equation}\label{ones}
e_1=h_1=p_1.
\end{equation}
\end{prop}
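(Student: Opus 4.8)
The plan is to derive each assertion directly from Definition \ref{mcd}, exploiting the fact that the two defining relations in \e{pp} are logarithmic derivatives. First I would establish the three identities in \e{tub}. The second and third are immediate: rewriting $P(-t)=\frac{d}{dt}\log E(t)=E'(t)/E(t)$ gives $E(t)P(-t)=E'(t)$ after clearing the denominator (legitimate since $E(t)$ has constant term $1$ and so is a unit in $R[[t]]$), and similarly $P(t)=H'(t)/H(t)$ gives $H(t)P(t)=H'(t)$. For the first identity $H(t)E(-t)=1$, I would compute the logarithmic derivative of the product $H(t)E(-t)$: by the product rule its logarithmic derivative is $\frac{H'(t)}{H(t)}+\frac{d}{dt}\log E(-t)=P(t)+(-1)\cdot P(t)=0$, where the second term uses the chain rule together with the first relation in \e{pp} evaluated at $-t$. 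A power series with zero logarithmic derivative is constant, and since both $H(t)$ and $E(-t)$ have constant term $1$, that constant is $1$, giving $H(t)E(-t)=1$.

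Next I would obtain the variants \e{tub2}. Starting from $H(t)E(-t)=1$ and differentiating gives $H'(t)E(-t)-H(t)E'(-t)=0$, so $H'(t)E(-t)=H(t)E'(-t)$. Substituting $H'(t)=H(t)P(t)$ and $E'(-t)=E(-t)P(t)$ (the latter from the second identity of \e{tub} with $t$ replaced by $-t$) into the two sides, and dividing out the unit $H(t)$ or $E(-t)$ as appropriate, yields $E(-t)H'(t)=P(t)H(t)\cdot E(-t)/H(t)$; more cleanly, $E(-t)H'(t)=E(-t)H(t)P(t)=P(t)$ using $E(-t)H(t)=1$, and $E'(-t)H(t)=E(-t)P(t)H(t)=P(t)$ the same way. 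This gives both expressions in \e{tub2}.

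For the transition formulas, I would invoke Theorem \ref{tran}, observing that its proof used only the relations \e{dep2}, \e{dep} among the generating functions, which are exactly the exponential/logarithmic forms of \e{pp}; since a symmetric triple satisfies \e{pp} by definition, the same computation applies verbatim, with no reference to the underlying $x_j$. Uniqueness then follows: specifying $P(t)$ determines all $p_k$, hence all $e_k$ and $h_k$ via \e{ehp2}, \e{ehp3}; specifying $E(t)$ (resp. $H(t)$) determines the $e_k$ (resp. $h_k$), hence the $p_k$ by the right identities of \e{ehp2} (resp. \e{ehp3}), and then the remaining series. Finally \e{ones} is read off by comparing the lowest-order coefficients: taking the coefficient of $t^0$ in $E(t)P(-t)=E'(t)$ gives $e_0 p_1=e_1$, i.e. $p_1=e_1$ since $e_0=1$, and likewise $h_0 p_1=h_1$ from $H(t)P(t)=H'(t)$ gives $p_1=h_1$.

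I do not expect a genuine obstacle here, as the proposition is explicitly flagged as a straightforward exercise; the only point requiring a little care is the first identity of \e{tub}, where one must justify that a formal power series whose logarithmic derivative vanishes is a constant, and then fix that constant using the normalization that $E$ and $H$ have constant term $1$.
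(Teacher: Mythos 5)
Your proof is correct and is exactly the routine verification the paper has in mind: the paper itself gives no proof of Proposition \ref{tri}, stating only that "the proofs are straightforward exercises," and your argument (clearing denominators in \e{pp}, killing the logarithmic derivative of $H(t)E(-t)$ and fixing the constant via the normalization $E(0)=H(0)=1$, rerunning the proof of Theorem \ref{tran} from \e{pp} alone, and reading off \e{ones} from constant terms) is the natural way to fill it in. No gaps.
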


\begin{prop}\label{group}
If $(E(t),H(t),P(t))$ is a symmetric triple then so are
\begin{align}
  (H(t),E(t),P(-t)), &  \label{macd1}\\
  (E(t)^\alpha,H(t)^\alpha,\alpha P(t)), & \qquad \text{for $\alpha$ any constant},\label{macd2}\\
  (E(-\phi(-t)),H(\phi(t)),\phi'(t) P(\phi(t))), & \qquad \text{for $\phi(t)$ any
  power series with $\phi(0)=0$}.\label{macd3}
\end{align}
Two symmetric triples $(E_1,H_1,P_1)$ and $(E_2,H_2,P_2)$ may be combined to get a third:
\begin{equation}
  (E_1 \cdot E_2,H_1 \cdot H_2,P_1+P_2). \label{macd4}
\end{equation}
\end{prop}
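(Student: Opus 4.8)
The plan is to verify, for each of the four constructions, that the two defining relations of \e{pp} hold for the proposed new triple, together with the requirement that its first two series have constant term $1$. Throughout I would work with the logarithmic-derivative form of \e{pp}, namely $E'(t)/E(t)=P(-t)$ and $H'(t)/H(t)=P(t)$, since every operation here acts naturally on $\log E$ and $\log H$. The constant-term conditions are immediate in all four cases: products and $\alpha$-th powers of series with constant term $1$ again have constant term $1$ (the latter being well defined as a formal power series by the discussion following \e{log}), and $\phi(0)=0$ gives $E(-\phi(0))=H(\phi(0))=E(0)=H(0)=1$.

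For \e{macd1} the two relations of the original triple are simply interchanged: requiring that $P(-(-t))=\frac{d}{dt}\log H(t)$ and $P(-t)=\frac{d}{dt}\log E(t)$ hold for $(H,E,P(-t))$ is exactly the content of \e{pp} read in the opposite order. For \e{macd2} I would use $\frac{d}{dt}\log f(t)^\alpha=\alpha\,\frac{d}{dt}\log f(t)$ with $f=E$ and $f=H$, so both relations acquire the common factor $\alpha$ and match the new power-sum series $\alpha P(t)$. For \e{macd4} I would use additivity of the logarithm, $\frac{d}{dt}\log(E_1E_2)=\frac{d}{dt}\log E_1+\frac{d}{dt}\log E_2=P_1(-t)+P_2(-t)$, and likewise for $H_1H_2$, so the new power-sum series is $P_1+P_2$, as claimed.

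The one construction needing care is the substitution \e{macd3}, and this is where I expect the only genuine bookkeeping. For the $H$-relation the chain rule applies directly: $\frac{d}{dt}\log H(\phi(t))=\phi'(t)\,H'(\phi(t))/H(\phi(t))=\phi'(t)P(\phi(t))$, which is the proposed new $P(t)$. For the $E$-relation the sign flip must be tracked, so I would write the new first series as $E(\psi(t))$ with $\psi(t):=-\phi(-t)$, noting $\psi'(t)=\phi'(-t)$. Then $\frac{d}{dt}\log E(\psi(t))=\psi'(t)\,E'(\psi(t))/E(\psi(t))=\psi'(t)P(-\psi(t))=\phi'(-t)P(\phi(-t))$, using $E'/E=P(-\,\cdot\,)$ and $-\psi(t)=\phi(-t)$. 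Finally I would check that this equals the new power-sum series evaluated at $-t$, that is, $\bigl(\phi'(s)P(\phi(s))\bigr)\big|_{s=-t}=\phi'(-t)P(\phi(-t))$, confirming the match. The remaining verifications are purely mechanical, and the four parts together yield the group structure mentioned after Definition \ref{mcd}.
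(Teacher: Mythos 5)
Your verification is correct and is exactly the direct check of the two relations in \eqref{pp} that the paper intends when it declares these proofs to be straightforward exercises; the only delicate point, the sign bookkeeping in \eqref{macd3} via $\psi(t)=-\phi(-t)$, is handled properly. Nothing is missing.
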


With  \e{macd2} for $\alpha=-1$ and \e{macd4}, these triples clearly form an abelian group with identity $(1,1,0)$. From the effect on $H(t)$ we will just call this group operation multiplication.
Proposition \ref{group} does not seem to have appeared before, though \e{macd2} is the first part of \cite[Thm. 2.4]{Sun05}.

\begin{eg}[Binomial coefficients] \label{ex-bc}
\begin{equation}\label{weec}
  \left((1+t)^\alpha, \ (1-t)^{-\alpha}, \ \frac{\alpha}{1-t}\right) \qquad \text{with} \qquad e_k=\binom{\alpha}{k}, \ h_k=\binom{k+\alpha-1}{k}, \ p_k= \alpha.
\end{equation}
\end{eg}
\begin{proof}[Discussion]
This is \cite[Ex. 1]{Macd}. Here $\alpha$ is an arbitrary complex number or variable and it is easy to verify that the conditions of Definition \ref{mcd} are satisfied. Each of the equalities in \e{tub}, \e{tub2} lead to convolution identities similar to Newton's identity \e{newto}, which in this case gives (as in \cite[Eq. (5.16)]{Knu})
\begin{equation*}
  k\binom{\alpha}{k} = (-1)^{k-1}\sum_{j=0}^{k-1} (-1)^{j} \alpha\binom{\alpha}{j}.
\end{equation*}
The transition formula expressing $p_k$ in terms of $e_k$ (Girard-Waring) yields
\begin{equation} \label{dart}
  \frac{\alpha}{r}=\sum_{k=1}^r \frac{(-1)^{k+r}}{k} \dm_{r,k}\left( \binom{\alpha}{1}, \binom{\alpha}{2},\binom{\alpha}{3},  \dots \right).
\end{equation}
It follows from \e{qwe}, after some simplifications,  that  \e{dart} is equivalent to
\begin{equation*}
  \sum_{k=0}^r (-1)^{k}  \binom{r}{k} \binom{\alpha k-1}{r-1} = 0.
\end{equation*}
This can be seen directly using \e{difo}. Since each difference lowers the degree, the $r$th difference of the degree $r-1$ polynomial $\binom{\alpha x-1}{r-1}$ in $x$ must be zero.
  The transition formula for $e_k$ in terms of $p_k$ also shows
\begin{align}
  \binom{\alpha}{r} & = \sum_{k=0}^{r} \frac{(-1)^{r+k}}{k!} \dm_{r,k}\left(\frac{\alpha}{1}, \frac{\alpha}{2}, \frac{\alpha}{3}, \cdots \right) \notag\\
  & = \sum_{k=0}^{r} (-1)^{r+k}\frac{\alpha^k}{k!} \dm_{r,k}\left(\frac{1}{1}, \frac{1}{2}, \frac{1}{3}, \cdots \right)
  = \frac{1}{r!} \sum_{k=0}^{r} (-1)^{r-k} \stira{r}{k} \alpha^{k}, \label{dog}
\end{align}
by \e{mulk} and  \e{baka}, giving the well-known expansion of the falling factorial as in \cite[Eq. (6.13)]{Knu}. Equivalently, for the rising factorial,
\begin{equation}\label{rise}
  \alpha(\alpha+1) \cdots (\alpha+r-1) = \sum_{k=0}^{r} \stira{r}{k} \alpha^{k}.
\end{equation}
MacMahon's polynomial \e{sss} takes the elegant form
\begin{equation*}
  S_{k,r} = \binom{k-1}{r-1} \binom{\alpha}{r} \qquad (k\gqs 1),
\end{equation*}
after simplification.
See also \cite[Sect. 3.2]{Egg} for further identities in this binomial case.
\end{proof}

As an example of the manipulations that are possible with Proposition \ref{group}, consider \e{weec} with $\alpha=1/2$. Also apply \e{macd1} and we obtain the two symmetric triples
\begin{equation}\label{twos}
  \left((1+t)^{1/2}, \ (1-t)^{-1/2}, \ \frac{1}{2(1-t)}\right), \qquad  \left((1-t)^{-1/2}, \ (1+t)^{1/2},  \ \frac{1}{2(1+t)}\right).
\end{equation}
Multiplying them produces
\begin{equation}\label{h1}
  (E(t),H(t),P(t)):=  \left(\left(\frac{1+t}{1-t}\right)^{1/2}, \ \left(\frac{1+t}{1-t}\right)^{1/2},  \ \frac{1}{1-t^2}\right).
\end{equation}
Dividing the first by the second in \e{twos} produces
\begin{equation}\label{h2}
  (E^\dag(t),H^\dag(t),P^\dag(t)):=  \left((1+t)^{1/2}(1-t)^{1/2}, \  (1+t)^{-1/2}(1-t)^{-1/2},  \ \frac{t}{1-t^2}\right).
\end{equation}
Then we easily find $p_{2n-1}=1$, $p_{2n}=0$ and $p^\dag_{2n-1}=0$, $p^\dag_{2n}=1$. Also
$$
H^\dag(t)=(1-t^2)^{-1/2} \qquad \text{implies} \qquad h^\dag_{2n}=(-1)^n \binom{-1/2}{n}=\binom{n-1/2}{n}=\frac 1{2^{2n}}\binom{2n}{n},
$$
as in \cite[(5.36), (5.37)]{Knu}, with $h^\dag_{2n-1}=0$. Similarly $h_{2n-1}=h_{2n}=2^{-2n}\binom{2n}{n}$. By the transition formula between $h_k$ and $p_k$ we obtain the following identities, equivalent to those  given in \cite[Ex. 15]{Macd}:
\begin{equation}\label{macoe}
  \sum_{k=0}^{2n} \frac{1}{k!} \dm_{2n,k}\left(\frac 1{1}, 0, \frac 1{3}, 0, \frac 1{5},0, \dots \right)  =
  \sum_{k=0}^{2n} \frac{1}{k!} \dm_{2n,k}\left(0,\frac 1{2}, 0, \frac 1{4}, 0, \frac 1{6},\dots \right)  = \frac 1{2^{2n}}\binom{2n}{n}.
\end{equation}

Symmetric triples correspond to homomorphisms $\phi$ from the ring $\Lambda$ of symmetric functions to a simpler ring, such as $\Q[\alpha]$ in Example \ref{ex-bc}. As described in \cite[Sect. I.2]{Macd} and \cite[Chap. 2]{MR15},
symmetric functions  are formal power series in infinitely many variables $x_1, x_2, \dots$ with coefficients in $\Z$ (or sometimes $\Q$). They have  bounded degree and are unchanged under any finite permutation of these variables. Symmetric polynomials appear as the special case  when $x_{n+1}$, $x_{n+2}, \dots$ are all set to $0$. The elements  $e_k$, $h_k$ and $p_k$  defined by \e{elem}, \e{pow} and \e{hom}  with $n=\infty$ each form a basis of $\Lambda$, as in
Theorem \ref{bases}, so that $\phi$ may be defined uniquely by specifying it on one of these bases. The homomorphism $\phi$ is called a specialization, see \cite[Sect. 7.8]{Sta}, and the coefficients  of the series in a symmetric triple $(E(t), H(t), P(t))$ correspond to $\phi(e_k)$, $\phi(h_k)$ and $\phi(p_k)$ in this picture.
For each of our triple examples, the images of the other bases of $\Lambda$, such as the Schur functions, can be considered. See for example \cite[Ex. 23, p. 58]{Macd}.

If $E(t)$  is a polynomial of degree $n$, then  $x_1$, $x_2, \dots, x_n$ may be recovered as the negative reciprocals of the zeros of $E(t)$, as in \e{genf}. If $E(t)$ is not a polynomial then it can be instructive to look for zeros of $E(t)$, or equivalently poles of $H(t)$. However,  $E(t)$ is a formal series and not necessarily a well-defined function. 

\subsection{Changing variables in a triple} \label{calc}
The following properties are based on \e{macd3} and will be useful, especially in section \ref{etype}.
If $(E,H,P)$ is a symmetric triple then the change of variable $t \to ct$ gives the new triple
\begin{equation} \label{ccc}
  (\tilde{E},\tilde{H},\tilde{P})= \left( E(ct), H(ct), c P(ct) \right) \qquad \text{with} \qquad
  \tilde{e}_k = c^k e_{k}, \quad \tilde{h}_k =  c^k h_{k}, \quad \tilde{p}_k =  c^k p_{k}.
\end{equation}
 Similarly, $t \to t^m$ for a positive integer $m$ produces
\begin{equation*}
   (\hat{E},\hat{H},\hat{P})= \left( E((-1)^{m-1} t^m),  \  H(t^m),  \  m \cdot t^{m-1} P(t^m) \right)
\end{equation*}
with new coefficients that are $0$ unless $m$ divides the index, in which case
\begin{equation*}
  \hat{e}_{mk} =
  \begin{cases}
  (-1)^k e_{k} & \text{ if $m$ is even};\\
  e_{k} & \text{ if $m$ is odd, }
  \end{cases}
   \qquad \hat{h}_{mk} =   h_{k}, \qquad \hat{p}_{mk} =  m \cdot p_{k}.
\end{equation*}
This is reversed in the next easily verified result.

\begin{prop} \label{root}
Let $m$ be a positive integer and $(E,H,P)$  a symmetric triple. Suppose the coefficients $h_k$ are zero whenever $k$ is not a multiple of $m$. Then we can construct the new symmetric triple
\begin{equation*}
   (\hat{E},\hat{H},\hat{P})= \left( E((-1)^{(m-1)/m} t^{1/m}),  \  H(t^{1/m}),  \  \frac{t^{1/m-1}}m P(t^{1/m}) \right)
\end{equation*}
with  coefficients
\begin{equation*}
  \hat{e}_{k} =
  \begin{cases}
  (-1)^k e_{mk} & \text{ if $m$ is even};\\
  e_{mk} & \text{ if $m$ is odd, }
  \end{cases}
   \qquad \hat{h}_{k} =   h_{mk}, \qquad \hat{p}_{k} =  \frac{p_{mk}}m.
\end{equation*}
\end{prop}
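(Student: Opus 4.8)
The statement is the inverse of the substitution $t\to t^m$ recorded just before the proposition, so the plan is to exhibit the three displayed series as a genuine symmetric triple whose image under that substitution is $(E,H,P)$. Note that \e{macd3} cannot be invoked here, since $t^{1/m}$ is not a power series $\phi$ with $\phi(0)=0$; this is precisely why a separate result is needed. To keep everything inside $R[[t]]$, I would read the fractional-power notation purely as shorthand for the coefficient definitions $\hat h_k:=h_{mk}$, $\hat e_k:=(-1)^{(m-1)k}e_{mk}$ (which is the stated value) and $\hat p_k:=p_{mk}/m$, and then verify Definition \ref{mcd} directly for $\hat E(t):=\sum_k \hat e_k t^k$, $\hat H(t):=\sum_k\hat h_k t^k$ and $\hat P(t):=\sum_k \hat p_{k+1}t^k$.

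The first step is to propagate the vanishing hypothesis. Since $h_k=0$ for $m\nmid k$, the series $H$ is a power series in $t^m$; then $H(t)E(-t)=1$ from \e{tub} forces $E(-t)$, and hence $E$, to be a power series in $t^m$, while $P=H'/H$ likewise forces $P(t)$ to be $t^{m-1}$ times a power series in $t^m$. Thus $e_k=0$ and $p_k=0$ whenever $m\nmid k$ as well. With these vanishing patterns in hand, the coefficient definitions above are exactly equivalent to the three \emph{substitution identities}
\begin{equation*}
  \hat H(t^m)=H(t),\qquad \hat E\bigl((-1)^{m-1}t^m\bigr)=E(t),\qquad m\,t^{m-1}\hat P(t^m)=P(t),
\end{equation*}
now read as honest equalities in $R[[t]]$; in particular $\hat E$ and $\hat H$ have constant term $1$.

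The last step verifies the two relations of \e{pp}. For the $H$-relation I would differentiate $H(t)=\hat H(t^m)$ to get $H'(t)=m t^{m-1}\hat H'(t^m)$, substitute $H'=HP$ from \e{tub} together with the substitution identities, cancel the common factor $m t^{m-1}$, and conclude $\hat H'(t^m)=\hat H(t^m)\hat P(t^m)$; since both sides are power series in $t^m$, this yields $\hat H'=\hat H\,\hat P$, i.e.\ $\hat P=(d/dt)\log\hat H$. The $E$-relation is the same computation applied to $E(t)=\hat E((-1)^{m-1}t^m)$ using $E'=E\cdot P(-t)$ from \e{tub}; after rewriting $P(-t)$ via the third substitution identity and setting $w=(-1)^{m-1}t^m$ (so that $(-1)^m t^m=-w$), everything collapses to $\hat E'(w)=\hat E(w)\hat P(-w)$. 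The main obstacle is exactly this sign bookkeeping in the $E$-relation---keeping the factors $(-1)^{m-1}$ and $(-1)^m$ straight, and justifying that cancelling $t^{m-1}$ and then the substitution $w=(-1)^{m-1}t^m$ is legitimate formally (a power series that vanishes after substituting $(-1)^{m-1}t^m$ must itself be zero). Once both relations hold, $(\hat E,\hat H,\hat P)$ is a symmetric triple by Definition \ref{mcd} with the claimed coefficients; as a consistency check, Proposition \ref{tri} guarantees that the triple is already determined by $\hat H$ alone.
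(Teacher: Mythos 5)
Your proof is correct and complete. The paper offers no argument for Proposition~\ref{root} (it is introduced as ``easily verified''), and your direct verification --- propagating the vanishing of $e_k$ and $p_k$ for $m\nmid k$ via \eqref{tub}, recasting the fractional-power substitutions as coefficient definitions so everything stays in $R[[t]]$, and then checking both relations of \eqref{pp} with the $(-1)^{m-1}$ sign bookkeeping --- is precisely the verification the paper leaves to the reader.
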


Theorem 2.5 in \cite{Sun05} uses roots of unity to make pairs $(H,P)$ where $H(t)$ only contains powers that are multiples of $m$. We give another version of this result next. Recall that, as usual, $R$ is an integral domain containing $\Q$.

\begin{theorem}
Let $m$ be a positive integer and $(E,H,P)$  a symmetric triple with coefficients in $R$.  Then, for $\omega=e^{2\pi i/m}$, we can make the new symmetric triple
\begin{equation} \label{newtt}
   (\tilde{E},\tilde{H},\tilde{P})= \left( \prod_{j=0}^{m-1} E(\omega^j t),  \  \prod_{j=0}^{m-1} H(\omega^j t), \ \sum_{j=0}^{m-1} \omega^j P(\omega^j t)\right)
\end{equation}
 with coefficients in $R[\omega]$. These new coefficients $\tilde{e}_k$, $\tilde{h}_k$ and $\tilde{p}_k$ are all zero unless $m\mid k$ and in fact are all in $R$. We have $\tilde{p}_k = m \cdot p_k$ when $m\mid k$.
\end{theorem}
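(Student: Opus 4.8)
The plan is to realize $(\tilde E, \tilde H, \tilde P)$ as an $m$-fold product of elementary triples and then extract all four claims from the arithmetic of the $m$th roots of unity. First I would apply the change of variable $\phi(t) = \omega^j t$ in \e{macd3}: since $-\phi(-t) = \omega^j t$ and $\phi'(t) = \omega^j$, this shows that for each $j$ the triple $\left(E(\omega^j t), H(\omega^j t), \omega^j P(\omega^j t)\right)$ is a symmetric triple. Multiplying these $m$ triples together using \e{macd4} gives precisely $(\tilde E, \tilde H, \tilde P)$ as written in \e{newtt}, so it is a symmetric triple with nothing further to check. Moreover each factor $E(\omega^j t)$, $H(\omega^j t)$, $\omega^j P(\omega^j t)$ has coefficients in $R[\omega]$ because $e_k, h_k, p_k \in R$; products and sums preserve this, giving the $R[\omega]$ claim.

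Next I would handle $\tilde P$ by direct computation, which settles both its vanishing and its value at once. Expanding and interchanging the order of summation,
$$\tilde P(t) = \sum_{j=0}^{m-1}\omega^j \sum_{n\geq 0} p_{n+1}(\omega^j t)^n = \sum_{n\geq 0} p_{n+1}\, t^n \sum_{j=0}^{m-1}\omega^{j(n+1)},$$
and the standard roots-of-unity identity $\sum_{j=0}^{m-1}\omega^{j(n+1)} = m$ when $m\mid(n+1)$ and $0$ otherwise shows that $\tilde p_k = 0$ for $m\nmid k$ while $\tilde p_k = m\,p_k$ when $m\mid k$. In particular every $\tilde p_k$ lies in $R$. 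For the vanishing of $\tilde e_k$ and $\tilde h_k$ I would instead use the invariance $\tilde E(\omega t) = \tilde E(t)$, which holds because sending $t\mapsto\omega t$ cyclically permutes the factors $E(\omega^j t)$ (here $\omega^m=1$). Comparing coefficients of $t^k$ in $\tilde E(\omega t)=\tilde E(t)$ forces $\tilde e_k\,(\omega^k-1)=0$, and since $\omega^k\neq 1$ for $m\nmid k$ this gives $\tilde e_k=0$; the same argument applied to $\tilde H$ handles $\tilde h_k$.

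The one substantive point, and the step I expect to be the only real obstacle, is promoting $\tilde e_k$ and $\tilde h_k$ from $R[\omega]$ to $R$. The key is the standing hypothesis $R\supseteq\Q$. Because $(\tilde E,\tilde H,\tilde P)$ is a genuine symmetric triple, the transition formulas of Theorem \ref{tran} apply to it; formula \e{ehp2} writes $\tilde e_r$ as a $\Q$-linear combination (the $1/k!$ weights together with the integer-coefficient De Moivre polynomials) of monomials in the quantities $\tilde p_\ell/\ell$. Since each $\tilde p_\ell\in R$ and $R$ contains $\Q$, every such $\tilde p_\ell/\ell$ and hence the whole expression lies in $R$, so $\tilde e_r\in R$; formula \e{ehp3} gives $\tilde h_r\in R$ in the same way. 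I would prefer this route over the alternative Galois argument — that $\omega\mapsto\omega^a$ with $\gcd(a,m)=1$ merely permutes the factors of $\tilde E$ and $\tilde H$ and so fixes their coefficients — because the latter becomes delicate when $R$ already contains roots of unity, whereas the transition-formula argument needs no hypothesis on $R$ beyond $R\supseteq\Q$.
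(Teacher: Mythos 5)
Your proposal is correct and follows essentially the same route as the paper: build the triple from the scaled triples $\left(E(\omega^j t), H(\omega^j t), \omega^j P(\omega^j t)\right)$ via the group operation \e{macd4}, use the invariance $\tilde E(\omega t)=\tilde E(t)$ for the vanishing of coefficients off the multiples of $m$, compute $\tilde p_k = m\, p_k$ by the roots-of-unity sum, and then invoke the transition formulas (with $R\supseteq\Q$) to pull $\tilde e_k$ and $\tilde h_k$ back into $R$. The only cosmetic difference is that you derive the vanishing of $\tilde p_k$ directly from the roots-of-unity sum rather than from the invariance argument; the substance is identical.
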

\begin{proof}
Use \e{ccc} and the group operation \e{macd4} to check  that \e{newtt} is a symmetric triple. Since $\tilde{E}(\omega t) = \tilde{E}(t)$, it follows that $\tilde{e}_k \omega^k = \tilde{e}_k$ and hence that $\tilde{e}_k$ must be $0$ unless $m | k$. Similarly for $\tilde{h}_k$ and $\tilde{p}_k$. Also
\begin{equation*}
  \sum_{k=1}^\infty \tilde{p}_k t^{k-1} = \sum_{j=0}^{m-1} \omega^j \sum_{r=1}^\infty p_r (\omega^j t)^{r-1}
  =  \sum_{r=1}^\infty p_r t^{r-1} \sum_{j=0}^{m-1} \omega^{r j}
\end{equation*}
and hence $\tilde{p}_k = m \cdot p_k$ when $m\mid k$. In particular, $\tilde{p}_k \in R$ and the transition formulas now show that $\tilde{e}_k$ and $\tilde{h}_k$ are also in $R$.
\end{proof}

\section{Symmetric triple examples}
A simple way to build a symmetric triple is to start with any formal power series that has constant term $1$. Letting this be $H(t)$, we produce
\begin{equation}\label{hhh}
  \left(\frac{1}{H(-t)}, \ H(t), \ \frac{H'(t)}{H(t)}\right),
\end{equation}
and  this lets us study the reciprocal and the derivative of $H(t)$. We will also include  compositional inverses in section \ref{cinv}.

\subsection{Examples with equal coefficients}

\begin{prop}
We have $e_k=h_k$ for all $k\gqs 1$ if and only if $P(t)$ is even.
\end{prop}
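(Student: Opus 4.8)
The plan is to reduce the stated coefficient condition to a single identity between the two series $E$ and $H$, and then to read the equivalence straight off the defining relations \e{pp}. The key preliminary observation is that $E(t)$ and $H(t)$ both have constant term $1$ by the definition of a symmetric triple, so $e_0=h_0=1$ automatically. Consequently the hypothesis ``$e_k=h_k$ for all $k\gqs 1$'' is equivalent to the single statement $E(t)=H(t)$ in $R[[t]]$. This is what makes the argument short, and I would state it first.

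For the forward direction I would assume $E(t)=H(t)$. The two halves of \e{pp} then become $P(-t)=\frac{d}{dt}\log E(t)$ and $P(t)=\frac{d}{dt}\log H(t)=\frac{d}{dt}\log E(t)$, so $P(-t)=P(t)$; that is, $P$ is even. For the converse I would assume $P$ is even, so $P(t)=P(-t)$, and feed this into \e{pp} to obtain $\frac{d}{dt}\log H(t)=P(t)=P(-t)=\frac{d}{dt}\log E(t)$. Since $E$ and $H$ have constant term $1$, the formal series $\log E(t)$ and $\log H(t)$ are well defined and have equal derivatives, hence differ by a constant; evaluating at $t=0$ and using $\log E(0)=\log H(0)=0$ forces that constant to be $0$. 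Exponentiating $\log E(t)=\log H(t)$ gives $E(t)=H(t)$, i.e.\ $e_k=h_k$ for all $k$.

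The nearest thing to an obstacle is purely the formal-power-series bookkeeping in the converse: one must check that passing from an equality of derivatives to an equality of antiderivatives is legitimate over $R[[t]]$ (it is, since $R\supseteq\Q$ and term-by-term integration makes sense) and that the constant of integration is pinned down by the common value $\log E(0)=\log H(0)=0$. Everything else follows immediately from Definition \ref{mcd}, so no appeal to the transition formulas or to Proposition \ref{tri} is actually needed.
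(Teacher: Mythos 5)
Your proof is correct and follows essentially the same route as the paper: both reduce the coefficient condition to $E(t)=H(t)$ via $e_0=h_0=1$, pass to logarithms, and use \e{pp} to translate equality of logarithmic derivatives into $P(-t)=P(t)$, with the constant of integration fixed by the constant terms. You simply spell out the two directions and the formal-power-series bookkeeping a bit more explicitly than the paper's chain of equivalences.
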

\begin{proof}
Recall that $e_0=h_0=1$. Then
\begin{align*}
  E(t)=H(t) & \iff \log E(t) = \log H(t) \\
  & \iff \frac{d}{dt}\log E(t) = \frac{d}{dt}\log H(t) \iff P(-t)=P(t). \qedhere
\end{align*}
\end{proof}

\begin{prop} \label{h=p}
Suppose $h_k=p_k$ for all $k\gqs 1$. Then for some constant $c$,
\begin{equation*}
  (E(t),H(t),P(t))=\left(1+ct,\ \frac{1}{1-ct}, \ \frac{c}{1-ct} \right).
\end{equation*}
\end{prop}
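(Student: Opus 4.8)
The plan is to turn the hypothesis into a single first-order differential equation for $H(t)$ and then solve it inside $R[[t]]$. By the indexing convention in \e{ehpser} we have $P(t)=\sum_{k\gqs 1} p_k t^{k-1}$, so the assumption $p_k=h_k$ for all $k\gqs 1$, combined with $h_0=1$, gives the clean identity
$$
P(t)=\sum_{k\gqs 1} h_k t^{k-1}=\frac{H(t)-1}{t}.
$$
Feeding this into the defining relation $H(t)P(t)=H'(t)$ from \e{tub} produces the formal ODE $tH'(t)=H(t)^2-H(t)$, which I then solve with the initial data $H(0)=1$.

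Next I would extract coefficients. Writing $H(t)=1+\sum_{k\gqs 1}h_k t^k$ and comparing the coefficient of $t^n$ on both sides of $tH'(t)=H(t)^2-H(t)$ yields the recurrence
$$
(n-1)h_n=\sum_{k=1}^{n-1}h_k h_{n-k} \qquad (n\gqs 1).
$$
For $n=1$ the sum is empty and the coefficient $n-1$ vanishes, so $h_1$ is unconstrained; I set $c:=h_1$. A short induction then gives $h_n=c^n$ for all $n\gqs 0$: assuming $h_k=c^k$ for $k<n$, the right-hand side collapses to $\sum_{k=1}^{n-1}c^k c^{n-k}=(n-1)c^n$, forcing $h_n=c^n$ for $n\gqs 2$. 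Hence $H(t)=\sum_{n\gqs 0}c^n t^n=1/(1-ct)$.

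Finally I would recover the remaining two series from $H(t)$. The relation $H(t)E(-t)=1$ in \e{tub} gives $E(-t)=1-ct$, hence $E(t)=1+ct$, while $P(t)=(H(t)-1)/t=c/(1-ct)$, matching the claimed triple. I do not expect a genuine obstacle here, as the argument is a routine formal-power-series computation; the only point needing care is the $n=1$ case of the recurrence, where the vanishing factor $n-1$ is exactly what allows the free constant $c=h_1=p_1$ to enter and parametrize the family.
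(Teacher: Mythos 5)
Your proof is correct, but it takes a different route from the paper's. Both arguments begin from the same observation that $h_k=p_k$ for $k\gqs 1$ together with $h_0=1$ is equivalent to $H(t)=1+tP(t)$. From there the paper transfers everything to $E$: using $H(t)=1/E(-t)$ and $P(t)=E'(-t)/E(-t)$ from \e{tub}, \e{tub2}, the relation becomes $1=E(t)-tE'(t)$, which is \emph{linear} in $E$; one differentiation gives $tE''(t)=0$, hence $E(t)=1+ct$, and the rest of the triple follows. You instead stay with $H$, where the same relation combined with $H'=HP$ becomes the quadratic equation $tH'(t)=H(t)^2-H(t)$, and you solve it by the coefficient recurrence $(n-1)h_n=\sum_{k=1}^{n-1}h_kh_{n-k}$ and induction. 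Your version is more computational but makes completely explicit where the free parameter enters (the vanishing factor $n-1$ at $n=1$ leaves $c=h_1$ undetermined, exactly parametrizing the family); the paper's version buys brevity by exploiting the fact that the constraint linearizes in the $E$-coordinate. Both are valid formal-power-series arguments, and your recurrence, induction, and the final recovery of $E$ and $P$ all check out.
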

\begin{proof}
We have
\begin{equation*}
  H(t)=1+t P(t) \iff \frac 1{E(-t)}=1+ t \frac{E'(-t)}{E(-t)} \iff 1=E(t)-t E'(t).
\end{equation*}
Differentiating shows $E''(t)=0$ and hence $E(t)=1+c t$.
\end{proof}

There is a similar result if $h_k=\alpha \cdot p_k$ for all $k\gqs 1$ and any fixed $\alpha$.
The next case is stated in \cite[Ex. 16]{Macd}. The Bernoulli numbers are defined with $B_k:=k![t^k]t/(e^t-1)$.

\begin{prop}
Suppose $e_k=p_k$ for all $k\gqs 1$. Then for some constant $c$,
\begin{equation*}
  (E(t),H(t),P(t))=\left(\frac{-ct}{e^{-ct}-1},  \  \frac{e^{ct}-1}{ct},  \   \frac{1}{t}\left( -1+\frac{-ct}{e^{-ct}-1}\right) \right),
\end{equation*}
with
\begin{equation} \label{eqq}
  e_k = (-1)^k \frac{c^k B_k}{k!}, \qquad h_k= \frac{c^k}{(k+1)!}, \qquad p_k = (-1)^k \frac{c^k B_k}{k!}.
\end{equation}
\end{prop}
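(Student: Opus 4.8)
The plan is to convert the hypothesis into a differential equation for $E(t)$ and solve it. First, comparing coefficients shows that $e_k = p_k$ for all $k\gqs 1$ is equivalent to the single identity $E(t) = 1 + tP(t)$, since $E(t)-1 = \sum_{k\gqs 1} e_k t^k$ while $tP(t) = \sum_{k\gqs 1} p_k t^k$. This identity gives $P(-t) = (1-E(-t))/t$, and substituting it into the triple relation $E'(t) = E(t)P(-t)$ from \e{tub} yields the functional-differential equation
\begin{equation*}
  tE'(t) = E(t)\bigl(1 - E(-t)\bigr), \qquad\text{equivalently}\qquad E(t)E(-t) = E(t) - tE'(t).
\end{equation*}

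The main obstacle is that this equation couples $E(t)$ with $E(-t)$, so it is not an ordinary differential equation. The key step is to replace $t$ by $-t$, producing the companion relation $E(t)E(-t) = E(-t) + tE'(-t)$, and then subtract. The product $E(t)E(-t)$ cancels, and setting $D(t) := E(t)-E(-t)$ and noting $\tfrac{d}{dt}D(t) = E'(t)+E'(-t)$, the difference collapses to $D(t) = tD'(t)$. Hence $D(t) = ct$ for some constant $c$ (necessarily $c = 2e_1 = 2p_1$); equivalently every $e_k$ with odd $k\gqs 3$ vanishes, which already matches the vanishing of the odd Bernoulli numbers. Thus $E(-t) = E(t)-ct$.

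Substituting $E(-t) = E(t)-ct$ back into $E(t)E(-t) = E(t)-tE'(t)$ eliminates the functional dependence and leaves the Bernoulli-type ordinary differential equation $tE'(t) = E(t)(1+ct) - E(t)^2$. Since $E(0)=1$, the reciprocal $w := 1/E$ is a genuine power series, and it satisfies the linear equation $tw'(t) + (1+ct)w(t) = 1$. Solving with the integrating factor $te^{ct}$ gives $w(t) = \tfrac{1}{ct} + Ke^{-ct}/t$, and requiring $w$ to be regular at $t=0$ forces $K = -1/c$ (equivalently, the coefficient recursion $(n+1)w_n = -cw_{n-1}$ with $w_0=1$ determines $w$ uniquely), so that $w(t) = (1-e^{-ct})/(ct)$ and
\begin{equation*}
  E(t) = \frac{ct}{1 - e^{-ct}} = \frac{-ct}{e^{-ct}-1}.
\end{equation*}

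Finally, $H(t) = 1/E(-t) = (e^{ct}-1)/(ct)$ from \e{tub} and $P(t) = (E(t)-1)/t$ from Step~1 follow immediately, matching the stated closed forms. Expanding these via the Bernoulli generating function $t/(e^t-1) = \sum_{k\gqs 0}(B_k/k!)\,t^k$ — using $u=-ct$ in $E(t)$ and $u=ct$ in $H(t)$ — produces the coefficients $e_k$, $h_k$, $p_k$ recorded in \e{eqq}. The only genuinely novel ingredient is the subtraction trick of the second paragraph; everything else is routine once the problem is reduced to an ordinary differential equation.
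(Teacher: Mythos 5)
Your proof is correct, but it follows a genuinely different route from the paper's. The paper rewrites the hypothesis $E(t)=1+tP(t)$ entirely in terms of $H$, sets $F(t):=tH(t)$, and derives the functional equation $F(-t)F'(t)=-F(t)$; it then exploits the multiplicative symmetry $F'(-t)=1/F'(t)$ and differentiates twice to conclude $\frac{d^2}{dt^2}\log F'(t)=0$, whence $F'(t)=e^{ct}$ and the triple follows from $F(0)=0$, $F'(0)=1$. You instead stay with $E$ and the equation $E(t)E(-t)=E(t)-tE'(t)$, and your key move is additive rather than multiplicative: subtracting the $t\mapsto -t$ companion kills the coupled product and pins down the odd part of $E$ as exactly $ct$, after which the problem collapses to the first-order linear equation $tw'+(1+ct)w=1$ for $w=1/E$. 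Your version has two small advantages: the coefficient recursion $(n+1)w_n=-cw_{n-1}$ makes the argument manifestly valid in the formal power series ring (no integration constants to justify) and handles $c=0$ without taking limits, whereas the integrating-factor computation alone would divide by $c$. The paper's version is shorter once the symmetry $F'(-t)=1/F'(t)$ is observed, but both are legitimate; your identification of the subtraction step as the one non-routine idea is accurate.
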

\begin{proof}
Expressing $E(t)=1+t P(t)$ in terms of $H$ implies $H(-t) \frac{d}{dt}(t H(t))=H(t)$. Set $F(t):=t H(t)$ and we find
\begin{equation}\label{ff}
  F(-t) F'(t)=-F(t).
\end{equation}
This implies the symmetry
\begin{equation}\label{ff2}
  F'(-t) = -\frac{F(-t)}{F(t)} = \frac{1}{F'(t)}.
\end{equation}
Differentiating \e{ff} and simplifying with \e{ff2} shows
\begin{equation*}
  1+F(t) F''(t)/F'(t) = -F'(t).
\end{equation*}
Differentiating this once more finds $\frac{d}{dt}(F''(t)/F'(t))=0$. Therefore
\begin{equation*}
  \frac{d^2}{dt^2} \log F'(t) = 0 \implies \log F'(t) = ct+b  \implies  F'(t) = \exp(ct+b)
\end{equation*}
for some constants $c$ and $b$. But $F'(t)=H(t)+t H'(t)$, making $F'(0)=1$ and so $b=0$. Next, for some $a$,
\begin{equation*}
  F(t) = \exp(ct)/c+a.
\end{equation*}
Finally, $F(0)=0$ means $a=-1/c$ and the result follows.
\end{proof}

\subsection{Stirling and harmonic numbers}

\begin{eg}[Stirling numbers, power sums] \label{xstir} For  $n \gqs 1$,
\begin{equation}\label{xj2}
  e_k = \stira{n+1}{n+1-k}, \qquad h_k = \stirb{n+k}{n},  \qquad p_k = 1^k+2^k+\cdots +n^k. 
\end{equation}
\end{eg}
\begin{proof}[Discussion]
This example comes from setting $x_j=j$ in \e{genf}, \e{genf2} to get  the generating functions
\begin{equation}\label{xj}
  E(t)=\prod_{j=1}^n (1+jt), \qquad H(t)=\prod_{j=1}^n \frac 1{1-jt}, \qquad P(t)=\sum_{j=1}^n \frac j{1-jt}.
\end{equation}
Their  coefficients in \e{xj2} follow from \e{rise} with $\alpha=1/t$ and \e{difi2}. If $k\gqs n+1$ then $e_k$ must be $0$ and we may extend the definition of $\stira{n}{k}$ to be $0$ when $k\lqs 0$.
We now have many identities connecting these numbers. For example $P(t)=E'(-t)H(t)$ from \e{tub2} implies
\begin{equation} \label{peh}
  p_k=\sum_{j=1}^k (-1)^{j-1} j e_j h_{k-j}
\end{equation}
so that, for $n,k \gqs 1$,
\begin{equation*}
  1^k+2^k+\cdots +n^k = \sum_{j=1}^k (-1)^{j-1} j \stira{n+1}{n+1-j} \stirb{n+k-j}{n},
\end{equation*}
as in \cite[Prop. 3.17]{Egg}, (including the missing $j$ factor).
The transition formula $p_k \leftrightarrow h_k$ also shows that
\begin{equation*}
  1^k+2^k+\cdots +n^k = \sum_{j=1}^{k} (-1)^{j-1}\frac{k}{j} \dm_{k,j}\left(\stirb{n+1}{n}, \stirb{n+2}{n}, \cdots \right).
\end{equation*}
By \e{ones} we can see
\begin{equation*}
  \stira{n+1}{n} =  \stirb{n+1}{n} = \binom{n+1}{2}.
\end{equation*}
 In 1796 Kramp found formulas for the symmetric polynomials $e_k$ and $h_k$ in \e{xj2}. This is discussed by Knuth \cite[p. 413]{K2}, and in our notation they give, for $m,k\gqs 0$,
\begin{align}\label{kr}
  \stirb{m+k}{k} & =  m! \binom{m+k}{m} \sum_{j=0}^m \binom{k}{j} \dm_{m,j}\left(\frac{1}{2!},\frac{1}{3!},\frac{1}{4!}, \dots \right), \\
  \stira{m+k}{k} & = m! \binom{m+k}{m} \sum_{j=0}^m \binom{k}{j} \dm_{m,j}\left(\frac{1}{2}, \frac{1}{3}, \frac{1}{4}, \dots \right). \label{kr2}
\end{align}
We derive them after \e{nut}. Recall the simpler identities \e{bakb}, \e{baka}.
\end{proof}

In general, as described in \cite{Kon}, there is a natural combinatorial interpretation of $e_k$ and $h_k$ when  $x_1, x_2, \dots, x_n$ are nonnegative integers. We have this situation in \e{nck} with $x_j=1$, Example \ref{xstir} with $x_j=j$ and Example \ref{qbc} with $x_j=q^{j-1}$ if $q\in \Z_{\gqs 2}$. Suppose there are $n$ boxes with $x_j$ different balls in the  box with index $j$. Then $e_k=e_k(x_1,\dots,x_n)$ is the number of ways to choose $k$ balls from these boxes, where we mean one ball from each of $k$ different boxes and the order of the boxes is unimportant. The number $h_k=h_k(x_1,\dots,x_n)$ gives a similar count, but the boxes do not have to be different. We may extend this to $p_k=p_k(x_1,\dots,x_n)$, giving the number of ways to choose $k$  balls, with replacement, from a single box.
 Also in this setup, $\dm_{n,k}(x_1,x_2, \dots)$ counts the number of ways to choose one  ball from each of $k$ (not necessarily different) boxes with indices summing to $n$, and where the order of the boxes is important.

\begin{prop}[The generalized Pascal identities]
For $n,k \gqs 2$ and arbitrary $x_1, \dots, x_n$,
\begin{align}
  e_{k}(x_1,\dots,x_{n}) & = e_{k}(x_1,\dots,x_{n-1}) + x_n \cdot e_{k-1}(x_1,\dots,x_{n-1}), \label{epas}\\
  h_{k}(x_1,\dots,x_{n}) & = h_{k}(x_1,\dots,x_{n-1}) + x_n \cdot h_{k-1}(x_1,\dots,x_{n}), \label{hpas}\\
  p_{k}(x_1,\dots,x_{n}) & = p_{k}(x_1,\dots,x_{n-1}) + x_n \cdot p_{k-1}(0,\dots,0,x_{n}). \label{ppas}
\end{align}
\end{prop}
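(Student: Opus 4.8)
The plan is to prove each identity directly from the combinatorial definitions \eqref{elem}, \eqref{pow}, \eqref{hom} by splitting the monomials according to whether the last variable $x_n$ participates, which is exactly the idea behind the ordinary Pascal identity for binomial coefficients. In each case the symmetric polynomial is a sum of monomials, and I would partition that sum into the terms not involving $x_n$ and the terms that do. The terms free of $x_n$ are precisely the corresponding symmetric polynomial in the first $n-1$ variables, giving the first summand on the right; the remaining terms, after factoring out one copy of $x_n$, should reproduce $x_n$ times a symmetric polynomial of degree $k-1$ in the appropriate variable set.

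For \eqref{epas} I would argue as follows. In $e_k(x_1,\dots,x_n)=\sum_{1\lqs j_1<\cdots<j_k\lqs n} x_{j_1}\cdots x_{j_k}$, a monomial either omits the index $n$ or has $j_k=n$. The omitting monomials sum to $e_k(x_1,\dots,x_{n-1})$. Those with $j_k=n$ factor as $x_n$ times a product over $k-1$ strictly increasing indices in $\{1,\dots,n-1\}$, i.e.\ $x_n\cdot e_{k-1}(x_1,\dots,x_{n-1})$. For \eqref{hpas} the same dichotomy applies to $h_k(x_1,\dots,x_n)=\sum_{1\lqs j_1\lqs\cdots\lqs j_k\lqs n} x_{j_1}\cdots x_{j_k}$, but the key difference is that weakly increasing indices allow $x_n$ to repeat. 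A monomial either uses no factor equal to $n$, contributing $h_k(x_1,\dots,x_{n-1})$, or its largest index is $n$; stripping off one factor $x_n$ leaves a weakly increasing selection of $k-1$ indices from the full set $\{1,\dots,n\}$ (since further copies of $x_n$ are still permitted), giving $x_n\cdot h_{k-1}(x_1,\dots,x_n)$. This explains why the recursion for $h$ keeps all $n$ variables in the second term while $e$ drops to $n-1$.

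The power-sum identity \eqref{ppas} is the most transparent: $p_k(x_1,\dots,x_n)=x_1^k+\cdots+x_n^k$ splits trivially as $p_k(x_1,\dots,x_{n-1})+x_n^k$, and the final term is $x_n\cdot x_n^{k-1}=x_n\cdot p_{k-1}(0,\dots,0,x_n)$ since $p_{k-1}$ evaluated at a variable list that is zero except in the last slot equals $x_n^{k-1}$. I would note that the degenerate argument lists are simply a uniform way to write $x_n^{k-1}$ and to match the format of the other two lines.

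I do not anticipate a genuine obstacle here, since all three are immediate consequences of sorting monomials by their dependence on $x_n$; the only point requiring care is the bookkeeping in \eqref{hpas}, where one must justify that removing a single factor $x_n$ from a monomial whose maximal index is $n$ returns a legitimate weakly increasing multiset over all $n$ variables (not just the first $n-1$), which is precisely why the right-hand side of \eqref{hpas} retains $h_{k-1}(x_1,\dots,x_n)$. I would present the three cases in parallel, emphasizing the contrast between the strict inequalities defining $e_k$, the weak inequalities defining $h_k$, and the single-index sum defining $p_k$, as this contrast is exactly what produces the three slightly different right-hand sides.
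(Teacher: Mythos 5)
Your proof is correct, but it follows a different route from the paper. The paper dispatches \eqref{epas} and \eqref{hpas} in one line from the generating functions \eqref{genf}: writing $E_n(t)=\prod_{j=1}^n(1+x_jt)$ and $H_n(t)=\prod_{j=1}^n(1-x_jt)^{-1}$, one has $E_n(t)=(1+x_nt)E_{n-1}(t)$ and $(1-x_nt)H_n(t)=H_{n-1}(t)$, i.e.\ $H_n(t)=H_{n-1}(t)+x_ntH_n(t)$, and comparing coefficients of $t^k$ gives the two identities at once (note how the asymmetry you highlight --- $e_{k-1}$ in $n-1$ variables versus $h_{k-1}$ in all $n$ variables --- falls out of which side of the equation the factor $(1\pm x_nt)$ sits on). Your argument instead partitions the monomials of \eqref{elem} and \eqref{hom} according to whether the largest index equals $n$, and your key observation for \eqref{hpas} --- that deleting one factor $x_n$ from a weakly increasing selection whose top index is $n$ leaves a legitimate weakly increasing selection over the \emph{full} index set $\{1,\dots,n\}$, and that this is a bijection with such selections of length $k-1$ --- is exactly right and is the point where care is needed. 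What the generating-function proof buys is brevity and uniformity (and it is the natural argument in a paper organized around $E$, $H$, $P$ as series); what your combinatorial proof buys is a bijective explanation of precisely why the three right-hand sides differ, which the algebraic manipulation leaves implicit. Both treatments of \eqref{ppas} are the same trivial splitting.
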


Equation \e{ppas} is clear and \e{epas}, \e{hpas} are easily demonstrated using the generating functions \e{genf}.
In the case of the Stirling numbers in Example \ref{xstir}, \e{epas} and \e{hpas} show that, for $n\gqs 3$, $k\gqs 2$,
\begin{equation}\label{s2}
  \stira{n+1}{k}=n \stira{n}{k} + \stira{n}{k-1}, \qquad  \stirb{n+1}{k}=k \stira{n}{k} + \stira{n}{k-1}.
\end{equation}
These relations allow the Stirling numbers to be extended to all integers $n$ and $k$. In fact
\e{s2} combined with the initial conditions $\stira{n}{0} =\stirb{n}{0} =\delta_{n,0}$  and $\stira{0}{k} =\stirb{0}{k} =\delta_{k,0}$ gives an elegant alternate definition for them. The remarkable duality $\stira{n}{k} = \stirb{-k}{-n}$ can also be observed; see \cite[pp. 266, 267]{Knu} and \cite{K2}.

\begin{eg}[Harmonic numbers] \label{onic} For  $n \gqs 1$,
\begin{equation}\label{xhar}
  e_k = \stira{n+1}{k+1}/n!, \qquad h_k = \stirc{n}{k},  \qquad p_k = H_n^{(k)}.
\end{equation}
\end{eg}

\begin{proof}[Discussion]
 Recall the definition \e{hnk} for $H_n^{(k)}$.  Here we are setting $x_j=1/j$ for $1\lqs j\lqs n$. The new notation $\stirc{n}{k}$ is meant to suggest  multiset coefficients, as in $\left( \binom{n}{k}\right):=\binom{n+k-1}{k}$, but of an unusual harmonic kind. By analogy with the above discussion, $\stirc{n}{k}$ represents the number of ways to choose $k$ balls, with replacement, from $n$ boxes where the $j$th box contains $1/j$ balls. The generating functions are
\begin{equation}\label{xha}
  E(t)=\frac{1}{n!}\prod_{j=1}^n (j+t), \qquad H(t)=n! \prod_{j=1}^n \frac 1{j-t}, \qquad P(t)=\sum_{j=1}^n \frac 1{j-t},
\end{equation}
with the formula for $e_k$ in \e{xhar} coming from \e{rise}.

\begin{prop} \label{flax}
We have
\begin{equation}\label{new}
  \stirc{n}{k} = \sum_{j=1}^n (-1)^{j-1} \binom{n}{j}  \frac{1}{j^k} \qquad (n,k \in \Z_{\gqs 1}).
\end{equation}
\end{prop}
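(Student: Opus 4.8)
The plan is to read off $\stirc{n}{k}=h_k=[t^k]H(t)$ directly from the product formula for $H(t)$ in \e{xha} by decomposing it into partial fractions. Since $H(t)=n!\prod_{j=1}^n 1/(j-t)$ has only simple poles, located at $t=1,2,\dots,n$, and is regular at $t=0$, its expansion as a formal power series is governed entirely by these poles and can be obtained termwise from the geometric series attached to each of them.

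First I would write the partial fraction decomposition
\[
  \prod_{j=1}^n \frac{1}{j-t} = \sum_{i=1}^n \frac{A_i}{i-t}, \qquad A_i = \prod_{\substack{1\lqs j \lqs n\\ j\ne i}} \frac{1}{j-i}.
\]
To evaluate $A_i$ I would split the product over $j<i$ and $j>i$: the factors with $j<i$ contribute $(-1)^{i-1}(i-1)!$ to the denominator and those with $j>i$ contribute $(n-i)!$, giving $A_i=(-1)^{i-1}/\bigl((i-1)!\,(n-i)!\bigr)$. Multiplying by $n!$ and simplifying with $n!/\bigl((i-1)!\,(n-i)!\bigr)=i\binom{n}{i}$ then produces
\[
  H(t)=\sum_{i=1}^n (-1)^{i-1}\, i\binom{n}{i}\,\frac{1}{i-t}.
\]

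To finish, I would expand each simple pole as a geometric series, $1/(i-t)=\sum_{k=0}^\infty t^k/i^{k+1}$, interchange the two sums (legitimate since the outer sum over $i$ is finite), and compare coefficients of $t^k$. The extra factor $i$ cancels one power of $i$ in the denominator, leaving exactly $\sum_{i=1}^n(-1)^{i-1}\binom{n}{i}/i^{k}$, which is the asserted value of $\stirc{n}{k}$; as a sanity check, the $k=0$ coefficient collapses to $\sum_{i=1}^n(-1)^{i-1}\binom{n}{i}=1=h_0$. The only place calling for any care is the sign and factorial bookkeeping in the residue $A_i$, and everything else is a routine termwise expansion, so I do not anticipate a genuine obstacle.
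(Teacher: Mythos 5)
Your proof is correct and is essentially the paper's argument: both rest on the partial fraction decomposition of the product form of $H(t)$ from \e{xha} followed by a termwise geometric series expansion in $t$. The only cosmetic difference is that you compute the residues $A_i$ directly, whereas the paper obtains the same decomposition (applied to $H(t)/(-t)$) by quoting the finite-difference identity \e{difi}, which is itself just that partial fraction expansion in disguise.
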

\begin{proof}
By \e{difi},
\begin{equation*}
  \frac{H(t)}{-t} =\sum_{j=0}^n \binom{n}{j} \frac{(-1)^j}{j-t}
  =-\frac{1}{t}+ \sum_{j=1}^n \frac{(-1)^j}j \binom{n}{j} \sum_{r=0}^\infty \frac{t^r}{j^r},
\end{equation*}
and rearranging and comparing powers of $t$  gives \e{new}.
\end{proof}

See \cite[Thm. 2.1]{Bat} for another proof and a discussion of the history of this result. By \e{hpas}, for $n,k \gqs 2$, we have (after multiplying through by $n$) the Pascal identity
\begin{equation}\label{harm2}
 n\stirc{n}{k}=n\stirc{n-1}{k}+  \stirc{n}{k-1}.
\end{equation}

\begin{theorem} \label{you}
Suppose we define the harmonic multiset numbers $\stirc{n}{k}$ using the recursion \e{harm2} and the initial conditions
\begin{equation}\label{hc}
  \stirc{n}{-1}=\delta_{n,1}, \qquad \stirc{-1}{k}=\delta_{k,1}.
\end{equation}
Then $\stirc{n}{k}$ is well-defined for all $n,k \in \Z$ and agrees with our previous definition. The Stirling numbers are special cases:
\begin{equation}\label{hc2}
  \stirb{n}{k}=\frac{(-1)^{k+1}}{k!} \stirc{k}{-n}, \qquad \stira{n}{k}= (n-1)! \stirc{-n}{k} \qquad (n \gqs 1, k \gqs 0).
\end{equation}
\end{theorem}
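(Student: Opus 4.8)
The plan is to read the recursion \e{harm2} as a rule that, together with the two boundary lines in \e{hc}, propagates a unique value to every lattice point, and then to identify the resulting array with explicit formulas on each side of the degenerate line $n=0$. I would organize everything around the fact that the coefficient $n$ in \e{harm2} vanishes only at $n=0$; this both forces $\stirc{0}{k}=0$ for all $k$ and decouples the half-planes $n\geq1$ and $n\leq-1$.

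For uniqueness I would take two arrays satisfying \e{harm2} and \e{hc} and let $D_{n,k}$ be their difference, so $D$ obeys the homogeneous relation $nD_{n,k}=nD_{n-1,k}+D_{n,k-1}$ with $D_{n,-1}=D_{-1,k}=0$. The relation at $n=0$ reads $D_{0,k-1}=0$, giving $D_{0,k}\equiv0$. For the upper half I would induct on $n\geq1$: assuming $D_{n-1,\cdot}\equiv0$, the relation collapses to $D_{n,k}=\frac1n D_{n,k-1}$, which with the anchor $D_{n,-1}=0$ forces $D_{n,\cdot}\equiv0$ in both directions of $k$. For the lower half I would induct on decreasing $n$ from $D_{-1,\cdot}\equiv0$, using $D_{n-1,k}=D_{n,k}-\frac1n D_{n,k-1}$ to pass from row $n$ to row $n-1$. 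Thus \e{harm2} and \e{hc} determine at most one array, the upper half being pinned by the $k=-1$ line and the lower half by the $n=-1$ line.

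For existence and for agreement with the earlier definition I would exhibit a solution explicitly. On $n\geq0$ I set $\stirc{n}{k}=\sum_{j=1}^n(-1)^{j-1}\binom nj j^{-k}$, the formula of Proposition \ref{flax} now read for every $k\in\Z$; the coefficient identity $n\binom nj=(n-j)\binom nj+j\binom nj$ together with $(n-j)\binom nj=n\binom{n-1}{j}$ shows it satisfies \e{harm2} for $n\geq1$, and the evaluation $\sum_{j=1}^n(-1)^{j-1}\binom nj j=\delta_{n,1}$ shows it matches the $k=-1$ line. On $n\leq-1$ I define the entries by running \e{harm2} downward in $n$ from the line $\stirc{-1}{k}=\delta_{k,1}$; since the only relation coupling the two halves is the one at $n=0$, which merely restates $\stirc{0}{k-1}=0$, the two pieces fit together into a genuine solution. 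By uniqueness this is the array, so it is well-defined and agrees with Example \ref{onic} for $n,k\geq1$.

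Finally, for the Stirling identities \e{hc2}: for the subset numbers I would use the upper-half formula $\stirc{k}{-n}=\sum_{j=1}^k(-1)^{j-1}\binom kj j^{n}$, which (once the vanishing $j=0$ term is accounted for) is exactly $(-1)^{k+1}k!$ times the right side of \e{ssub}, yielding $\stirb{n}{k}=\frac{(-1)^{k+1}}{k!}\stirc{k}{-n}$; the case $k=0$ is immediate from $\stirc{0}{\cdot}\equiv0$. For the cycle numbers I would check that substituting $\stirc{-n}{k}=\frac1{(n-1)!}\stira{n}{k}$ turns \e{harm2} into the recursion $\stira{n+1}{k}=n\stira{n}{k}+\stira{n}{k-1}$ of \e{s2}, and that it holds at the base $n=1$ since $\stirc{-1}{k}=\delta_{k,1}=\stira{1}{k}$; an induction on $n$ then gives the second identity. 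I expect the well-definedness step, namely reconciling the degeneration of \e{harm2} at $n=0$ with the presence of two boundary lines so that their overlap is consistent rather than contradictory, to be the main obstacle, whereas the Stirling identities reduce to matching recursions and base values.
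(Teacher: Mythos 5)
Your proposal is correct and follows essentially the same route as the paper's proof: decouple the half-planes at the degenerate line $n=0$, verify that the explicit sum $\sum_{j=1}^n(-1)^{j-1}\binom{n}{j}j^{-k}$ extended to all $k\in\Z$ satisfies \e{harm2} and the boundary data in the upper half, read off the subset-number identity from \e{ssub}, and obtain the cycle-number identity by matching the recursion \e{s2} through the substitution $(n-1)!\stirc{-n}{k}$. Your explicit uniqueness argument via the difference array and your anchoring on the $k=-1$ line (rather than the paper's row $n=1$ and column $k=0$) are only cosmetic variations.
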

\begin{proof}
For clarity, let $S(n,k)$ denote the  numbers defined recursively by \e{harm2} and \e{hc}. We first note that \e{harm2} implies  $S(0,k)=0$ for all $k$ and this does not conflict with \e{hc}. The recursion then gives $S(1,k)=1$ for all $k$, and next that $S(n,0)=1$ for all $n \gqs 1$.

Let $T(n,k)$ denote the right side of \e{new}. Check that $T(1,k)=T(n,0)=1$ for $n\gqs 1$ and all $k$, agreeing with $S(n,k)$. It follows from  the usual Pascal identity that $T(n,k)$ satisfies the same recursive relation  as $S(n,k)$ for $n\gqs 2$. Hence $S(n,k)=T(n,k)$ for all $n,k$ with $n\gqs 1$. That means $S(n,k)=\stirc{n}{k}$ for all $n,k \gqs 1$ and the new definition agrees with the old one. The formula \e{ssub}
 now shows the left identity in \e{hc2} is true.



\SpecialCoor
\psset{griddots=5,subgriddiv=0,gridlabels=0pt}
\psset{xunit=0.7cm, yunit=0.5cm}
\psset{linewidth=1pt}
\psset{dotsize=2pt 0,dotstyle=*,arrowsize=2pt 3}

\newrgbcolor{pale}{0.9 0.9 1}
\newrgbcolor{pale2}{0.8 0.8 1}
\newrgbcolor{blu2}{0.4 0.4 1}

\begin{figure}[h]
\begin{center}
\begin{pspicture}(-10,-7)(10,7) 

\psline[linecolor=gray]{->}(-6,-7)(-6,7)
\multirput(-6,-6)(0,1){13}{\psline[linecolor=gray](0,0)(-0.15,0)}

\psline[linecolor=gray]{->}(-7,-6)(7,-6)
\multirput(-6,-6)(1,0){13}{\psline[linecolor=gray](0,0)(0,-0.2)}

\rput(-6.7,6.5){$n$}

\rput(-6.7,-3.5){$_{-4}$}
\rput(-6.7,-1.5){$_{-2}$}
\rput(-6.5,0.5){$_{0}$}
\rput(-6.5,2.5){$_{2}$}
\rput(-6.5,4.5){$_{4}$}

\rput(6.5,-6.7){$k$}

\rput(0.5,-6.5){$_{0}$}
\rput(2.5,-6.5){$_{2}$}
\rput(4.5,-6.5){$_{4}$}
\rput(-1.5,-6.5){$_{-2}$}
\rput(-3.5,-6.5){$_{-4}$}

\pspolygon[linecolor=pale,fillstyle=solid,fillcolor=pale]
(-5,1)(0,1)(0,2)(-1,2)(-1,3)(-2,3)(-2,4)(-3,4)(-3,5)(-4,5)(-4,6)(-5,6)

\pspolygon[linecolor=pale,fillstyle=solid,fillcolor=pale]
(0,1)(6,1)(6,6)(0,6)

\pspolygon[linecolor=pale,fillstyle=solid,fillcolor=pale]
(1,-5)(1,0)(2,0)(2,-1)(3,-1)(3,-2)(4,-2)(4,-3)(5,-3)(5,-4)(6,-4)(6,-5)

\pspolygon[linecolor=pale2,fillstyle=solid,fillcolor=pale2]
(-5,1)(6,1)(6,2)(-5,2)

\pspolygon[linecolor=pale2,fillstyle=solid,fillcolor=pale2]
(0,6)(0,1)(1,1)(1,6)

\pspolygon[linecolor=pale2,fillstyle=solid,fillcolor=pale2]
(1,-5)(1,0)(2,0)(2,-5)

\pspolygon[linecolor=blu2,fillstyle=solid,fillcolor=blu2]
(1,-1)(1,0)(2,0)(2,-1)

\pspolygon[linecolor=blu2,fillstyle=solid,fillcolor=blu2]
(-1,1)(0,1)(0,2)(-1,2)

\psline[linecolor=orange](0,-5)(0,6)
\psline[linecolor=orange](1,-5)(1,6)
\psline[linecolor=orange](-5,0)(6,0)
\psline[linecolor=orange](-5,1)(6,1)

\rput(3.5,4){$\displaystyle \stirc{n}{k}$}

\rput(-9,4){$\displaystyle (-1)^{n+1} n! \stirb{-k}{n}$}

\pscurve{->}(-6.75,3.8)(-5,3.8)(-3.6,3.5)

\rput(8.7,-3){$\displaystyle \frac 1{(-n-1)!} \stira{-n}{k}$}
\pscurve{->}(6.5,-3.1)(5.5,-3.1)(4,-3.5)

\end{pspicture}
\caption{The harmonic multiset numbers $\stirc{n}{k}$ for $-5 \lqs n,k \lqs 5$ \label{fig}}
\end{center}
\end{figure}


The recursion \e{harm2} gives no information about $S(-1,k)$, so we may impose the initial condition $S(-1,k)=\delta_{k,1}$ from \e{hc}. Then recursively we obtain $S(n,k)=0$ for all $n,k \lqs 0$, (and there is no conflict with $S(n,-1)=\delta_{n,1}$). Now set $U(n,k)$ to be $(n-1)! S(-n,k)$ for $n\gqs 1, k \gqs 0$. We have $U(n,0)=0=\stira{n}{0}$ for $n\gqs 1$ and $U(1,k)=\delta_{k,1}=\stira{1}{k}$ for $k \gqs 0$. Also $U(n,k)$ satisfies the same recursion as $\stira{n}{k}$ in \e{s2}. The right identity in \e{hc2} follows.
\end{proof}

The initial conditions \e{hc} at $n=\pm 1$, $k=\mp 1$ are highlighted in Figure \ref{fig}.
They are the natural seeds to obtain the Stirling numbers in quadrants 2 and 4 since the  recursions satisfied by the Stirling numbers and the harmonic multiset numbers are  essentially just rotated versions of  the same relation.  The reason that quadrant 1 is full of positive numbers instead of zeros, (compare with  the top right of \cite[Table 267]{Knu} in the usual Stirling case), is the slight difference in \e{harm2} at $n=0$ from the Stirling relations, and the fact that $\stirc{0}{0}$ is $0$ instead of $1$.

With our extended definition of $\stirc{n}{k}$, the symmetric triple \e{xhar} becomes
\begin{equation}\label{xxhar}
  e_k = \stirc{-n-1}{k+1}, \qquad h_k = \stirc{n}{k},  \qquad p_k = H_n^{(k)}.
\end{equation}
Also by \e{ones},
\begin{equation}\label{harm1}
 \stira{n+1}{2}/n!=\stirc{-n-1}{2} = \stirc{n}{1}=H_n \qquad (n\gqs 1).
\end{equation}
With the proof of Theorem \ref{you} we see that \e{new} is valid for all $n\gqs 0$ and $k\in \Z$, so that
\begin{equation}\label{nw2}
  \stirc{0}{k} = 0, \qquad \stirc{1}{k} = 1, \qquad \stirc{2}{k} = 2-\frac 1{2^k}, \qquad \stirc{3}{k} = 3-\frac 3{2^k}+\frac 1{3^k} \qquad (k\in \Z).
\end{equation}
With \e{xhar} we may express $\stirc{n}{k}$ for $n,k\gqs 1$ in terms of the Stirling cycle numbers or the higher order  harmonic numbers using the transition formulas. This last was also done in \cite[p. 7]{FS95}, \cite[Sect. 3]{Bat} and \cite[Sect. 3.1]{Roman}. We obtain
\begin{equation}\label{chd}
  \stirc{n}{k} = \sum_{j=0}^k \frac 1{j!} \dm_{k,j}\left(\frac{H_n^{(1)}}{1},\frac{H_n^{(2)}}{2}, \frac{H_n^{(3)}}{3}, \cdots \right) \qquad (n\gqs 1, k\gqs 0),
\end{equation}
with, for $n\gqs 1$,
\begin{equation}\label{chd2}
  \stirc{n}{0}=1, \qquad \stirc{n}{1}=H_n, \qquad \stirc{n}{2}=\frac{H_n^2}2+\frac{H_n^{(2)}}2, \qquad
  \stirc{n}{3}=\frac{H_n^3}6+\frac{H_n H_n^{(2)}}2 +\frac{H_n^{(3)}}3.
\end{equation}

We also find
\begin{align} \label{bg}
  H_n^{(k)} & = k \sum_{j=1}^k \frac{(-1)^{k-j}}{j \cdot (n!)^j} \dm_{k,j}\left( \stira{n+1}{2},\stira{n+1}{3} , \cdots \right),\\
  \stira{n+1}{k+1} & = n! \sum_{j=0}^k \frac {(-1)^{k-j}}{j!} \dm_{k,j}\left(\frac{H_n^{(1)}}{1},\frac{H_n^{(2)}}{2}, \frac{H_n^{(3)}}{3}, \cdots \right), \label{bg3}
\end{align}
with, for instance,
\begin{equation} \label{bg4}
\stira{n+1}{3}= \frac{n!}{2}\left( H_n^2-H_n^{(2)}\right), \qquad \stira{n+1}{4}= \frac{n!}{6}\left( H_n^3-3H_n H_n^{(2)}+2H_n^{(3)}\right).
\end{equation}
For further interesting properties of $\stirc{n}{k}$ when $n,k \gqs 1$, including connections to polylogarithms,  see \cite{Bat} where they are denoted $S_n(k)$ and \cite{Roman} where they are called {\em Roman harmonic numbers} with notation $c_n^{(k)}$, based on earlier work by Loeb, Rota and Roman. Up to a sign and factorial, they are also the {\em negative-positive Stirling numbers} in \cite{Bra}. The case $k<0$ is briefly mentioned in \cite{Bat} with the connection to the Stirling subset numbers $\stirb{n}{k}$ noted; $n<0$ is not discussed. The   negative $k$ case is not considered in \cite{Roman}, though $c_n^{(k)}$ with negative $n$ is studied and the Stirling cycle numbers $\stira{n}{k}$ are found. The initial conditions and recursion relation for the Roman numbers are not quite the same as \e{hc} and \e{harm2}, differing when $n=0$. The result is that $c_0^{(0)}=1$, differing from  $\stirc{0}{0}=0$, and for $n\lqs -1$, $k\gqs 0$ we have $c_n^{(k)} = -\stirc{n}{k} \lqs 0$.

In summary, the array
 of numbers $\stirc{n}{k}$ appear naturally in the symmetric triple \e{xhar}, and  Theorem \ref{you} gives a very convenient way to describe them while also including both kinds of Stirling numbers.
\end{proof}

\subsection{Examples of exponential type} \label{etype}

The first example in this section is \cite[Ex. 2]{Macd}. The standard definitions of Hermite, Bernoulli  and Eulerian polynomials can be seen from the succeeding examples.

\begin{eg}[The exponential function] \label{exp}
\begin{equation}\label{xexp}
  \left(e^{x t}, e^{x t}, x \right) \qquad \text{with} \qquad e_k=\frac{x^k}{k!}, \quad h_k=\frac{x^k}{k!}, \quad p_k= x \cdot \delta_{k,1}.
\end{equation}
\end{eg}

\begin{eg}[Hermite polynomials $\mathcal{H}_k(x)$] \label{herm}
\begin{equation}\label{xher}
  \left(e^{2x t+t^2}, \  e^{2x t-t^2}, \  2x-2t\right) \qquad \text{with} \qquad e_k=\frac{\mathcal{H}_k(ix)}{i^k k!}, \quad h_k=\frac{\mathcal{H}_k(x)}{k!},
\end{equation}
and $p_k=2x \cdot \delta_{k,1}- 2 \cdot \delta_{k,2}$. 
\end{eg}

\begin{eg}[Bernoulli numbers $B_k$] \label{nou}
\begin{equation}\label{xber}
  e_k= \frac{(-1)^k}{(k+1)!}, \qquad  h_k= \frac{B_k}{k!}, \qquad  p_k= (-1)^{k-1}\frac{B_k}{k!},
\end{equation}
coming from
\begin{equation}\label{rus}
 \left( \frac{e^{-t}-1}{-t} ,  \   \frac{t}{e^t-1},  \  \frac{1}{t}\left(1-  \frac{-t}{e^{-t}-1}\right)\right).
\end{equation}
\end{eg}

\begin{proof}[Discussion]
This example corresponds to the Newton-Euler pairs in \cite[Exs. 16, 21]{Sun05} and matches \e{eqq} with $c=-1$ using \e{macd1}.
The transition formula $h_k \leftrightarrow e_k$ and \e{far} give the simple identities
\begin{equation}\label{brn}
  \frac{B_k}{k!}= \sum_{j=0}^k (-1)^j \dm_{k,j}\left(\frac{1}{2!},\frac{1}{3!},\frac{1}{4!}, \cdots \right)
  =
   (-1)^k \left|
  \begin{matrix}
  1/2!  & 1/1! & &  \\
  1/3!  & 1/2!  & 1/1! &  \\
  \vdots & & \ddots &  \\
  \scriptstyle{1/(k+1)!} & 1/k! & \dots &  1/2!
  \end{matrix}
  \right|.
\end{equation}
Many variations of \e{xber} are possible. For instance, let $t\to -t$ in \e{rus}. Then divide that triple by the same but with $t\to 2t$ to get
\begin{equation}\label{xber-sun}
  e_k= (2-2^{k+2})\frac{B_{k+1}}{(k+1)!}, \qquad  h_k= \frac{(-1)^k}{2 \cdot k!}+ \frac{\delta_{k,0}}2, \qquad  p_k= (2^k-1)\frac{B_k}{k!},
\end{equation}
corresponding to  \cite[Ex. 20]{Sun05}.
\end{proof}

Multiplying Example \ref{exp}  by Example \ref{nou}, in the sense of \e{macd4}, produces:
\begin{eg}[Bernoulli polynomials $B_k(x)$] \label{nou2}
\begin{equation}\label{xber2}
  e_k= \frac{x^{k+1}-(x-1)^{k+1}}{(k+1)!}, \qquad  h_k= \frac{B_k(x)}{k!}, \qquad  p_k= (-1)^{k-1}\frac{B_k}{k!} +x \cdot \delta_{k,1}.
\end{equation}
\end{eg}

\begin{eg}[Eulerian polynomials $A_k(x)$] \label{eup}
\begin{equation}\label{eup2}
  e_k= \frac{(1-x)^{k-1}}{k!}+\delta_{k,0} \frac{x}{x-1}, \qquad  h_k= \frac{A_k(x)}{k!}, \qquad  p_k= \frac{x A_{k-1}(x)}{(k-1)!}+ \delta_{k,1} (1-x),
\end{equation}
coming from
\begin{equation}\label{eup3}
 \left( \frac{x-e^{(1-x)t}}{x-1} ,  \   \frac{x-1}{x-e^{(x-1)t}},  \   x\frac{x-1}{x-e^{(x-1)t}} -x+1\right).
\end{equation}
\end{eg}

\begin{proof}[Discussion]
This example is based on \cite[Sect. 4]{Bre}. See \cite[Sect. 6.5]{Comtet} for details about the Eulerian polynomials (with a slightly different normalization there). They satisfy
\begin{equation} \label{frm}
   \sum_{n=1}^\infty n^k x^n =\frac{x A_k(x)}{(1-x)^{k+1}} \qquad (k \in \Z_{\gqs 0}),
\end{equation}
with $A_0(x)=A_1(x)=1$, $A_2(x)=1+x$ and $A_3(x)=1+4x+x^2$. Euler used them to compute the Riemann zeta values $\zeta(-k)$ formally, but correctly. With the $h_k \leftrightarrow e_k$ transition formula and \e{mulk}, \e{muln},
\begin{multline*} 
  A_k(x)  =  k! \sum_{j=0}^k  \dm_{k,j}\left(1,\frac{x-1}{2!},\frac{(x-1)^2}{3!},\frac{(x-1)^3}{4!}, \cdots \right)\\
   =  k! \sum_{j=0}^k  (x-1)^{k-j} \dm_{k,j}\left(\frac{1}{1!},\frac{1}{2!},\frac{1}{3!},\frac{1}{4!}, \cdots \right)
   =   \sum_{j=0}^k  (x-1)^{k-j} j! \stirb{k}{j},
\end{multline*}
using \e{bakb} for the last equality. This proves Frobenius's 1910 formula for $A_k(x)$; see \cite[p. 244]{Comtet}. Example \ref{eup} and \e{eup2} are just the starting point in \cite[Chap. 3]{MR15} for an in-depth  study of permutation statistics.
\end{proof}

\begin{eg}[Cosine] \label{cs}
\begin{equation}\label{xcos}
  \left(\sec t, \cos t, -\tan t\right) \qquad \text{with} \qquad e_{2k}=\frac{U_{2k}}{(2k)!}, \quad h_{2k}=\frac{(-1)^k}{(2k)!}, \quad p_{2k}= -\frac{U_{2k-1}}{(2k-1)!}.
\end{equation}
\end{eg}

\begin{proof}[Discussion]
 See also \cite[Ex. 19]{Sun05}. The odd indexed elements are zero in this example and the next two. Here $U_n$ counts the number of alternating permutations of $n$ objects. In terms of Bernoulli polynomials and numbers we have
$$
e_{2k} = (-1)^{k-1}2^{4k+2} \frac{ B_{2k+1}(1/4)}{(2k+1)!}, \qquad p_{2k}=  (-1)^{k}\left(2^{4k}-2^{2k}\right) \frac{B_{2k}}{(2k)!}.
$$ 
The $U_{2k}$ are called secant numbers and the $U_{2k-1}$ are  tangent numbers. See \cite[Eq. (14.3)]{Ra}, \cite[p. 287]{Knu}, \cite[Thm. 3.5]{MR15} for more information.  By the $e_k \leftrightarrow h_k$ transition formula,
 \begin{equation*}
  \frac{U_{2k}}{(2k)!}= \sum_{j=0}^{2k} (-1)^{k-j} \dm_{2k,j}\left(0,\frac{1}{2!},0,\frac{1}{4!},0, \cdots \right). \qedhere
\end{equation*}
\end{proof}

\begin{eg}[Sine]\label{sn}
\begin{equation}\label{xsin}
    e_{2k}=(-1)^{k-1}\frac{\left(2^{2k}-2\right) B_{2k}}{(2k)!}, \qquad h_{2k}=\frac{(-1)^k}{(2k+1)!},  \qquad p_{2k}=(-1)^{k}\frac{2^{2k} B_{2k}}{(2k)!}.
\end{equation}
coming from
$
\left(t \csc t, (\sin t)/t, -1/t+ \cot t\right).
$
\end{eg}

Example \ref{sn} relates to \cite[Exs. 18,  22]{Sun05}. Euler's formula also shows the Riemann zeta connection $p_{2k}=-2 \pi^{-2k} \zeta(2k)$. Dividing the sine triple by the cosine triple gives:

\begin{eg}[Tangent] \label{tg}
\begin{equation}\label{xtan}
    \left(t \cot t, \ \frac{\tan t}t,  \ -\frac 1t+ \tan t +\cot t\right)
\end{equation}
and
\begin{equation}\label{xtan2}
    e_{2k}=(-1)^{k}\frac{2^{2k} B_{2k}}{(2k)!}, \qquad h_{2k}=\frac{U_{2k+1}}{(2k+1)!},  \qquad p_{2k}=(-1)^{k}(2^{2k+1}-2^{4k})\frac{ B_{2k}}{(2k)!}.
\end{equation}
\end{eg}

The odd indexed coefficients are zero in Examples \ref{cs}, \ref{sn} and \ref{tg}. By Proposition \ref{root} such triples $(E,H,P)$ may be rewritten as
\begin{equation} \label{til}
 (\tilde{E}(t),\tilde{H}(t),\tilde{P}(t))= \left( E(i\sqrt{t}), \ H(\sqrt{t}), \ \frac{1}{2 \sqrt{t}}P(\sqrt{t}) \right),
\end{equation}
with
\begin{equation*}
  \tilde{e}_k = (-1)^k e_{2k}, \qquad \tilde{h}_k =  h_{2k}, \qquad \tilde{p}_k =  p_{2k}/2.
\end{equation*}

\begin{eg}[Hyperbolic sine, squared] \label{ttx}
\begin{equation}\label{xreh}
    \left(\frac t4 \sin^{-2}\left( \frac{\sqrt{t}}2 \right),   \   \frac 4t \sinh^{2}\left( \frac{\sqrt{t}}2 \right),  \
    -\frac 1t + \frac 1{2\sqrt{t}} \coth\left( \frac{\sqrt{t}}2 \right)\right)
\end{equation}
with
\begin{equation}\label{ex55}
  e_k = (-1)^k (1-2k) \frac{B_{2k}}{(2k)!}, \qquad h_k =  \frac{2}{(2k+2)!}, \qquad p_k =  \frac{B_{2k}}{(2k)!}.
\end{equation}
\end{eg}

\begin{proof}[Discussion]
This example extends the work in \cite[p. 101]{Saa}  and \cite[Ex. 17]{Sun05}. To obtain $p_k=B_{2k}/(2k)!$ we start with Example \ref{sn}, (alternatively Example \ref{nou2} with $x=1/2$ could be used). As in section \ref{calc}, let $t \to t/2$ and apply \e{til}. Then let $t \to -t$ so that $\sqrt{t} \to i\sqrt{t}$ and  the hyperbolic versions of $\sin$ and $\cot$ are needed. Squaring the resulting triple yields \e{xreh}.
The formula in \e{ex55} for $e_k$ comes from the coefficients we have already seen for $\cot(t)$ since $\cot'(t)=-\sin^{-2}(t)$. With $\sinh^2(t)=(\cosh(t)-1)/2$, the  well-known expansions of $\cosh(t)$ and $\coth(t)$ give $h_k$ and $p_k$.

The $p_k \leftrightarrow h_k$ transition formula implies the
identity, for $k\gqs 1$,
\begin{equation}\label{brn7}
  \frac{B_{2k}}{k \cdot (2k)!}= -\sum_{j=1}^k \frac{(-2)^j}j \dm_{k,j}\left(\frac{1}{4!},\frac{1}{6!},\frac{1}{8!}, \cdots \right).
\end{equation}
This is equivalent to \cite[Eq. LXXVI, p. 101]{Saa} after a correction. In the same way, from Examples \ref{cs}, \ref{sn} after applying \e{til},
\begin{align}\label{ste}
  (2^{2k}-1)2^{2k-1} \frac{B_{2k}}{k \cdot (2k)!} & = -\sum_{j=1}^k \frac{(-1)^j}j \dm_{k,j}\left(\frac{1}{2!},\frac{1}{4!},\frac{1}{6!}, \cdots \right), \\
  2^{2k-1} \frac{B_{2k}}{k \cdot (2k)!} & = -\sum_{j=1}^k \frac{(-1)^j}j \dm_{k,j}\left(\frac{1}{3!},\frac{1}{5!},\frac{1}{7!}, \cdots \right). \label{ste2}
\end{align}
These identities \e{ste} and \e{ste2} are due to Stern \cite[Eqs. (5), (9)]{Ste}.
\end{proof}

\subsection{Hypergeometric summation identities}

We would like to let $n\to \infty$ in the harmonic number Example \ref{onic}. The convergence issue at $k=1$ can be fixed by dividing by the triple $(n^t,n^t,\log n)$ first. In fact it is simpler to start with the answer, given next, where the digamma function $\psi(t)$ is defined as $\G'(t)/\G(t)$ and $p_k$ comes from the expansion \cite[Eq. (6.3.14)]{AS} for $\psi(1-t)$, valid for $|t|<1$. We also use the Euler reflection formula for the gamma function in \cite[Thm. 1.2.1]{AAR}.

\begin{eg}[Gamma function, zeta values] \label{gammax}
\begin{equation}\label{xgam}
  \left( \frac{\sin \pi t}{\pi t} \G(1-t), \  \G(1-t), \ -\psi(1-t)\right)
  \qquad \text{with} \qquad p_k= \zeta(k)
\end{equation}
for $k \gqs 2$ and $p_1 = \g$, Euler's constant.
\end{eg}

\begin{proof}[Discussion]
The transition formulas can be used to find $e_k$ and $h_k$ with, for example,
\begin{equation*}
  h_k= \sum_{j=0}^k \frac{1}{j!} \dm_{k,j}\left(\g,\frac{\zeta(2)}{2},\frac{\zeta(3)}{3}, \cdots \right). \qedhere
\end{equation*}
\end{proof}

Examples \ref{onic} and \ref{gammax} give the relations needed to develop the hypergeometric summation identities of Chu \cite{Chu97}. The idea is to expand hypergeometric summation formulas into power series and compare coefficients on both sides. We may illustrate this with Gauss's summation formula, \cite[Thm. 2.2.2]{AAR}:
\begin{equation}\label{gau}
  \sum_{n=0}^\infty \frac{(x)_n (y)_n}{n! (1-z)_n} =: {_2}F_1(x,y;1-z;1) = \frac{\G(1-z) \G(1-x-y-z)}{\G(1-x-z) \G(1-y-z)},
\end{equation}
where $(x)_n$ means $x(x+1) \cdots (x+n-1)$. 
With $p_k$ as in Example  \ref{gammax}, the right side of \e{gau} is
\begin{equation} \label{pxe}
  \exp\left( \sum_{k=1}^\infty \frac{p_k}{k} \left[ z^k +(x+y+z)^k-(x+z)^k-(y+z)^k\right]\right).
\end{equation}
The left side is
\begin{equation} \label{lef}
  1+x y \sum_{n=0}^\infty  \frac{\left(1+\frac{x}{1} \right) \cdots \left(1+\frac{x}{n} \right) \cdot \left(1+\frac{y}{1} \right) \cdots \left(1+\frac{y}{n} \right)}{(n+1)^2 \left(1-\frac{z}{1} \right) \cdots \left(1-\frac{z}{n+1} \right)}
\end{equation}
and we recognize $E$ and $H$ from Example \ref{onic}.

To make the example simpler, let $y=x$ and $z=0$. The $e_k \leftrightarrow p_k$ transition formula for the square of the triple in Example \ref{onic} gives
\begin{equation*}
  [x^m] \left(\left(1+\frac{x}{1} \right) \cdots \left(1+\frac{x}{n} \right)\right)^2 = \sum_{\ell=0}^m (-1)^{m-\ell} \frac{1}{\ell!} \dm_{m,\ell}\left(\frac{2H_n^{(1)}}{1},\frac{2H_n^{(2)}}{2}, \frac{2H_n^{(3)}}{3}, \cdots \right).
\end{equation*}
Comparing this with the expansion of \e{pxe} by \e{cox} proves the following.

\begin{prop} \label{nov}
For $m\gqs 0$,
\begin{multline*}
  \sum_{\ell=0}^m (-1)^{m-\ell} \frac{2^\ell}{\ell!} \sum_{n=0}^\infty \frac{1}{(n+1)^2} \dm_{m,\ell}\left(\frac{H_n^{(1)}}{1},\frac{H_n^{(2)}}{2}, \frac{H_n^{(3)}}{3}, \cdots \right) \\
  = \sum_{j=0}^{m+2} \frac{1}{j!} \dm_{m+2,j}\left(0,(2^2-2)\frac{\zeta(2)}{2},(2^3-2)\frac{\zeta(3)}{3}, (2^4-2)\frac{\zeta(4)}{4}, \cdots \right).
\end{multline*}
\end{prop}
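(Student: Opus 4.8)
The plan is to specialize Gauss's formula \e{gau} at $y=x$, $z=0$ and compare the coefficient of $x^{m+2}$ on the two sides. Both sides of \e{gau} are analytic in a neighbourhood of $x=y=z=0$ (the hypergeometric series converges at $1$ there, and the gamma quotient is holomorphic since each argument is near $1$), so their Taylor coefficients agree and coefficient extraction after the substitution is legitimate. Under $y=x$, $z=0$ the bracket in \e{pxe} becomes $(2x)^k-2x^k=(2^k-2)x^k$, which annihilates the $k=1$ term and thereby disposes of $p_1=\g$; meanwhile the denominator $(1-z)_{n+1}$ in \e{lef} collapses to $1$, so the left side reduces to $1+x^2\sum_{n\gqs 0}(n+1)^{-2}E_n(x)^2$, where $E_n(x)=\prod_{j=1}^n(1+x/j)$ is the $E$-series of Example \ref{onic}.

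For the left side I would take $[x^{m+2}]$ of $x^2\sum_n (n+1)^{-2}E_n(x)^2$, which is $\sum_{n\gqs 0}(n+1)^{-2}[x^m]E_n(x)^2$. The $e_r\leftrightarrow p_k$ transition formula \e{ehp2}, applied to the square of the Example \ref{onic} triple (whose power sums are doubled, by \e{macd2} with $\alpha=2$), gives $[x^m]E_n(x)^2$ as the sum over $\ell$ of $\tfrac{(-1)^{m-\ell}}{\ell!}\dm_{m,\ell}(2H_n^{(1)}/1,2H_n^{(2)}/2,\dots)$, which is exactly the identity recorded just before the statement. Pulling the factor $2^\ell$ out with \e{mulk} and interchanging $\sum_n$ with $\sum_\ell$ then produces precisely the left-hand side of the Proposition.

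For the right side I would expand the exponential $\exp\bigl(\sum_{k\gqs 2}(2^k-2)\zeta(k)x^k/k\bigr)$ using \e{cox} with $\alpha=1$ and $f(x)=\sum_{k}(2^k-2)(p_k/k)x^k$, whose coefficient sequence is $\bigl(0,(2^2-2)\zeta(2)/2,(2^3-2)\zeta(3)/3,\dots\bigr)$ because the $k=1$ coefficient $(2-2)\g$ vanishes. Taking $[x^{m+2}]$ immediately yields $\sum_{j=0}^{m+2}(1/j!)\dm_{m+2,j}(\dots)$, the right-hand side of the Proposition. Equating the two coefficient computations completes the proof.

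I expect the only genuine obstacle to be justifying the term-by-term coefficient extraction from the infinite sum $\sum_n(n+1)^{-2}E_n(x)^2$, equivalently the interchange of $\sum_n$ with $\sum_\ell$. This is harmless: $[x^m]E_n(x)^2$ is a fixed polynomial in $H_n^{(1)},\dots,H_n^{(m)}$, and the only unbounded quantity among these, $H_n^{(1)}=H_n$, grows merely like $\log n$, so $[x^m]E_n(x)^2=O((\log n)^m)$ and each series $\sum_n(n+1)^{-2}\dm_{m,\ell}(H_n^{(1)},\dots)$ converges absolutely, making the rearrangement valid. Everything else is routine bookkeeping with \e{cox}, \e{mulk} and \e{ehp2}.
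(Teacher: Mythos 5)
Your proposal is correct and follows essentially the same route as the paper: specialize Gauss's formula \e{gau} at $y=x$, $z=0$, apply the $e_k \leftrightarrow p_k$ transition formula to the square of the Example \ref{onic} triple on the left, and expand \e{pxe} via \e{cox} on the right, comparing coefficients of $x^{m+2}$. The absolute-convergence justification for interchanging $\sum_n$ with $\sum_\ell$ is a welcome detail that the paper leaves implicit.
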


This generates a harmonic number identity for each $m$. When $m=1,2,3$ we find
\begin{align}\label{eu}
   \sum_{n=0}^\infty \frac{H_n}{(n+1)^2} & = \zeta(3),\\
   \sum_{n=0}^\infty \frac{2H^2_n- H_n^{(2)}}{(n+1)^2} & = \frac 12\zeta(2) + \frac 72\zeta(4) = \frac{19 \pi^4}{360},\label{eu2}\\
  \sum_{n=0}^\infty \frac{2H^3_n- 3H_n H_n^{(2)}+H_n^{(3)}}{(n+1)^2} & = 3\zeta(2)\zeta(3) + \zeta(5). \label{eu3}
\end{align}
The identity \e{eu} is \cite[Eq. (1.1a)]{Chu97}, and  due to Euler, with the left side being the multiple zeta value $\zeta(2,1)$. See \cite{Chu97}, and the many subsequent papers on this topic, for examples of the wide variety of identities that can be produced in this way. Similar identities appear in \cite{Bat}. The novelty of  Proposition \ref{nov} is that it shows the structure of all the identities produced at once, at least in this one-variable example. The paper \cite{Hoff} has  more on the connections between symmetric functions and multiple zeta values.

The already very elegant identity \cite[Eq. (5.5)]{Chu97} has an even more succinct statement with our notation from Theorem \ref{you}: 

\begin{prop}
For all  integers  $p\gqs 0$ and $q\gqs 1$,
\begin{equation}\label{pqu}
  \sum_{n=1}^\infty \frac{1}{n^2} \stirc{n}{p} \cdot \stirc{-n}{q} = \binom{p+q}{p} \zeta(p+q+1).
\end{equation}
\end{prop}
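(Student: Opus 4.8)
The plan is to realize both harmonic multiset factors as coefficients of the generating series attached to Example \ref{onic}, interchange the summation over $n$ with coefficient extraction, and reduce the whole double sum to a single two-variable series $S(x,y)$ whose closed form is a difference of digammas.

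First, by Example \ref{onic} and \e{xha}, for $n\gqs 1$ we have $\stirc{n}{p}=[x^p]\prod_{j=1}^n (1-x/j)^{-1}$, the complete homogeneous coefficient $h_p$ in the variables $1,1/2,\dots,1/n$. For the negative index, the right identity in \e{hc2} together with the $e_k$ column of Example \ref{onic} read off in $n-1$ variables gives $\stirc{-n}{q}=\stira{n}{q}/(n-1)! = [y^{q-1}]\prod_{j=1}^{n-1}(1+y/j)$, consistent with the seed $\stirc{-1}{q}=\delta_{q,1}$ coming from the empty product. Hence
\[
  \sum_{n=1}^\infty \frac{1}{n^2}\stirc{n}{p}\stirc{-n}{q}=[x^p y^{q-1}]\,S(x,y),\qquad S(x,y):=\sum_{n=1}^\infty \frac{1}{n^2}\prod_{j=1}^n\frac{1}{1-x/j}\prod_{j=1}^{n-1}\Bigl(1+\frac yj\Bigr).
\]
The products telescope into Pochhammer symbols: $\prod_{j=1}^n(1-x/j)^{-1}=n!/(1-x)_n$ and $\prod_{j=1}^{n-1}(1+y/j)=(1+y)_{n-1}/(n-1)!$, and since $n!/(n-1)!=n$ the factor $1/n^2$ collapses to $1/n$, leaving $S(x,y)=\sum_{n\gqs 1}\tfrac1n (1+y)_{n-1}/(1-x)_n=\frac{1}{1-x}\,{}_3F_2(1,1,1+y;2,2-x;1)$. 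All manipulations are valid for $x,y$ near $0$, where the products grow only polynomially in $n$ so the series converges absolutely and term-by-term coefficient extraction is legitimate.

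The crux is evaluating this ${}_3F_2(1)$ in closed form. The route I would take is the Beta integral: write $(1+y)_{n-1}/(1-x)_n=\frac{\G(1-x)}{\G(1+y)\G(1-x-y)}\int_0^1 t^{n+y-1}(1-t)^{-x-y}\,dt$, interchange sum and integral, and use $\sum_{n\gqs 1}t^n/n=-\log(1-t)$. The resulting integral $\int_0^1 t^{y-1}(1-t)^{-x-y}\log(1-t)\,dt$ is obtained by differentiating the Beta integral $\int_0^1 t^{y-1}(1-t)^{s-1}\,dt=\G(y)\G(s)/\G(y+s)$ with respect to its second parameter at $s=1-x-y$ (so $y+s=1-x$), producing $\frac{\G(y)\G(1-x-y)}{\G(1-x)}\bigl[\psi(1-x-y)-\psi(1-x)\bigr]$. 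Cancelling the Gamma factors and using $\G(1+y)=y\G(y)$ collapses everything to
\[
  S(x,y)=\frac{\psi(1-x)-\psi(1-x-y)}{y}.
\]
As a check, letting $y\to 0$ gives $\psi'(1-x)$, and at $x=0$ this is $\psi'(1)=\zeta(2)$, matching the obvious value $S(0,0)=\sum_{n\gqs 1}1/n^2$.

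Finally, expand with the $\psi(1-t)$ series underlying Example \ref{gammax}, namely $\psi(1-t)=-\g-\sum_{k\gqs 2}\zeta(k)t^{k-1}$, so that
\[
  S(x,y)=\frac1y\sum_{k\gqs 2}\zeta(k)\bigl[(x+y)^{k-1}-x^{k-1}\bigr]=\sum_{k\gqs 2}\zeta(k)\sum_{i=1}^{k-1}\binom{k-1}{i}x^{k-1-i}y^{i-1}.
\]
Extracting the coefficient of $x^p y^{q-1}$ forces $i=q$ and $k=p+q+1$, leaving $\binom{p+q}{q}\zeta(p+q+1)=\binom{p+q}{p}\zeta(p+q+1)$, which proves \e{pqu}. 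The main obstacle is the closed-form evaluation of the ${}_3F_2(1)$: the differentiated-Beta-integral computation, together with the careful justification of interchanging summation, integration, and coefficient extraction on a common domain of convergence, is where the real work lies, whereas the reduction to $S(x,y)$ and the final coefficient extraction are routine.
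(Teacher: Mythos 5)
Your proof is correct, and structurally it is the paper's proof with the central evaluation re-derived from first principles. The paper also reduces \e{pqu} to the two-variable generating function identity
\begin{equation*}
  y\sum_{n=1}^\infty \frac{1}{n^2}\prod_{j=1}^{n}\Bigl(1-\frac{x}{j}\Bigr)^{-1}\prod_{j=1}^{n-1}\Bigl(1+\frac{y}{j}\Bigr) \;=\; \psi(1-x)-\psi(1-x-y),
\end{equation*}
which is exactly your statement $y\,S(x,y)=\psi(1-x)-\psi(1-x-y)$ after relabelling; it obtains the closed form by subtracting $1$ from Gauss's summation \e{gau}, dividing by one parameter and letting it tend to $0$, so the digamma difference drops out of the already-quoted ${}_2F_1(1)$ evaluation (see \e{lef2}, \e{rgt} and Example \ref{gammax}). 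You instead prove the same identity directly: telescoping the products into Pochhammer symbols, inserting the Beta-integral representation, summing $\sum_{n\gqs 1}t^n/n=-\log(1-t)$, and differentiating the Beta integral in its second parameter. The two computations are equivalent --- your Beta-integral calculation is in effect a proof of the relevant limiting case of Gauss's theorem --- but yours is more self-contained, needing only $\int_0^1 t^{a-1}(1-t)^{b-1}\,dt=\G(a)\G(b)/\G(a+b)$ rather than the full ${}_2F_1$ summation, while the paper's is shorter because it reuses machinery already set up in that section. The identification of the two factors, $\stirc{n}{p}=[x^p]\prod_{j=1}^n(1-x/j)^{-1}$ and $\stirc{-n}{q}=\stira{n}{q}/(n-1)!=[y^{q-1}]\prod_{j=1}^{n-1}(1+y/j)$, and the final coefficient extraction are the same in both. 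One cosmetic slip: the integral you display should carry the factor $-\log(1-t)$ coming from $\sum_{n\gqs 1}t^n/n$; your final formula for $S(x,y)$ nevertheless has the correct sign, as your check $S(0,0)=\zeta(2)$ confirms, so nothing is broken.
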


A version of the proof from \cite{Chu97} is  included next, since we have already done the groundwork. This result is also Theorem 4 of \cite{Hoff}, where it has a different proof.

\begin{proof}  Subtracting $1$ from \e{lef}, dividing by $x$ and letting $x\to 0$ makes
\begin{equation} \label{lef2}
   y \sum_{n=0}^\infty  \frac{ \left(1+\frac{y}{1} \right) \cdots \left(1+\frac{y}{n} \right)}{(n+1)^2 \left(1-\frac{z}{1} \right) \cdots \left(1-\frac{z}{n+1} \right)}
   =\sum_{u=0}^n \sum_{v=0}^\infty y^{u+1} z^v \sum_{n=0}^\infty  \frac{e_u(n) h_v(n+1)}{(n+1)^2}
\end{equation}
with $e_k(n)$ and $h_k(n)$ from \e{xhar} or equivalently \e{xxhar}. Doing the same on the right of \e{gau} yields
\begin{align}
  -\frac{\G'(1-y-z)}{\G(1-y-z)}+ \frac{\G'(1-z)}{\G(1-z)} & =\psi(1-y-z)+\psi(1-z) \notag \\
  & = \sum_{k=1}^\infty p_k \left( (y+z)^{k-1} - z^{k-1}\right), \label{rgt}
\end{align}
with $p_k$ from Example  \ref{gammax}. Comparing powers of $y$ and $z$ in \e{lef2} and \e{rgt} finishes the argument.
\end{proof}

\section{$q$-Series}
\subsection{$q$-Binomial coefficients}
Define
\begin{equation*}
  (a;q)_n:= (1-a)(1-aq)(1-aq^2)\cdots (1-aq^{n-1}).
\end{equation*}
The next result is fundamental and due to Cauchy.
\begin{theorem}[The $q$-binomial theorem] \label{qbin}
For $|q|$, $|t|<1$ and all $a$,
\begin{equation}\label{qm}
  \sum_{m=0}^\infty \frac{(a;q)_m}{(q;q)_m} t^m = \frac{(a t;q)_\infty}{(t;q)_\infty}.
\end{equation}
\end{theorem}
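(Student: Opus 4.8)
The plan is to use the classical functional-equation argument, which isolates the combinatorial content in a single one-line recurrence. Write $g(t)$ for the product on the right-hand side of \e{qm}. Using the factorizations $(at;q)_\infty=(1-at)(aqt;q)_\infty$ and $(t;q)_\infty=(1-t)(qt;q)_\infty$, which follow from the defining product, I would first check that $g$ satisfies
\[
  (1-t)\,g(t) = (1-at)\,g(qt), \qquad g(0)=1.
\]
Indeed, dividing the two factorizations gives $g(t)=\frac{1-at}{1-t}\,g(qt)$ at once, and $g(0)=1$ is immediate since every factor equals $1$ at $t=0$. Note that this holds for arbitrary $a$, with no restriction.

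Next I would expand $g(t)=\sum_{m=0}^\infty A_m t^m$, a convergent power series on $|t|<1$: for $|q|<1$ the infinite products converge absolutely, and $g$ is holomorphic on the unit disk since the only possible poles are at the points $t=q^{-j}$, all of modulus exceeding $1$. Substituting this expansion into the functional equation and comparing the coefficient of $t^m$ on both sides (with $A_{-1}=0$) gives
\[
  A_m - A_{m-1} = q^m A_m - a\,q^{m-1} A_{m-1},
\]
that is, $A_m(1-q^m) = A_{m-1}(1-a q^{m-1})$ for $m\gqs 1$. Since $|q|<1$ forces $1-q^m\neq 0$, this recurrence together with $A_0=1$ determines the coefficients uniquely, and telescoping yields
\[
  A_m = \prod_{j=1}^m \frac{1-a q^{j-1}}{1-q^j} = \frac{(a;q)_m}{(q;q)_m},
\]
which are exactly the coefficients of the left-hand side of \e{qm}. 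Hence the two sides agree as power series in $t$, and by absolute convergence as functions on $|t|<1$, proving the theorem.

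The only genuine subtlety here is analytic bookkeeping rather than combinatorics: one must justify that $g$ is represented by a convergent power series about $t=0$ (so that coefficient comparison is legitimate) and that the product factorizations hold, both of which follow from the absolute convergence of $(\,\cdot\,;q)_\infty$ for $|q|<1$. An alternative and equally clean route would be to verify directly that the left-hand series $\sum_m \frac{(a;q)_m}{(q;q)_m}t^m$ satisfies the same functional equation and normalization, and then invoke uniqueness of such a solution; this avoids computing the coefficients of $g$ and instead identifies the two sides from the shared recurrence. Either way, the entire combinatorial content is contained in the identity $A_m(1-q^m)=A_{m-1}(1-a q^{m-1})$, and since each $A_m$ is in fact polynomial in $a$, the resulting equality of coefficients holds for all $a$ as asserted.
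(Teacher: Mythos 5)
Your proof is correct and is essentially the standard functional-equation argument that the paper itself defers to (it cites \cite[Thm.\ 2.1]{And76} and \cite[Thm.\ 10.2.1]{AAR} for exactly this recurrence $A_m(1-q^m)=A_{m-1}(1-aq^{m-1})$ derived from $(1-t)g(t)=(1-at)g(qt)$). The only nitpick is that the zero of $(t;q)_\infty$ at $t=1$ has modulus exactly $1$, not exceeding $1$, but it still lies outside the open disk $|t|<1$, so the argument stands as written.
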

See \cite[Thm. 2.1]{And76} or \cite[Thm. 10.2.1]{AAR} for the short proof. The $q$-binomial coefficient is defined to be
\begin{equation*}
 \binom{n}{k}_{\!q} := \frac{(q;q)_n}{(q;q)_k (q;q)_{n-k}}.
\end{equation*}
Then, as in \cite[Cor. 10.2.2]{AAR}, the special cases $a=q^{-n}$ and $a=q^{n}$ of Theorem \ref{qbin} imply
\begin{equation} \label{tqn}
  (t;q)_n = \sum_{k=0}^n (-1)^k q^{\binom{k}{2}} \binom{n}{k}_{\!q} t^k, \qquad \frac 1{(t;q)_n} = \sum_{k=0}^n \binom{n+k-1}{k}_{\!q} t^k.
\end{equation}
Therefore, choosing $x_j=q^{j-1}$ in \e{genf} and \e{genf2} yields the next triple.

\begin{eg}[$q$-Binomial coefficients] \label{qbc} For $n \gqs 1$,
\begin{equation}\label{qb}
  e_k = q^{\binom{k}{2}}\binom{n}{k}_{\!q}, \qquad h_k = \binom{n+k-1}{k}_{\!q},  \qquad p_k  = \frac{1-q^{nk}}{1-q^k}.
\end{equation}
\end{eg}

\begin{proof}[Discussion] The convolution and transition formulas give relations between $e_k$, $h_k$ and $p_k$ as rational functions of $q$. For example, when $n=2$ we find the polynomial identity
\begin{equation}\label{mow}
  \sum_{k=0}^m \frac{1}{k!} \dm_{m,k}\left(\frac{1+q}1, \frac{1+q^2}2, \frac{1+q^3}3, \dots \right)  = 1+q+q^2+ \cdots + q^m,
\end{equation}
from the $h_k \leftrightarrow p_k$ transition formula.
Taking the limit as $q\to 1^-$ in Example \ref{qbc} gives \e{nck}, a special case of Example \ref{ex-bc}.
\end{proof}

We may also take the limit as $n\to \infty$ of Example \ref{qbc} when $|q|<1$. 

\begin{eg}[Limit of $q$-binomial coefficients] \label{qbc2}
\begin{equation}\label{qb2}
  e_k = q^{\binom{k}{2}}\frac{1}{(q;q)_k}, \qquad h_k = \frac{1}{(q;q)_k},  \qquad p_k  = \frac{1}{1-q^k}.
\end{equation}
\end{eg}

Examples \ref{qbc} and \ref{qbc2} are given in \cite[Ex. 3, Ex. 4]{Macd}. They are special cases of the next symmetric triple, which is \cite[Ex. 5]{Macd}.

\begin{eg}[$q$-Series with two-parameters]
\begin{equation}\label{qbs}
  e_k = \frac{(-1)^k}{(q;q)_k} \prod_{j=0}^{k-1} (b-a q^j), \qquad h_k = \frac{1}{(q;q)_k} \prod_{j=0}^{k-1} (a-b q^j),  \qquad p_k  = \frac{a^k-b^k}{1-q^k}.
\end{equation}
\end{eg}


\begin{proof}[Discussion]
This follows from setting
\begin{equation}\label{vg}
  H(t):= \frac{(b t;q)_\infty}{(a t;q)_\infty} = \sum_{m=0}^\infty a^m \frac{(b/a;q)_m}{(q;q)_m} t^m
\end{equation}
where the equality in \e{vg} is obtained by letting $t\to at$, $a\to b/a$ in Theorem \ref{qbin}.
A calculation finds
\begin{align}
  \log (t;q)_\infty  = \sum_{j=0}^\infty \log (1-t q^j)
   &  = -\sum_{j=0}^\infty \sum_{m=1}^\infty \frac{(t q^j)^m}{m} \notag\\
   & = - \sum_{m=1}^\infty \frac{t^m}{m} \sum_{j=0}^\infty q^{jm}
   = - \sum_{m=1}^\infty \frac{t^m}{m(1-q^m)}. \label{mov}
\end{align}
Hence
\begin{equation*}
  P(t)=\frac{d}{dt} \log H(t)= \sum_{m=1}^\infty \frac{a^m-b^m}{1-q^m} t^{m-1},
\end{equation*}
and this verifies the formula for $p_k$ in \e{qbs}.
\end{proof}

For future use, the Jacobi triple product identity is
\begin{equation}\label{jac}
  (t;q)_\infty (q/t;q)_\infty (q;q)_\infty = \sum_{k \in \Z} (-1)^k q^{\binom{k}{2}}  t^k,
\end{equation}
and can be made to follow from the left identity in \e{tqn}; see \cite[Thm. 10.4.1]{AAR}.

\subsection{Partitions}
For $S \subseteq \Z_{\gqs 1}$ and any $r$, the generating function
\begin{equation}\label{genpart}
  P_{S,r}(q):= \prod_{j\in S} \frac 1{(1-q^j)^r} = \sum_{n=0}^\infty p_{S,r}(n) q^n,
\end{equation}
defines the numbers $p_{S,r}(n)$. In the common case $r=1$,  $p_{S,1}(n)$ counts the number of partitions of $n$ with parts in $S$. When $r \in \Z_{\gqs 1}$, $p_{S,r}(n)$ is the number of $r$-color partitions of $n$ with parts in $S$. These are partitions of $n$ where each part is colored and the order of the colored parts  does not matter. Equivalently, $p_{S,r}(n)$ counts the number of $r$-component multipartitions of $n$ with parts in $S$. These are $r$-tuples of usual partitions that sum to $n$ in total.

We have
\begin{multline} \label{um}
  \log P_{S,r}(q)  =-r \sum_{j \in S} \log(1-q^j)
   = r \sum_{j \in S} \sum_{n=1}^\infty \frac{q^{nj}}{n} \\
   = r  \sum_{m=1}^\infty q^m \sum_{j \in S, \ j | m} \frac{j}{m} = r  \sum_{m=1}^\infty \sigma_S(m) \frac{q^m}{m}
   \qquad \text{for} \qquad \sigma_S(m):= \sum_{j \in S, \ j | m} j.
\end{multline}
Therefore $$\frac{d}{dq} \log P_{S,r}(q) = r  \sum_{m=1}^\infty \sigma_S(m) q^{m-1}$$
and we obtain from $P_{S,r}(q)$ the following symmetric triple.

\begin{eg}[Partitions, divisors] \label{parx}
\begin{equation}\label{xpadi}
  e_n= (-1)^n p_{S,-r}(n), \qquad  h_n= p_{S,r}(n), \qquad  p_n= r \cdot \sigma_S(n).
\end{equation}
\end{eg}

\begin{proof}[Discussion]
Some special cases of this may be examined more closely; see also \cite[Ex. 6]{Macd}, \cite[Exs. 5 - 9]{Sun05}.
These cases all have $S=\Z_{\gqs 1}$ and $S$ is omitted from the notation.
When  $r=1$ we find the symmetric triple
\begin{equation}\label{eule}
  e_n= (-1)^n c(n), \qquad  h_n= p(n), \qquad  p_n=  \sigma(n)
\end{equation}
from the introduction.
Recall Jacobi's identity, \cite[Eq. (10.4.9)]{AAR}:
\begin{equation} \label{ds}
  \prod_{j=1}^\infty (1-q^j)^3 =  \sum_{m=0}^\infty d(m) q^m, \qquad \text{with} \qquad
  d(m) := \begin{cases}
  (-1)^r(2r+1) & \text{ if $m=\frac{r(r+1)}{2}$ for  $r\in \Z_{\gqs 0}$},\\
  0 & \text{ otherwise. }
  \end{cases}
\end{equation}
Hence, for  $r=3$ we obtain the symmetric triple
\begin{equation}\label{eule2}
  e_n= (-1)^n d(n), \qquad  h_n= p_{3}(n), \qquad  p_n=  3\sigma(n).
\end{equation}
For  $r=24$ and Ramanujan's tau function $\tau(n)$, see \cite[Eq. (5.3)]{odm}, we find
\begin{equation}\label{eule3}
  e_n= (-1)^n \tau(n+1), \qquad  h_n= p_{24}(n), \qquad  p_n=  24\sigma(n).
\end{equation}
Some of the many identities for these last triples appear in section 5 of \cite{odm}.
\end{proof}

The transition formulas expressing $e_n$ in terms of $p_n$  above are a special cases of Bell's following result.

\begin{theorem}[Partition polynomials] \cite{Bell27} \label{bb27}
Fix   a positive integer $s$ and sets of positive integers $C_1$, $C_2, \dots, C_s$. Also fix complex numbers $a_1, \dots, a_s$ and $z_1, \dots, z_s$. Then we have the formal power series expansion
\begin{equation} \label{bepr}
  \prod_{j=1}^s \prod_{n_j \in C_j} (1-z_j \cdot q^{n_j})^{a_j} = \sum_{n=0}^\infty \Psi(n) q^n
\end{equation}
where
\begin{equation} \label{bepr2}
  \Psi(n) = \sum_{k=0}^n \frac{1}{k!} \dm_{n,k}\left(\frac{\psi(1)}{1}, \frac{\psi(2)}{2}, \dots  \right), \qquad
  \psi(n) = -\sum_{j=1}^s a_j \sum_{d \mid n, \ d\in C_j} d \cdot z_j^{n/d}.
\end{equation}
\end{theorem}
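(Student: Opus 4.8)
The plan is to carry out in full generality the logarithm-then-exponentiate argument that produced \e{bex3} in the introduction. First I would take the logarithm of the left side of \e{bepr}, turning the double product into a double sum,
\begin{equation*}
  \log \prod_{j=1}^s \prod_{n_j \in C_j} (1-z_j q^{n_j})^{a_j} = \sum_{j=1}^s a_j \sum_{n_j \in C_j} \log(1-z_j q^{n_j}),
\end{equation*}
and then expand each inner logarithm with $\log(1-w) = -\sum_{k\gqs 1} w^k/k$ to obtain the triple sum
\begin{equation*}
  -\sum_{j=1}^s a_j \sum_{n_j \in C_j} \sum_{k=1}^\infty \frac{z_j^k}{k} q^{n_j k}.
\end{equation*}

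The next step is to regroup this triple sum by powers of $q$. The coefficient of $q^m$ collects one contribution from each triple $(j,n_j,k)$ with $n_j k = m$; writing $d=n_j$, the constraints become $d \mid m$, $d \in C_j$, and $k = m/d$, so each term contributes $-a_j z_j^{m/d}/(m/d) = -a_j (d/m) z_j^{m/d}$. Factoring out $1/m$ and comparing with the definition of $\psi$ in \e{bepr2} yields
\begin{equation*}
  \log \prod_{j=1}^s \prod_{n_j \in C_j} (1-z_j q^{n_j})^{a_j} = \sum_{m=1}^\infty \frac{\psi(m)}{m} q^m.
\end{equation*}
Finally I would exponentiate. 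Setting $f(q):=\sum_{m\gqs 1}(\psi(m)/m)q^m$, which has zero constant term, the product equals $e^{f(q)}$, and \e{cox} with $\alpha=1$ extracts its coefficients as
\begin{equation*}
  [q^n]\, e^{f(q)} = \sum_{k=0}^n \frac{1}{k!}\dm_{n,k}\left(\frac{\psi(1)}{1},\frac{\psi(2)}{2}, \dots\right) = \Psi(n),
\end{equation*}
which is exactly the claim.

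I do not expect a serious obstacle; the single point that genuinely requires care is justifying the interchange of summation order when regrouping by powers of $q$. This is legitimate at the level of formal power series precisely because each fixed power $q^m$ is reached only through the finitely many factorizations $m = d\cdot k$ with $d \in C_j$ (using $d, k \gqs 1$), so no infinite rearrangement is involved. I would state this finiteness explicitly rather than leave it implicit, and note that the special cases \e{eule}, \e{eule2}, \e{eule3} of Example \ref{parx} are recovered by appropriate choices of $s$, the $C_j$, the $a_j$ and the $z_j$.
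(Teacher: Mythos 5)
Your proposal is correct and follows essentially the same route as the paper's own (very terse) proof: take the logarithm, expand as in \e{um} to identify the coefficient of $q^m$ as $\psi(m)/m$, and exponentiate via \e{cox}. Your explicit regrouping of the triple sum and the remark on the finiteness of the factorizations $m=d\cdot k$ simply fill in details the paper leaves implicit.
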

\begin{proof}
Take the logarithm of the product in \e{bepr} and expand as in \e{um}. Exponentiating with \e{cox} then completes the proof.
\end{proof}

Bell termed $\Psi(n)$, given in \e{bepr2}, the {\em partition polynomial of rank $n$ in the variables $z_1, \dots, z_s$ and associated to $C_1, \dots, C_s$ and $a_1, \dots, a_s$}. From \e{bell2} we see that $\Psi(n)$ is of degree at most $n$ when considered as a polynomial in each of $z_1, \dots, z_s$ or $a_1, \dots, a_s$ or $\psi(1), \dots, \psi(n)$.
Theorem \ref{bb27} shows that, for any identity of the form \e{bepr}, the coefficients $\Psi(n)$ can  be expressed as a polynomial of divisor sums.

As an example, take Ramanujan's famous identity \cite[Eq. (105.1)]{Ra}
\begin{equation}\label{ramj}
 5 \frac{(q^5;q^5)_\infty^5}{(q;q)_\infty^6} = \sum_{n=0}^\infty p(5n+4) q^n,
\end{equation}
showing that $5$ divides $p(5n+4)$. Applying the theorem with $C_1=\{5,10, \dots\}$, $C_2=\{1,2, \dots\}$, $a_1=5$, $a_2=-6$ and $z_1=z_2=1$ means
\begin{equation} \label{whe}
  \psi(n) = -5 \sum_{d | n,\ 5 | d} d +6 \sum_{d | n} d = \sigma(n) +5 \omega_5(n)
\end{equation}
where, in general,  $\omega_r(n)$ is defined to be the sum of all the divisors of $n$ that are not multiples of $r$. Then
\begin{equation}\label{pbe}
  p(5n+4) = 5 \sum_{k=0}^n \frac{1}{k!} \dm_{n,k}\left(\frac{\sigma(1) +5 \omega_5(1)}{1}, \frac{\sigma(2) +5 \omega_5(2)}{2}, \dots  \right).
\end{equation}
An equivalent formula to \e{pbe} is the main result of \cite{bo}.

For the corresponding symmetric triple, factor the reciprocal of the product in \e{ramj} into
\begin{equation} \label{ffx}
  \frac{(q;q)_\infty^6}{(q^5;q^5)_\infty^5} = (q;q)_\infty^3 \cdot (q;q)_\infty^3 \cdot \frac{1}{(q^5;q^5)_\infty^5}.
\end{equation}
The coefficients  do not seem to have been studied, but we may express them with \e{ffx}, \e{genpart} and \e{ds} as
\begin{equation} \label{ffx2}
  f(n):= [q^n] \frac{(q;q)_\infty^6}{(q^5;q^5)_\infty^5} =\sum_{i+j+5k=n} d(i) \cdot d(j) \cdot p_5(k).
\end{equation}

\begin{eg}[Partitions modulo $5$] 
\begin{equation*} 
  e_n= (-1)^n f(n), \qquad  h_n= p(5n+4)/5, \qquad  p_n= \sigma(n) +5 \omega_5(n).
\end{equation*}
\end{eg}

\subsection{Sums of squares or triangular numbers} \label{sq-tri}

With the theta function
 definition
\begin{equation}\label{the}
  \varphi(q):=\sum_{n \in \Z} q^{n^2}, \qquad \text{then} \qquad \varphi(-q)=\sum_{n \in \Z} (-1)^n q^{n^2}
  = \prod_{j=1}^\infty \frac{1-q^j}{1+q^j}
\end{equation}
by \e{jac} when $t\to q$, $q\to q^2$. We may set $E(q)=\varphi(q)$ and $H(q)=1/\varphi(-q)$. Directly as in \e{um}, or by Theorem \ref{bb27},
\begin{equation*}
  \log H(q) =  \sum_{m=1}^\infty (\sigma(m) +\omega_2(m))\frac{q^{m}}{m}
\end{equation*}
where $\omega_2(n)$ is the sum of the odd divisors of $n$. The series $H(q)$ is the generating function for the number $\overline p(n)$ of overpartitions of $n$. These are partitions where the first appearance of a part of each size may or may not be overlined.

\begin{eg}[Squares, overpartitions] \label{over}
For $n \gqs 1$,
\begin{equation}\label{exop}
  e_n = \begin{cases}
  2 & \text{ if }n=m^2;\\
  0 & \text{ if }n\neq m^2,
  \end{cases}
  \qquad h_n = \overline p(n), \qquad p_n =  \sigma(n) +\omega_2(n).
\end{equation}
\end{eg}

We may generalize Example \ref{over} by taking the $r$th power. Write
\begin{equation}\label{rpw}
  \varphi(q)^r =\sum_{n =0}^\infty N_r(n) q^{n}, \qquad \prod_{j=1}^\infty \left(\frac{1+q^j}{1-q^j}\right)^r =\sum_{n =0}^\infty \overline{p}_r(n) q^{n}
\end{equation}
for any $r$. When $r$ is a positive integer, $N_r(n)$ is the number of ways to express $n$ as a sum of squares of $r$ integers (including the order of the integers and their signs), while $\overline{p}_r(n)$ naturally counts $r$-colored overpartitions. These are $r$-colored partitions where the first appearance of a part of each size and color may be overlined.

\begin{eg}[Representations as sums of squares] \label{sqrs}
\begin{equation}\label{exrep}
  e_n = N_r(n),
  \qquad h_n = \overline{p}_r(n), \qquad p_n =  r(\sigma(n) +\omega_2(n)).
\end{equation}
\end{eg}
\begin{proof}[Discussion]
This is Example \ref{bell-eu2} again. See also \cite[Ex. 22]{Macd}, \cite[Ex. 10]{Sun05}.
By the transition formulas, we obtain \e{bex4} and
\begin{equation}
  N_r(n)  = \sum_{k=0}^{n} (-1)^{n-k}  \dm_{n,k}\left(\overline{p}_r(1),\overline{p}_r(2),\overline{p}_r(3), \dots \right). \label{repsq2}
\end{equation}
A nice simplification of \e{repsq2} is possible, which may also be inverted:

\begin{prop} \label{pj}
For $n \in \Z_{\gqs 0}$ and arbitrary $r$,
\begin{equation}\label{fs}
  N_r(n) = \sum_{j=0}^{n} (-1)^{n-j} \binom{n+1}{j+1} \overline{p}_{r j}(n),\qquad
  \overline{p}_{r}(n)  = \sum_{j=0}^{n} (-1)^{n-j} \binom{n+1}{j+1} N_{r j}(n).
\end{equation}
\end{prop}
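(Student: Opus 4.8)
The plan is to reduce both identities in \e{fs} to a single vanishing-finite-difference fact, by first exhibiting a duality between $N_r(n)$ and $\ov_r(n)$ in the color parameter $r$.

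First I would record that, for fixed $n$, both $N_s(n)=[q^n]\varphi(q)^s$ and $\ov_s(n)=[q^n]\prod_{j\gqs 1}\bigl((1+q^j)/(1-q^j)\bigr)^s$ are polynomials in $s$ of degree at most $n$. This is immediate from \e{pot}, since $\varphi(q)$ and $\prod_{j\gqs 1}(1+q^j)/(1-q^j)$ are formal series with constant term $1$; in particular $\ov_{rj}(n)$ and $N_{rj}(n)$ are unambiguously defined for every $j$. Next, by \e{the} and \e{rpw} the overpartition generating function equals $\varphi(-q)^{-s}$, so substituting $q\to -q$ into the expansion $\varphi(w)^{-s}=\sum_m N_{-s}(m)\,w^m$ yields the key duality
\[
  \ov_s(n) = (-1)^n N_{-s}(n), \qquad \text{equivalently} \qquad N_s(n) = (-1)^n \ov_{-s}(n).
\]

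Substituting the first of these into the right-hand side of the first identity in \e{fs}, the factor $(-1)^n$ combines with $(-1)^{n-j}$ to give $(-1)^{2n-j}=(-1)^{j}$, and the claim becomes the purely polynomial assertion
\[
  f(r) = \sum_{j=0}^n (-1)^{j} \binom{n+1}{j+1} f(-rj),
\]
for $f(s):=N_s(n)$. The second identity in \e{fs} reduces, via the dual relation $N_s(n)=(-1)^n\ov_{-s}(n)$, to the very same statement with $f$ replaced by $g(s):=\ov_s(n)$. Hence it suffices to prove the displayed identity for an arbitrary polynomial $f$ of degree at most $n$. For this I would fold $f(r)$ into the sum as a $j=-1$ term, noting $(-1)^{-1}\binom{n+1}{0}f(-r\cdot(-1))=-f(r)$, and reindex by $i=j+1$, turning the claim into
\[
  \sum_{i=0}^{n+1} (-1)^{i} \binom{n+1}{i} f\bigl(-r(i-1)\bigr) = 0.
\]
Here $i\mapsto f(-r(i-1))$ is a polynomial in $i$ of degree at most $n$, and by \e{difo} the left side is, up to sign, its $(n+1)$-st forward difference at $0$. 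Since each difference lowers the degree, the $(n+1)$-st difference of a degree-$\lqs n$ polynomial vanishes, exactly the mechanism already used after \e{dart}; this proves \e{fs}.

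The only genuine subtlety, and the step I would treat most carefully, is the bookkeeping in the duality $\ov_s(n)=(-1)^n N_{-s}(n)$: one must verify that the polynomial extension in the exponent is the legitimate one furnished by \e{pot}, so that $N_{-rj}(n)$ is the evaluation of that same degree-$n$ polynomial, and that the sign $(-1)^{2n-j}=(-1)^{j}$ is produced correctly. Everything beyond this is the standard annihilation of polynomials of bounded degree by high-order finite differences.
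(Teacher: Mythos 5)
Your proposal is correct, but it proves the proposition by a genuinely different route than the paper. The paper stays inside its De Moivre framework: it starts from the transition formula \e{repsq2}, evaluates $\dm_{n,k}\left(\overline{p}_r(1),\overline{p}_r(2),\dots\right)$ explicitly by first noting $\dm_{n,k}\left(1,\overline{p}_r(1),\dots\right)=\overline{p}_{rk}(n-k)$ and then stripping the leading $1$ with \e{add}, and finally swaps the resulting double sum and collapses it with a standard binomial identity; the second formula in \e{fs} is then ``proved the same way.'' You instead observe that $N_s(n)$ and $\overline{p}_s(n)$ are degree-$\lqs n$ polynomials in the color parameter $s$ (via \e{pot}), establish the duality $\overline{p}_s(n)=(-1)^n N_{-s}(n)$ from $\varphi(-q)^{-s}=\prod_{j}\left((1+q^j)/(1-q^j)\right)^s$, and thereby reduce \emph{both} identities in \e{fs} to the single statement that $\sum_{i=0}^{n+1}(-1)^i\binom{n+1}{i}f\bigl(-r(i-1)\bigr)=0$ for any polynomial $f$ of degree $\lqs n$, which is the vanishing of an $(n+1)$-st finite difference as in \e{difo}. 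I checked the sign bookkeeping: the substitution produces $(-1)^{2n-j}=(-1)^j$ as you claim, the $j=-1$ term folds in correctly as $-f(r)$, and $i\mapsto f(-r(i-1))$ indeed has degree $\lqs n$ in $i$, so the difference operator annihilates it. What your route buys is conceptual economy: it makes visible that the two formulas in \e{fs} are literally the same polynomial identity under the $s\mapsto -s$, $q\mapsto -q$ duality (explaining the symmetry the paper only asserts), it avoids the De Moivre manipulations entirely, and it reuses the exact mechanism the paper already deployed after \e{dart}. What the paper's route buys is thematic coherence --- \e{fs} appears there as a direct simplification of the symmetric-triple transition formula \e{repsq2}, which is the organizing principle of the article --- and it does not require identifying the duality between the two generating functions.
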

\begin{proof}
We have
\begin{equation*}
  \dm_{n,k}\left(1,\overline{p}_r(1),\overline{p}_r(2),\overline{p}_r(3), \cdots \right) = [q^n] q^k
  \prod_{j=1}^\infty \left(\frac{1+q^j}{1-q^j}\right)^{r k}
  = \overline{p}_{rk}(n-k).
\end{equation*}
Hence, by \e{add},
\begin{align*}
  \dm_{n,k}(\overline{p}_r(1),\overline{p}_r(2), \dots) & = \sum_{j=0}^k (-1)^{k-j} \binom{k}{j} \dm_{n+j,j}(1,\overline{p}_r(1),\overline{p}_r(2), \dots)\\
  & = \sum_{j=0}^k (-1)^{k-j} \binom{k}{j} \overline{p}_{rk}(n-k).
\end{align*}
Using this in \e{repsq2} and simplifying with an elementary binomial identity (\cite[p. 171]{Knu}) gives the
left formula in \e{fs}. The right one is proved the same way.
\end{proof}

The $r=1$ case of the right formula of Proposition \ref{pj} has recently appeared in \cite{Jha21} giving a simple link between overpartitions and representations as sums of squares:
\begin{equation}\label{ssq}
  \overline{p}(n)  = \sum_{j=0}^{n} (-1)^{n-j} \binom{n+1}{j+1} N_{j}(n).
\end{equation}
There are formulas for $\sigma(n) +\omega_2(n)$ of the same kind as \e{fs}, (see \e{jjx}). An example of a convolution identity here is
\begin{equation} \label{cvi}
  r(\sigma(n) +\omega_2(n)) = \sum_{j=1}^n (-1)^{j-1} j \cdot N_r(j)  \cdot \overline{p}_{r}(n-j), 
\end{equation}
from \e{peh}.
\end{proof}

Next, take $\psi(q):= \sum_{n=0}^\infty q^{n(n+1)/2}$ with
\begin{equation*}
    \frac 1{\psi(-q)}=
   \prod_{j=1}^\infty \frac{1+q^{2j-1}}{1-q^{2j}} =
   \prod_{j=1}^\infty \frac{1}{(1-q^{4j})(1-q^{2j-1}}
    = \prod_{j\gqs 1, \ j \not\equiv 2 \bmod 4} \frac{1}{1-q^{j}}.
\end{equation*}
Let $\hat p(n)$ be the number of partitions of $n$ where no parts have size $\equiv 2 \bmod 4$, and recall $\omega_r(n)$ defined after \e{whe}. Then similarly to Example \ref{over} we find the symmetric triple:

\begin{eg}[Triangular numbers] \label{trix}
\begin{equation}\label{extrx}
  e_n = \begin{cases}
  1 & \text{ if $n=\frac{m(m+1)}{2}$};\\
  0 & \text{ otherwise},
  \end{cases}
  \qquad h_n = \hat p(n), \qquad p_n =  \sigma(n) + \omega_2(n) - \omega_4(n).
\end{equation}
\end{eg}

Raising this to the power $r$, write
\begin{equation}\label{trp}
  \psi(q)^r =\sum_{n =0}^\infty \Delta_r(n) q^{n}, \qquad \prod_{j\gqs 1, \ j \not\equiv 2 \bmod 4} \frac{1}{(1-q^{j})^r} =\sum_{n =0}^\infty \hat{p}_r(n) q^{n}.
\end{equation}
When $r$ is a positive integer, $\Delta_r(n)$ is the number of ways to express $n$ as a sum of triangular numbers (including the order of the numbers), while $\hat{p}_r(n)$ gives an $r$-colored version of $\hat{p}(n)$.

\begin{eg}[Representations as sums of triangular numbers] \label{angx}
\begin{equation}\label{exrep6}
  e_n = \Delta_r(n),
  \qquad h_n = \hat{p}_r(n), \qquad p_n =  r (\sigma(n) + \omega_2(n) - \omega_4(n)).
\end{equation}
\end{eg}

\begin{proof}[Discussion]
Analogs of all the identities we saw for Example \ref{sqrs} are available here too. For example
\begin{equation}\label{jjx}
  \frac{r}n(\sigma(n) + \omega_2(n) - \omega_4(n)) =\sum_{j=1}^n \frac{(-1)^{n-j}}j \binom{n}{j} \Delta_{r j}(n),
\end{equation}
and this one, for $r=1$, is equivalent to the main result in \cite{Jhatri}. Several of Jha's recent papers can be organized and understood by referring to Examples \ref{parx}, \ref{sqrs} and \ref{angx}.
It is also interesting to consider generalizations of the last  examples, where the  squares and triangular numbers are replaced by other sets of integers $S$.
\end{proof}

\subsection{Further identities}

We may give a De Moivre polynomial version of the $q$-binomial theorem.
Utilizing \e{mov} shows that
\begin{equation*}
  \log \frac{(a z;q)_\infty}{(z;q)_\infty} = \sum_{m=1}^\infty \frac{1-a^m}{m(1-q^m)} z^m,
\end{equation*}
and so Theorem \ref{qbin} is equivalent to
\begin{equation}\label{qid}
  \sum_{k=0}^m \frac{1}{k!} \dm_{m,k}\left(\frac{1-a}{1(1-q)}, \frac{1-a^2}{2(1-q^2)}, \frac{1-a^3}{3(1-q^3)}, \dots \right) = \frac{(a;q)_m}{(q;q)_m}.
\end{equation}
Special cases of this are, for $m\gqs 0$,
\begin{align}\label{cau}
  \sum_{k=0}^m \frac{1}{k!} \dm_{m,k}\left(\frac 1{1}, \frac 1{2}, \frac 1{3}, \dots \right) & = 1, \\
  \sum_{k=0}^m \frac{1}{k!} \dm_{m,k}\left(\frac{1-a}1, \frac{1-a^2}2, \frac{1-a^3}3, \dots \right) & = 1-a \qquad (m\gqs 1), \label{cau2}\\
  \sum_{k=0}^m \frac{1}{k!} \dm_{m,k}\left(\frac{1}{1(1-q)}, \frac{1}{2(1-q^2)}, \frac{1}{3(1-q^3)}, \dots \right) & = \frac{1}{(q;q)_m}, \label{cau3}
\end{align}
where \e{cau} is a result of Cauchy and \e{cau2} is \cite[Eq. (11)]{Mo71}.
The right side of \e{cau3} is the generating function for partitions into at most $m$ parts and MacMahon used \e{cau3} to simplify expressions for them \cite[Vol. 2, p. 62]{Macm}.
Similar identities to \e{cau} arise from the following symmetric triple, as in \cite[Ex. 13]{Macd}.

\begin{eg}[Schur's identity and generalizations] For $r$ a positive integer,
$$
\left( \frac{(1+t)^r}{1-(-t)^r},   \  \frac{1-t^r}{(1-t)^r},   \  \frac{r}{1-t}-\frac{r t^{r-1}}{1-t^r}\right) \qquad \text{with} \qquad p_n =  \begin{cases}
  0 & \text{ if }r \mid n;\\
  r & \text{ if }r\nmid n.
  \end{cases}
$$
\end{eg}

\begin{proof}[Discussion]
The binomial theorem  implies for $n\gqs 1$
\begin{equation}\label{mor}
  e_n =  (-1)^{r\lfloor n/r \rfloor} \binom{r}{n-r \lfloor n/r \rfloor} + \begin{cases}
  (-1)^{n-r} & \text{ if }r \mid n;\\
  0 & \text{ if }r\nmid n,
  \end{cases}
  \qquad h_n = \binom{n+r-1}{r-1}-\binom{n-1}{r-1}.
\end{equation}
By the transition formula $h_k \leftrightarrow p_k$,
\begin{equation}\label{mor2}
  \sum_{k=0}^n \frac{r^k}{k!} \dm_{n,k}\left(\frac 1{1}, \frac 1{2}, \dots, \frac 1{r-1},0, \frac 1{r+1}, \dots\right)  = \binom{n+r-1}{r-1}-\binom{n-1}{r-1},
\end{equation}
where the entries in $ \dm_{n,k}$ at a multiple of $r$ are set to zero. Formula \e{mor2}  is due to Morris, \cite[Eq. (13)]{Mo71}, generalizing the $r=2$ case which is an identity of Schur:
\begin{equation*}
  \sum_{k=0}^n \frac{2^k}{k!} \dm_{n,k}\left(\frac 1{1}, 0, \frac 1{3}, 0, \frac 1{5},0, \dots \right)  = 2, \qquad (n\gqs 1).
  \qedhere
\end{equation*}
\end{proof}

For another interesting case with rational functions, take $E(t)$ to be a quadratic polynomial.
\begin{eg}[Lucas sequences] \label{luc}
$$
\left( 1+\alpha t +\beta t^2,   \  \frac{1}{1-\alpha t +\beta t^2},   \  \frac{\alpha -2 \beta t}{1-\alpha t +\beta t^2}\right).
$$
\end{eg}
\begin{proof}[Discussion]
Then $h_n$ and $p_n$ give the usual basis for Lucas sequences: any sequence $\ell_n$ satisfying the recursion $\ell_{n+1} = \alpha \ell_n -\beta \ell_{n-1}$ must equal $A \cdot h_n+B  \cdot p_{n+1}$ for some fixed $A$ and $B$. Initial conditions for $h_n$ and $p_n$ can be taken to be $h_{-1}=0$, $h_0=1$, $p_0=2$ and $p_1=\alpha$.
The general theory is described in \cite[Sect. 7.5]{chacha}.

By the $h_n \leftrightarrow e_n$ transition formula, 
\begin{equation*}
  h_n = \sum_{k=0}^n (-1)^{n-k} \dm_{n,k}(\alpha,\beta,0,0, \dots) = \sum_{k=0}^n \binom{k}{n-k} \alpha^{2k-n} (-\beta)^{n-k},
\end{equation*}
with a similar formula for $p_n$. See \cite[Ex. 15]{Sun05} for more on the divisibility of such sequences.
\end{proof}

Chebyshev polynomials of the first and second kind, $T_n(x)$ and $U_n(x)$ respectively, appear in the special case $\alpha=2x$, $\beta=1$ of Example \ref{luc}. See also \cite[Sect. 5]{odm} and \cite[Exer. 3.16]{MR15}.

\begin{eg}[Chebyshev polynomials $T_n(x)$, $U_n(x)$] \label{cheb}
$$
E(t)=1+2x t+t^2,  \qquad h_n=U_n(x),  \qquad p_n=2T_n(x).
$$
\end{eg}

\section{Compositional inverses} \label{cinv}

Let $f(t)$ be a formal power series with a  constant term $c$ that is invertible in our ring of coefficients $R$. Though $f(t)$ may not have a compositional inverse,  $F(t):=t f(t)$ must have one: $G(x)=F(x)^{\langle -1\rangle}$ so that  $G(x)=t \iff F(t)=x$. See \cite[Prop. 3.8]{odm}, for example. Then $f^*(t):=G(t)/t$ is a power series with constant term $1/c$. Following \cite[Ex. 24]{Macd}, this defines the $*$ operation on power series with invertible constant terms:
\begin{equation}\label{star}
  f^*(t):=(t f(t))^{\langle -1\rangle}/t.
\end{equation}

 If $(E(t),H(t),P(t))$ is a symmetric triple, then $H^*(t)$ is a series with constant term $1$ and we obtain the 'starred' symmetric triple
\begin{equation}\label{invh}
  (E,H,P)^*:=\left(E^\circ(t),H^*(t),P^\circ(t)\right) \qquad \text{for} \qquad E^\circ(t):=\frac{1}{H^*(-t)}, \quad P^\circ(t):= \frac d{dt}\log H^*(t).
\end{equation}
Writing the coefficients of the series in \e{invh} as $e_n^\circ$, $h_n^*$ and $p_n^\circ$, we
find, as the special case $\alpha=\beta=-1$ of Corollary \ref{newtrip}, the remarkably similar formulas for $n\gqs 1$:
\begin{equation}\label{rem}
  (-1)^{n-1} e_n^\circ = [t^n] \frac{H(t)^{-(n-1)}}{n-1},
  \qquad
  h_n^* = [t^n] \frac{H(t)^{-(n+1)}}{n+1},
  \qquad
  \frac{p_n^\circ}{n} = [t^n] \frac{H(t)^{-n}}{n}.
\end{equation}
Applying the $*$ operation again also shows that
\begin{equation}\label{rem2}
  (-1)^{n-1} e_n = [t^n] \frac{H^*(t)^{-(n-1)}}{n-1},
  \qquad
  h_n = [t^n] \frac{H^*(t)^{-(n+1)}}{n+1},
  \qquad
  \frac{p_n}{n} = [t^n] \frac{H^*(t)^{-n}}{n}.
\end{equation}
The formulas for $e_1^\circ$ and $e_1$ above should be interpreted using \e{zr}. More simply, with \e{ones} they can be replaced by $h_1^*$ and $h_1$ respectively.

Then \e{rem} can be used to relate $e_n^\circ$, $h_n^*$ and $p_n^\circ$ back to the original $e_n$, $h_n$ and $p_n$.
To do this, express $H$ in terms of $E$ or $P$ as in the proof of Theorem \ref{tran} and then expand using Proposition \ref{comp}. This yields the following, as a De Moivre polynomial version of \cite[Ex. 24]{Macd}.

\begin{prop} \label{invsy}
For $n \gqs 1$,
\begin{gather*}\label{ese}
e_n^\circ =  \frac{-1}{n-1} \sum_{k=0}^n \binom{n-1}{k} \dm_{n,k}(e_1,e_2,\dots),
\qquad
  h_n^* = \frac{1}{n+1} \sum_{k=0}^n \binom{-n-1}{k} \dm_{n,k}(h_1,h_2, \dots), \\
  p_n^\circ =  \sum_{k=0}^n \frac{(-n)^k}{k!} \dm_{n,k}\left(\frac{p_1}{1},\frac{p_2}{2},\dots \right). \label{psp}
\end{gather*}
\end{prop}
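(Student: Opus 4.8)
The plan is to start from the three coefficient-extraction identities in \eqref{rem}, which already present the starred quantities as $[t^n]$ of a power of $H(t)$, and to expand each such power with the composition formulas of Proposition \ref{comp}. The single preparatory move, exactly as in the proof of Theorem \ref{tran}, is to write the relevant power of $H$ in one of the standard shapes $(1+f(t))^\alpha$ or $e^{\alpha f(t)}$ with $f$ a series starting at $t^1$, so that \eqref{pot} or \eqref{cox} applies verbatim.

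The middle formula is the cleanest: since $H(t)=1+\sum_{m\gqs 1}h_m t^m$, formula \eqref{pot} with $\alpha=-(n+1)$ gives $[t^n]H(t)^{-(n+1)}=\sum_{k=0}^n \binom{-n-1}{k}\dm_{n,k}(h_1,h_2,\dots)$, and dividing by $n+1$ using \eqref{rem} is the asserted expression for $h_n^*$. For $e_n^\circ$ I would first replace $H(t)^{-1}$ by $E(-t)$ via \eqref{tub}, so that $H(t)^{-(n-1)}=E(-t)^{n-1}=(1+f(t))^{n-1}$ where $f(t)=\sum_{m\gqs 1}(-1)^m e_m t^m$. Then \eqref{pot} with $\alpha=n-1$ produces $\dm_{n,k}(-e_1,e_2,-e_3,\dots)$, which \eqref{muln} with $c=-1$ collapses to $(-1)^n\dm_{n,k}(e_1,e_2,\dots)$; carrying the leftover $(-1)^{n-1}$ from \eqref{rem} across then leaves precisely the factor $-1/(n-1)$. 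For $p_n^\circ$ I would instead use the exponential description $H(t)=\exp\bigl(\sum_{r\gqs 1}(p_r/r)t^r\bigr)$ from \eqref{dep}, whence $H(t)^{-n}=e^{-n f(t)}$ with $f(t)=\sum_{r\gqs 1}(p_r/r)t^r$, and \eqref{cox} with $\alpha=-n$ yields $\sum_{k=0}^n \frac{(-n)^k}{k!}\dm_{n,k}(p_1/1,p_2/2,\dots)$ directly after clearing the factor $n$ in \eqref{rem}.

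No step is genuinely hard; the computation is essentially a dictionary lookup into Proposition \ref{comp}. The one point demanding care is the sign bookkeeping in the $e_n^\circ$ case, where the alternating coefficients of $E(-t)$ must be reconciled with the $(-1)^{n-1}$ already present in \eqref{rem} through a clean application of \eqref{muln}. The remaining subtlety is purely notational: the factor $1/(n-1)$ is undefined at $n=1$, and there the $e_1^\circ$ formula is to be read through the convention noted after \eqref{rem2}, consistent with $e_1^\circ=h_1^*$ from \eqref{ones}.
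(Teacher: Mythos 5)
Your proposal is correct and follows essentially the same route as the paper, which proves the proposition by taking the coefficient-extraction identities \e{rem}, rewriting the powers of $H(t)$ via $H(t)^{-1}=E(-t)$ and $H(t)=\exp\bigl(\sum_{r\gqs 1}(p_r/r)t^r\bigr)$ exactly as in the proof of Theorem \ref{tran}, and expanding with Proposition \ref{comp} (i.e.\ \e{pot} and \e{cox}). Your sign bookkeeping via \e{muln} in the $e_n^\circ$ case and the remark about the $n=1$ convention both match the paper's treatment.
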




\begin{eg}[The logarithm] \label{logx}
$$
\left( \frac{t}{\log(1+t)},  \ \frac{\log(1-t)}{-t},  \ -\frac{1}{t}-\frac{1}{(1-t)\log(1-t)}\right),
$$
with
\begin{equation} \label{sgns}
  e_n=\int_0^1 \binom{x}{n} \, dx, \qquad h_n=\frac{1}{n+1}, \qquad p_n=(-1)^n \int_0^1 \binom{-x}{n}\, dx.
\end{equation}
\end{eg} 

\begin{proof}[Discussion]
This example comes from applying \e{macd1} to the Bernoulli Example \ref{nou}, so that $H(t)$ becomes $(e^{-t}-1)/(-t)$, and then applying the $*$ operation to find $H^*(t)=-\log(1-t)/t$ directly. The formulas for $e_n$ and $p_n$ are stated in \cite[p. 293]{Comtet} and a nice exercise.

We may set $\mathcal{C}^-_n := n! e_n$ and $\mathcal{C}^+_n:= n! p_n$ where the signs indicate the falling and rising factorials in \e{sgns}. Among other names, $\mathcal{C}^-_n$ and $\mathcal{C}^+_n$ are known as  {\em Cauchy numbers}; \cite{Bla} has more information about them and their history. The  transition formulas give the simple identities
\begin{equation}\label{sib}
  \mathcal{C}^-_n =n! \sum_{k=0}^n (-1)^{n-k} \dm_{n,k}\left( \frac{1}{2}, \frac{1}{3}, \dots \right), \qquad
  \mathcal{C}^+_n =n! \sum_{k=1}^n \frac{(-1)^{k-1}}{k} \dm_{n,k}\left( \frac{1}{2}, \frac{1}{3}, \dots \right),
\end{equation}
which may also be written with Stirling cycle numbers using \e{jf2}. We will later see the N\"{o}rlund polynomial expressions
\begin{equation}\label{logb}
  \mathcal{C}^-_n = -\frac{1}{n-1} B_n^{(n-1)}, \qquad \mathcal{C}^+_n = (-1)^n  B_n^{(n)}.
\end{equation}
\end{proof}

\begin{eg}[Tangent, inverse]
$$
\left( \frac{t}{\arctan t},  \ \frac{\arctan t}{t},  \ -\frac{1}{t}+\frac{1}{(1+t^2)\arctan t}\right),
$$
after applying $*$ to Example \ref{tg}, with $h_{2n}=(-1)^n/(2n+1)$ and $h_{2n-1}=0$.
\end{eg}

\begin{eg}[Hermite polynomial, inverse] For $n\gqs 1$,
\begin{equation}\label{ihh}
  e_n = - \frac{\mathcal{H}_n \left(i x \sqrt{n-1} \right)}{(n-1)^{1-n/2} i^n n!},
  \quad
  h_n =  \frac{\mathcal{H}_n \left(i x \sqrt{n+1} \right)}{(n+1)^{1-n/2}(-i)^n n!},
  \quad
  p_n =  \frac{\mathcal{H}_n \left(i x \sqrt{n} \right)}{n^{-n/2}(-i)^n n!}.
\end{equation}
\end{eg}
\begin{proof}[Discussion]
(To find $e_1$ use $h_1$ instead, as $e_1=h_1$.) This is the result of applying $*$ to  Example \ref{herm}. Letting $t\to i\sqrt{\alpha} t$ and $x\to i\sqrt{\alpha} x$ for $\alpha \gqs 0$ means
\begin{equation*}
  e^{2xt - t^2} = \sum_{n=0}^\infty \frac{\mathcal{H}_n(x)}{n!} t^n \implies [t^n] e^{-\alpha (2xt -  t^2)} = (i\sqrt{\alpha})^n \frac{\mathcal{H}_n(i x \sqrt{\alpha})}{n!},
\end{equation*}
and we can explicitly compute \e{rem} for $H(t)=e^{2xt - t^2}$ to give \e{ihh}.
\end{proof}

\section{Combining powers and inverses of series}

\subsection{General series}

The results in this section were inspired by Example 25 of \cite{Macd} as well as the generalized series in \cite[Sect. 5.4]{Knu}.

\begin{adef} \label{lop}
{\rm Let $\rho(t)=1+\rho_1 t+ \rho_2 t^2+ \cdots$ be a power series  and write $D_n(\alpha):=[t^n]\rho(t)^\alpha$. For all $\alpha$, define the {\em general series associated to} $\rho$ as
\begin{equation}\label{lmb}
  \mathcal{Q}_{\rho,\alpha}(t) = \mathcal{Q}_\alpha(t) := 1+\sum_{n=1}^\infty \frac{ D_n(\alpha n + 1)}{\alpha n + 1} t^n.
\end{equation}
}
\end{adef}

Clearly $\mathcal{Q}_0(t) = \rho(t)$. Note that $\rho(t)$ and $\mathcal{Q}_\alpha(t)$ are formal power series for now and we may not have convergence for any $t$.  The denominator $\alpha n + 1$ can be zero in \e{lmb} without causing an issue: for $n\gqs 1$,
\begin{equation} \label{zr}
  D_n(\beta)=\sum_{k=1}^n \binom{\beta}{k} \dm_{n,k}(\rho_1, \rho_2, \dots) = \beta \sum_{k=1}^n \frac 1k \binom{\beta-1}{k-1} \dm_{n,k}(\rho_1, \rho_2, \dots)
\end{equation}
so that $D_n(\beta)/\beta$ is  well-defined for all $\beta$ when $n\gqs 1$.


Some remarkable properties of these general series are given next. Recall the $*$ operation from \e{star}, related to the compositional inverse.

\begin{theorem} \label{general}
Let $\mathcal{Q}_\alpha(t)=\mathcal{Q}_{\rho,\alpha}(t)$ be the general series associated to  a power series  $\rho$ with constant term $1$. Then  it satisfies
\begin{equation}\label{eqsat}
  \mathcal{Q}_\alpha(t)  =  \rho\left(t \mathcal{Q}_\alpha(t)^\alpha \right).
\end{equation}
For  all  $\alpha$ and $\beta$ we have
\begin{align} \label{cc}
  \mathcal{Q}_\alpha(t)^\beta & = 1+\sum_{n=1}^\infty \frac{\beta}{\alpha n + \beta} D_n(\alpha n + \beta) \cdot t^n,
  \\
  \mathcal{Q}_\alpha(t)^\beta
  \left( 1+\alpha t \frac{\mathcal{Q}'_\alpha(t)}{\mathcal{Q}_\alpha(t)}\right)
  & = 1+\sum_{n=1}^\infty D_n(\alpha n + \beta) \cdot  t^n, \label{cc2}
  \\
  \left( \mathcal{Q}_\alpha(t)^\beta \right)^* & = \mathcal{Q}_{\alpha-\beta}(t)^{-\beta}. \label{invst}
\end{align}
\end{theorem}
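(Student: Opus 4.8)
The plan is to unwind the definition \e{star} of the $*$ operation and reduce the identity \e{invst} to a single composition identity, which then follows from the functional equation \e{eqsat}. Write $w:=\mathcal{Q}_{\alpha-\beta}(t)$. By \e{star}, asserting $\left(\mathcal{Q}_\alpha^\beta\right)^*=g$ is the same as asserting that $t\,g(t)$ is the compositional inverse of $\Psi(x):=x\,\mathcal{Q}_\alpha(x)^\beta$. Taking the candidate $g=\mathcal{Q}_{\alpha-\beta}^{-\beta}$, so that $t\,g(t)=t\,w^{-\beta}$, it suffices to verify $\Psi\!\left(t\,w^{-\beta}\right)=t$. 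Both $\Psi(x)=x+O(x^2)$ and $t\,w^{-\beta}=t+O(t^2)$ are power series of order exactly $1$, so $\Psi$ has a unique two-sided compositional inverse and any right inverse coincides with it; hence this single identity is all that is required.

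Expanding, $\Psi\!\left(t\,w^{-\beta}\right)=t\,w^{-\beta}\,\mathcal{Q}_\alpha\!\left(t\,w^{-\beta}\right)^\beta$, so the whole statement reduces to the key identity
\begin{equation*}
  \mathcal{Q}_\alpha\!\left(t\,w^{-\beta}\right)=w \qquad\text{where}\qquad w=\mathcal{Q}_{\alpha-\beta}(t). \tag{$\star$}
\end{equation*}
Granting $(\star)$ we get $\Psi\!\left(t\,w^{-\beta}\right)=t\,w^{-\beta}\,w^\beta=t$, as needed. Notice that $(\star)$ itself involves no fractional power of $\mathcal{Q}_\alpha$, which keeps the reduction clean and avoids any discussion of uniqueness of $\beta$-th roots.

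To prove $(\star)$ I would invoke \e{eqsat} twice. Set $s:=t\,w^{-\beta}$, a series of order $1$. Applying \e{eqsat} to $\mathcal{Q}_{\alpha-\beta}$ gives $w=\rho\!\left(t\,w^{\alpha-\beta}\right)$, and since $t\,w^{\alpha-\beta}=s\,w^\alpha$ this reads $w=\rho\!\left(s\,w^\alpha\right)$; thus $Y=w$ solves the functional equation $Y=\rho(s\,Y^\alpha)$ as a power series in $t$. On the other hand, substituting the order-$1$ series $s$ for the variable in \e{eqsat} for $\mathcal{Q}_\alpha$ yields $\mathcal{Q}_\alpha(s)=\rho\!\left(s\,\mathcal{Q}_\alpha(s)^\alpha\right)$, so $Y=\mathcal{Q}_\alpha(s)$ solves the same equation $Y=\rho(s\,Y^\alpha)$ with the same fixed $s$. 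Both candidate solutions have constant term $1$, so $(\star)$ follows once I establish that this functional equation has a unique such solution.

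That uniqueness statement is the one point needing care, and is the main obstacle. Writing $Y=1+\sum_{n\ge1}y_n t^n$, the factor $s=t+O(t^2)$ appearing in $s\,Y^\alpha$ lowers degrees so that the coefficient of $t^n$ in $\rho(s\,Y^\alpha)$ depends only on $y_1,\dots,y_{n-1}$; hence the relation $y_n=[t^n]\rho(s\,Y^\alpha)$ determines each $y_n$ recursively, forcing uniqueness. Here I must also confirm that the compositions $\mathcal{Q}_\alpha(s)$ and $\rho(s\,Y^\alpha)$ are legitimate formal operations, which holds precisely because $s$ has zero constant term. With uniqueness in hand, $\mathcal{Q}_\alpha(s)=w$, and tracing back through the two reductions gives \e{invst}. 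The case $\beta=0$ is trivial, since both sides then equal $1$, so there is no separate issue with vanishing denominators.
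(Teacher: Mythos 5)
There is a genuine gap: Theorem \ref{general} makes four assertions --- \e{eqsat}, \e{cc}, \e{cc2} and \e{invst} --- and your proposal proves only the last one, while taking the first one as an input. The functional equation \e{eqsat} is itself part of what must be proved: Definition \ref{lop} gives $\mathcal{Q}_\alpha(t)$ purely by the explicit coefficient formula $[t^n]\mathcal{Q}_\alpha = D_n(\alpha n+1)/(\alpha n+1)$, and the real work of the theorem is to connect that formula to the functional-equation/compositional-inverse picture. The paper does this by setting $\mathcal{Q}_\alpha(t):=\rho(G(t))$ with $G$ the compositional inverse of $t\rho(t)^{-\alpha}$ (which makes \e{eqsat} immediate) and then using Lagrange inversion (Corollary \ref{licor}) to compute $[t^n]\mathcal{Q}_\alpha(t)^\beta$ and verify it equals $\frac{\beta}{\alpha n+\beta}D_n(\alpha n+\beta)$, recovering Definition \ref{lop} at $\beta=1$; the companion computation gives \e{cc2}. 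None of that appears in your argument, so as written you have not shown that the series of Definition \ref{lop} satisfies \e{eqsat}, nor established either coefficient identity.

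The part you do prove is correct and is a genuinely different route to \e{invst} than the paper's. The paper conjugates: it writes $t\,\mathcal{Q}_\alpha(t)^\beta=F_{\beta-\alpha}(G_{-\alpha}(t))$ with $F_u(t)=t\rho(t)^u$ and inverts the composition explicitly. You instead exhibit $t\,\mathcal{Q}_{\alpha-\beta}(t)^{-\beta}$ as a right inverse of $x\mapsto x\,\mathcal{Q}_\alpha(x)^\beta$ and reduce everything to the clean identity $(\star)$, which you settle by showing that $Y=\rho(sY^\alpha)$ has a unique power-series solution with constant term $1$ (the degree-lowering argument for uniqueness is sound, since $s$ has order one). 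That fixed-point/uniqueness argument is a nice way to avoid the bookkeeping with $F_u$ and $G_u$, and would be a legitimate replacement for the last third of the paper's proof --- but only once \e{eqsat}, \e{cc} and \e{cc2} have been established by some version of the Lagrange-inversion computation.
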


\begin{cor}
Let $\rho(t)$ be a power series  with constant term $1$. The two-parameter family $\mathcal{Q}_{\rho,\alpha}(t)^\beta$ contains all possible series obtained from $\rho(t)$ by repeatedly taking powers  and applying the $*$ operator.
\end{cor}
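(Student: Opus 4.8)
The plan is to read the statement as a closure property. Setting $\mathcal{F} := \{\mathcal{Q}_{\rho,\alpha}(t)^\beta : \alpha,\beta \text{ arbitrary constants}\}$, I want to show that $\mathcal{F}$ contains $\rho$ and is stable under each of the two generating operations, namely raising to an arbitrary power and applying the $*$ operator. A routine induction on the number of operations applied then shows that every series reachable from $\rho$ stays inside $\mathcal{F}$, which is exactly the assertion.

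First I would note that $\rho = \mathcal{Q}_0(t)^1$ lies in $\mathcal{F}$, using $\mathcal{Q}_0(t)=\rho(t)$ from the remark following Definition \ref{lop}; this supplies the seed. Next, every member $\mathcal{Q}_\alpha(t)^\beta$ has constant term $1$ (since $\mathcal{Q}_\alpha$ does, by \e{lmb}), so raising it to any power $\gamma$ is well defined through the binomial expansion underlying Proposition \ref{comp}, and the power-of-a-power rule gives
\[
  \left(\mathcal{Q}_\alpha(t)^\beta\right)^\gamma = \mathcal{Q}_\alpha(t)^{\beta\gamma},
\]
which is again in $\mathcal{F}$ with the same $\alpha$ and new exponent $\beta\gamma$. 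Because the constant term is the invertible element $1$, the $*$ operation of \e{star} also applies, and this is where the genuine content sits: by \e{invst} of Theorem \ref{general},
\[
  \left(\mathcal{Q}_\alpha(t)^\beta\right)^* = \mathcal{Q}_{\alpha-\beta}(t)^{-\beta},
\]
so the output is once more in $\mathcal{F}$, with parameters $\alpha-\beta$ and $-\beta$. Hence $\mathcal{F}$ is closed under both operations.

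The point I would emphasize is that neither operation escapes the two-parameter family: taking powers moves only within a fixed $\alpha$-slice via $(\alpha,\beta)\mapsto(\alpha,\beta\gamma)$, while $*$ implements the shift $(\alpha,\beta)\mapsto(\alpha-\beta,-\beta)$, and $\rho$ itself is the $\alpha=0$, $\beta=1$ seed. Combining these closure facts with a trivial induction on the length of the word of operations applied to $\rho$ completes the argument. I do not expect any real obstacle here beyond having \e{invst} already established; essentially all of the work has been done in Theorem \ref{general}, and the corollary is its immediate structural consequence.
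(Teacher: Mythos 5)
Your argument is correct and matches the paper's intent exactly: the paper leaves the corollary without a formal proof, but the illustrative identities in \e{muk} are generated by precisely the two closure rules you isolate, namely $(\alpha,\beta)\mapsto(\alpha,\beta\gamma)$ for powers and $(\alpha,\beta)\mapsto(\alpha-\beta,-\beta)$ for the $*$ operation via \e{invst}, starting from the seed $\rho=\mathcal{Q}_0^1$. The induction on the length of the word of operations is the right way to make this precise, and nothing further is needed.
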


For example, with $\mathcal{Q}_{\alpha}=\mathcal{Q}_{\rho,\alpha}$,
\begin{equation}\label{muk}
  \rho= \mathcal{Q}_{0}, \qquad \rho^\beta = \mathcal{Q}_{0}^\beta, \qquad \rho^* = \mathcal{Q}_{-1}^{-1},
\qquad (\rho^\beta)^* = \mathcal{Q}_{-\beta}^{-\beta},  \qquad (((\rho^\beta)^*)^\alpha)^* = \mathcal{Q}_{\beta(\alpha-1)}^{\beta \alpha}.
\end{equation}

\begin{cor}\label{newtrip}
Let $\rho(t)$ be a power series  with 
associated general series $\mathcal{Q}_\alpha(t)$. Then
\begin{equation*}
  \left(\mathcal{Q}_\alpha(-t)^{-\beta}, \ \mathcal{Q}_\alpha(t)^\beta, \ \beta \mathcal{Q}'_\alpha(t)/\mathcal{Q}_\alpha(t) \right)
\end{equation*}
is a symmetric triple, and for $n\gqs 1$,
\begin{equation} \label{hu}
  e_n= \frac{(-1)^n(-\beta)}{\alpha n - \beta} D_n(\alpha n - \beta), \quad h_n = \frac{\beta}{\alpha n + \beta} D_n(\alpha n + \beta), \quad p_n=\frac{\beta}{\alpha} D_n(\alpha n).
\end{equation}
\end{cor}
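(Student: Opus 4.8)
The plan is to verify directly that the stated triple satisfies Definition \ref{mcd} and then to read off the three coefficient formulas from Theorem \ref{general}. Write $H(t)=\mathcal{Q}_\alpha(t)^\beta$, $E(t)=\mathcal{Q}_\alpha(-t)^{-\beta}$ and $P(t)=\beta\mathcal{Q}'_\alpha(t)/\mathcal{Q}_\alpha(t)$. Since $\mathcal{Q}_\alpha(0)=1$ (because $\rho$ has constant term $1$), both $H$ and $E$ have constant term $1$, as the definition requires.

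For the logarithmic-derivative conditions of \eqref{pp}, the relation $P(t)=\frac{d}{dt}\log H(t)$ is immediate because $\log H(t)=\beta\log\mathcal{Q}_\alpha(t)$. For the other relation I would apply the chain rule to $\log E(t)=-\beta\log\mathcal{Q}_\alpha(-t)$, obtaining $\frac{d}{dt}\log E(t)=\beta\mathcal{Q}'_\alpha(-t)/\mathcal{Q}_\alpha(-t)$, which is exactly $P(-t)$. Hence the triple is symmetric, and by Proposition \ref{tri} all the convolution and transition identities apply automatically.

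The coefficient $h_n$ is read straight off \eqref{cc}, which gives $H(t)=\mathcal{Q}_\alpha(t)^\beta=1+\sum_{n\ge 1}\frac{\beta}{\alpha n+\beta}D_n(\alpha n+\beta)t^n$. For $e_n$, I would apply \eqref{cc} with $\beta$ replaced by $-\beta$ to expand $\mathcal{Q}_\alpha(s)^{-\beta}=1+\sum_{n\ge 1}\frac{-\beta}{\alpha n-\beta}D_n(\alpha n-\beta)s^n$, then substitute $s=-t$; the factor $(-1)^n$ from $(-t)^n$ produces the claimed $e_n=\frac{(-1)^n(-\beta)}{\alpha n-\beta}D_n(\alpha n-\beta)$.

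The step I expect to require the most care is $p_n$, since the target $\frac{\beta}{\alpha}D_n(\alpha n)$ does not appear verbatim in Theorem \ref{general}. The key idea is to specialize \eqref{cc2} at $\beta=0$: the left side collapses to $1+\alpha t\,\mathcal{Q}'_\alpha(t)/\mathcal{Q}_\alpha(t)$ while the right side becomes $1+\sum_{n\ge 1}D_n(\alpha n)t^n$, so that $\mathcal{Q}'_\alpha(t)/\mathcal{Q}_\alpha(t)=\frac1\alpha\sum_{n\ge 1}D_n(\alpha n)t^{n-1}$. Multiplying by $\beta$ gives $P(t)=\sum_{n\ge 1}\frac{\beta}{\alpha}D_n(\alpha n)t^{n-1}$, and comparing with the indexing $P(t)=\sum_{n\ge 0}p_{n+1}t^n$ of \eqref{ehpser} yields $p_n=\frac{\beta}{\alpha}D_n(\alpha n)$. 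The apparent division by $\alpha$ is harmless, since as $\alpha\to 0$ both sides tend to the $\mathcal{Q}_0=\rho$ value, matching $P(t)=\beta\rho'(t)/\rho(t)$; this limiting check is the only subtlety worth spelling out.
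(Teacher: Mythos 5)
Your proposal is correct and follows essentially the same route as the paper, which simply reads the three coefficient formulas off \eqref{cc} and \eqref{cc2} (your $\beta\mapsto-\beta$, $t\mapsto-t$ substitution for $e_n$ and the $\beta=0$ specialization of \eqref{cc2} for $p_n$ are exactly the intended manipulations, and the $\frac1\alpha$ is indeed harmless by \eqref{zr}). The only difference is that you also spell out the verification of Definition \ref{mcd}, which the paper leaves implicit.
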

\begin{proof}
The formulas in \e{hu} come from \e{cc} and \e{cc2}.
\end{proof}

\subsection{Lagrange inversion and the proof of Theorem \ref{general}}

\begin{theorem}[Lagrange inversion]
Let $F$ be a formal power series with  compositional inverse $G$ so that $G(x)=t \iff F(t)=x$. Then for all integers $n$ and all formal Laurent series $\phi$,  we have
\begin{equation}\label{lin}
  [x^n] \phi(G(x)) = [t^{-1}] \frac{\phi(t) F'(t)}{F(t)^{n+1}}.
\end{equation}
\end{theorem}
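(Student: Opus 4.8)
The plan is to prove this through the formalism of residues of Laurent series, where $[t^{-1}]$ denotes extraction of the coefficient of $t^{-1}$. The entire argument rests on two elementary properties of this operation: it is $R$-linear, and it annihilates derivatives, i.e. $[t^{-1}]\psi'(t)=0$ for every formal Laurent series $\psi$, since $\frac{d}{dt}t^k=kt^{k-1}$ can never produce a nonzero multiple of $t^{-1}$. First I would record the two observations that make the statement meaningful: because $F$ admits a compositional inverse it must be a delta series $F(t)=c_1t+c_2t^2+\cdots$ with $c_1$ invertible in $R$ (the setting of \eqref{star}), so $G(x)=d_1x+\cdots$ is likewise a delta series, $G(x)^{-1}$ is a well-defined Laurent series with a simple pole, and hence $\phi(G(x))$ is a well-defined Laurent series in $x$. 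I would then rewrite the left-hand side of \eqref{lin} as a residue, $[x^n]\phi(G(x))=[x^{-1}]\big(\phi(G(x))/x^{n+1}\big)$.

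The heart of the proof is a change-of-variables lemma for formal residues: for every formal Laurent series $\psi$,
\begin{equation*}
  [t^{-1}]\big(\psi(F(t))\,F'(t)\big)=[x^{-1}]\psi(x).
\end{equation*}
By linearity it suffices to verify this for $\psi(x)=x^k$ over all integers $k$. When $k\neq-1$ the left side is $[t^{-1}]\big(F(t)^kF'(t)\big)=\frac{1}{k+1}[t^{-1}]\frac{d}{dt}F(t)^{k+1}=0$, matching $[x^{-1}]x^k=0$. When $k=-1$ I would write $F(t)=c_1t\,u(t)$ with $u(0)=1$, so that $F'(t)/F(t)=1/t+u'(t)/u(t)$ where the second summand is an ordinary power series; hence $[t^{-1}]\big(F'(t)/F(t)\big)=1=[x^{-1}]x^{-1}$. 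To make the termwise use of linearity legitimate one notes that, since $F(t)^k$ has lowest degree $k$, the series $\psi(F(t))F'(t)$ is genuinely bounded below in degree and each power of $t$ receives contributions from only finitely many $k$.

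Finally I would apply the lemma with $\psi(x)=\phi(G(x))/x^{n+1}$ and the substitution $x=F(t)$. Using $G(F(t))=t$ gives $\psi(F(t))=\phi(t)/F(t)^{n+1}$, so the lemma yields
\begin{equation*}
  [x^{-1}]\frac{\phi(G(x))}{x^{n+1}}=[t^{-1}]\frac{\phi(t)}{F(t)^{n+1}}\,F'(t)=[t^{-1}]\frac{\phi(t)F'(t)}{F(t)^{n+1}}.
\end{equation*}
Combined with the rewriting $[x^n]\phi(G(x))=[x^{-1}]\big(\phi(G(x))/x^{n+1}\big)$ from the first step, this is exactly \eqref{lin}.

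The only genuinely delicate point is the change-of-variables lemma, and within it the $k=-1$ case, where the residue is forced to equal $1$ rather than $0$; this is precisely where the factor $F'(t)$ and the exponent $n+1$ in the denominator earn their keep. Everything else is bookkeeping with the two basic properties of $[t^{-1}]$ and the delta-series hypothesis that guarantees all the composed series are well defined.
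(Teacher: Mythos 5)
Your proof is correct. The paper does not actually prove this theorem---it defers to Theorem 2.1.1 and Eq.\ (2.1.7) of the cited reference \cite{Ge16}---and your argument is precisely the standard residue proof given there: reduce to the change-of-variables identity $[t^{-1}]\bigl(\psi(F(t))F'(t)\bigr)=[x^{-1}]\psi(x)$, verify it on monomials with the only nontrivial case $k=-1$ handled via $F'(t)/F(t)=1/t+u'(t)/u(t)$ (which is where the hypothesis that $F$ is a delta series with invertible leading coefficient enters, forcing the residue to be $1$ rather than the order of $F$), and then substitute $\psi(x)=\phi(G(x))/x^{n+1}$. The finiteness remarks justifying termwise linearity are exactly the right bookkeeping, so nothing is missing.
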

This is proved in Theorem 2.1.1 and (2.1.7) of \cite{Ge16}. As explained there, we find another useful form of \e{lin} by setting
$\psi(t):=\phi(t) t F'(t)/F(t)$. Then
\begin{equation*}
  \psi(G(x))=\phi(G(x)) G(x) \frac{F'(G(x))}{F(G(x))} = \phi(G(x))  \frac{G(x)}{x G'(x)}
\end{equation*}
and by \e{lin},
\begin{equation*}
  [x^n] \psi(G(x))\frac{G'(x)}{G(x)} = [x^{n+1}] \phi(G(x)) = [t^{-1}] \frac{\phi(t) F'(t)}{F(t)^{n+2}}=[t^{0}] \frac{\psi(t)}{F(t)^{n+1}}.
\end{equation*}

Writing $F(t)=t f(t)$, where $f$ necessarily has a nonzero constant term, gives the   versions we will need:

\begin{cor} \label{licor}
Let $F$ be a formal power series with  compositional inverse $G$. Write $F(t)=t f(t)$. Then for all integers $n$ and all formal Laurent series $\phi$ and $\psi$ we have
\begin{equation}\label{lin2}
  [x^n] \phi(G(x)) = [t^{n}] \left( 1+t \frac{f'(t)}{f(t)}\right)\frac{\phi(t)}{f(t)^{n}}, \qquad [x^n] \psi(G(x)) \frac{G'(x)}{G(x)} = [t^{n+1}] \frac{\psi(t)}{f(t)^{n+1}}.
\end{equation}
\end{cor}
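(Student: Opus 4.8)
The plan is to obtain both identities by direct substitution of $F(t)=tf(t)$ into the two forms of Lagrange inversion already displayed above, so the work is entirely a matter of computing $F'$ and $F^{n+1}$ and then performing a routine index shift in the coefficient extraction. No new machinery beyond the original theorem \e{lin} is needed.

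First I would treat the left identity in \e{lin2}. Writing $F(t)=tf(t)$ gives $F'(t)=f(t)+tf'(t)=f(t)\bigl(1+t f'(t)/f(t)\bigr)$ and $F(t)^{n+1}=t^{n+1}f(t)^{n+1}$, where $f(t)^{n+1}$ is an invertible formal power series because $f$ has nonzero constant term. Substituting these into \e{lin} yields
$$[x^n]\phi(G(x))=[t^{-1}]\frac{\phi(t)\bigl(1+t f'(t)/f(t)\bigr)}{t^{n+1}f(t)^{n}}.$$
Then I would invoke the elementary fact that for any formal Laurent series $g$ one has $[t^{-1}]\bigl(g(t)/t^{m}\bigr)=[t^{m-1}]g(t)$; applying this with $m=n+1$ converts the residue into $[t^{n}]$ of the numerator, which is exactly the claimed right-hand side.

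For the right identity in \e{lin2}, I would start instead from the derived form $[x^n]\psi(G(x))G'(x)/G(x)=[t^{0}]\psi(t)/F(t)^{n+1}$ that appears immediately before the corollary. Substituting $F(t)^{n+1}=t^{n+1}f(t)^{n+1}$ and applying the same coefficient-shift identity (now with $m=n+1$, extracting the constant term rather than the residue) gives $[t^{n+1}]\psi(t)/f(t)^{n+1}$, as required.

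The hard part, such as it is, is purely bookkeeping: keeping the index shift $[t^{-1}](\,\cdot\,/t^{n+1})=[t^{n}](\,\cdot\,)$ straight, and noting that the whole computation takes place in the field of formal Laurent series, where division by $f(t)^{n}$ and by $t^{n+1}$ is legitimate and the factor $1+tf'(t)/f(t)$ is a genuine power series. There is no genuine obstacle beyond the Lagrange inversion theorem itself, which I assume.
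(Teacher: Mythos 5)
Your proposal is correct and is exactly the paper's argument: the paper derives the second displayed form of Lagrange inversion via the substitution $\psi(t)=\phi(t)tF'(t)/F(t)$ and then simply states that writing $F(t)=tf(t)$ gives the corollary, leaving precisely the substitutions $F'(t)=f(t)\bigl(1+tf'(t)/f(t)\bigr)$, $F(t)^{n+1}=t^{n+1}f(t)^{n+1}$ and the coefficient shift that you carry out. Your write-up just makes explicit the bookkeeping the paper omits.
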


\begin{proof}[Proof of Theorem \ref{general}]
We will use Corollary \ref{licor}, and in its notation write $f(t):=\rho(t)^{-\alpha}$ with $G$ the compositional inverse of $F(t)=t f(t)$. Therefore $F(G(t))=t$ implies
\begin{equation} \label{wa}
  G(t) \cdot \rho(G(t))^{-\alpha} =t.
\end{equation}
Set $\mathcal{Q}_\alpha(t)$ to be $\rho(G(t))$ and we will see that this agrees with Definition \ref{lop}. First note that \e{wa} implies \e{eqsat}.
Applying Corollary \ref{licor} with $\phi(t)=\rho(t)^{\beta}$ and $f(x)=\rho(x)^{-\alpha}$ gives
\begin{equation*}
  [t^n] \mathcal{Q}_\alpha(t)^\beta = [t^{n}] \left( 1+t \frac{f'(t)}{f(t)}\right)\frac{\phi(t)}{f(t)^{n}}
  = [t^{n}] \left( \rho(t)^{\alpha n+\beta} - \alpha t \rho'(t) \rho(t)^{\alpha n+\beta-1}\right).
\end{equation*}
This equals
\begin{multline*}
   [t^{n}] \left( \left(1+ \frac{\alpha}{\alpha n+\beta} \right) \rho(t)^{\alpha n+\beta} - \frac{\alpha}{\alpha n+\beta} \frac{d}{dt} \left( t  \rho(t)^{\alpha n+\beta}\right)\right)\\
   = \left(1+ \frac{\alpha}{\alpha n+\beta} \right) D_n(\alpha n+\beta)
    - \frac{\alpha (n+1)}{\alpha n+\beta}  D_n(\alpha n+\beta) =
   \frac{\beta}{\alpha n + \beta} D_n(\alpha n + \beta).
\end{multline*}
Therefore \e{cc} is true and for $\beta=1$ we see that $\mathcal{Q}_\alpha(t)$ matches our original definition in \e{lmb}.

From \e{wa},
\begin{equation*}
   G(t) = t \mathcal{Q}_\alpha(t)^{\alpha}
  \qquad \text{and hence} \qquad t\frac{G'(t)}{G(t)} = 1+\alpha t \frac{\mathcal{Q}'_\alpha(t)}{\mathcal{Q}_\alpha(t)}.
\end{equation*}
Therefore
\begin{equation*}
  [t^n]\mathcal{Q}_\alpha(t)^\beta
  \left( 1+\alpha t \frac{\mathcal{Q}'_\alpha(t)}{\mathcal{Q}_\alpha(t)}\right)
  =[t^n] \mathcal{Q}_\alpha(t)^\beta t \frac{G'(t)}{G(t)} = [t^{n-1}] \rho(G(t))^\beta \frac{G'(t)}{G(t)}
\end{equation*}
and applying Corollary \ref{licor} with $\psi(t)=\rho(t)^{\beta}$ and $f(x)=\rho(x)^{-\alpha}$ gives \e{cc2} directly.

To prove \e{invst}, write $G_u(t)$ for the compositional inverse of $F_u(t):=t \rho(t)^u$. As in \e{wa},
\begin{equation} \label{wa3}
  G_u(t) \cdot \rho(G_u(t))^{u} =t.
\end{equation}
From the definition of $\mathcal{Q}_\alpha(t)$ and  \e{wa3},
\begin{equation} \label{wa8}
  t \mathcal{Q}_\alpha(t)^\beta = t \rho(G_{-\alpha}(t))^\beta = G_{-\alpha}(t) \cdot \rho(G_{-\alpha}(t))^{-\alpha} \cdot \rho(G_{-\alpha}(t))^\beta = F_{\beta-\alpha}(G_{-\alpha}(t)).
\end{equation}
For \e{invst} we need the compositional inverse of \e{wa8}:
\begin{multline*}
  t\left( \mathcal{Q}_\alpha(t)^\beta \right)^* = F_{-\alpha}(G_{\beta-\alpha}(t))
  = G_{\beta-\alpha}(t) \rho(G_{\beta-\alpha}(t))^{-\alpha} \\
  = t \rho(G_{\beta-\alpha}(t))^{\alpha-\beta}\rho(G_{\beta-\alpha}(t))^{-\alpha}
  = t \rho(G_{\beta-\alpha}(t))^{-\beta}.
\end{multline*}
Comparing this with the leftmost equality in \e{wa8} completes the proof of \e{invst}.
\end{proof}

Note that by \e{wa} and the definition of $G$, the general series associated to $\rho$ satisfies
\begin{equation*}
  \mathcal{Q}_\alpha(t) = \rho\left( (\rho(t)^{-\alpha})^*\right) = \left( (\rho(t)^{-\alpha})^*\right)^{1/\alpha}.
\end{equation*}

\section{Further examples of symmetric triples} \label{fur}

\subsection{Binomial, exponential and Bernoulli generalizations} \label{beb}
The simplest example of Theorem \ref{general} (besides $\rho(t)=1$) is $\rho(t)=1+t$. In that case $D_n(\alpha)=\binom{\alpha}{n}$.  The series $\mathcal{B}_\alpha(t):=\mathcal{Q}_{\rho,\alpha}(t)$ is called a generalized binomial series in \cite[Eq. (5.60)]{Knu}. Its properties   were first developed by Lambert in the 1750s and Theorem \ref{general} tells us it satisfies
\begin{align}
   \mathcal{B}_\alpha(t)  & =  1 + t \mathcal{B}_\alpha(t)^\alpha, \label{cats}\\
   \mathcal{B}_\alpha(t)^\beta & = 1+\sum_{n=1}^\infty \frac{\beta}{\alpha n + \beta} \binom{\alpha n + \beta}{n} t^n. \label{cats2}
\end{align}
Then Corollary \ref{newtrip} provides the corresponding symmetric triple, generalizing Example \ref{ex-bc}:

\begin{eg}[General binomial] \label{ex-bci2} 
\begin{equation}\label{inn2}
  e_n = \frac{\beta}{(1-\alpha)n + \beta}\binom{(1-\alpha)n + \beta}{n}, \qquad h_n = \frac{\beta}{\alpha n + \beta}\binom{\alpha n + \beta}{n},  \qquad p_n  = \frac{\beta}{\alpha}\binom{\alpha n}{n}.
\end{equation}
\end{eg}

This is \cite[Ex. 25(a)]{Macd}. The $\alpha$ and $\beta$ in \e{inn2} are arbitrary, and specializing them to $\alpha=2$, $\beta=1$, for example, gives a symmetric triple for the Catalan numbers $C_n:=\binom{2n}{n}/(n+1)$.

\begin{eg}[Catalan numbers] \label{cata}
For $n\gqs 1$, 
\begin{equation}\label{cat}
  e_n = (-1)^{n-1}C_{n-1}, \qquad h_n = C_n,  \qquad p_n  = \frac{n+1}{2}C_n.
\end{equation}
\end{eg}

\begin{proof}[Discussion] The $h_n \leftrightarrow p_n$ transition formula implies the recursion
\begin{equation}\label{crec}
  C_n =\sum_{k=0}^n \frac{1}{2^k k!} \dm_{n,k}\left( \frac{2C_1}1, \frac{3C_2}{2}, \frac{4C_3}3, \frac{5C_4}{4}, \dots \right).
\end{equation}
Solving \e{cats} in this case gives the familiar generating function
\begin{equation}\label{cgf}
  \mathcal{B}_2(t) = \frac{1-\sqrt{1-4t}}{2t} = \sum_{n=0}^\infty C_n t^n.
\end{equation}
See \cite[p. 203]{Knu}, \cite[\S\S 2.3, 3.3, 3.4]{Ge16} for many closely related series.
\end{proof}

The next simplest case has $\rho(t)=e^t$ and $D_n(\alpha)=\alpha^n/n!$. Following \cite[Eq. (5.60)]{Knu} again,  $\mathcal{E}_\alpha(t):=\mathcal{Q}_{\rho,\alpha}(t)$ is called a generalized exponential series.  Theorem \ref{general} shows
\begin{align} \label{ew}
   \mathcal{E}_\alpha(t)  & =  \exp\left( t \mathcal{E}_\alpha(t)^\alpha\right), \\
   \mathcal{E}_\alpha(t)^\beta & = 1+\sum_{n=1}^\infty \beta\frac{(\alpha n+\beta)^{n-1}}{n!} t^n. \label{ew2}
\end{align}
The Lambert $W$ function $W(t)$ and the tree function $T(t)$ (see \cite[Sect. 3.2]{Ge16}) can be recognized here from \e{ew}:
\begin{equation}\label{w-t}
  W(t) e^{W(t)}=t, \quad T(t) e^{-T(t)}=t \qquad \implies \qquad W(t)=t \mathcal{E}_1(-t), \qquad T(t)=t \mathcal{E}_1(t).
\end{equation}

Corollary \ref{newtrip} gives the following symmetric triple, generalizing Example \ref{exp}.

\begin{eg}[General exponential] \label{ex-epi2}
\begin{equation}\label{inex2}
  e_n = \beta\frac{(-\alpha n+\beta)^{n-1}}{n!}, \qquad h_n = \beta\frac{(\alpha n+\beta)^{n-1}}{n!},  \qquad p_n  = \beta\frac{(\alpha n)^{n-1}}{(n-1)!}.
\end{equation}
\end{eg}

\begin{proof}[Discussion]
This is \cite[Exs. 14, 25(b)]{Macd}. Special cases of \e{inex2} are, (see \e{muk}),
\begin{align*}
  (e^t,e^t,1) & \quad \text{when} \quad \alpha=0,  \beta=1, \\
  (e^{x t},e^{x t}, x ) & \quad \text{when} \quad \alpha=0,  \beta =x,\\
  (e^{x t},e^{x t}, x )^* & \quad \text{when} \quad  \alpha=-x,   \beta =-x.
\end{align*}
The convolution identity \e{peh} gives
\begin{equation}\label{peh2}
  \bigl(\alpha (n+1)\bigr)^{n} = \sum_{j=0}^{n} \binom{n}{j} \beta \bigl(\beta+\alpha j \bigr)^{j-1} \bigl(\alpha(n+1)-\beta-\alpha j\bigr)^{n-j}
\end{equation}
here, which is a special case 
of Abel's 1826 generalization of the binomial theorem \cite[p. 128]{Comtet}.
 \end{proof}

Taking $\rho(t)=t/(e^t-1)$ in Theorem \ref{general} has  $D_n(\alpha)=B^{(\alpha)}_n/n!$, recalling the N\"{o}rlund polynomials $B_n^{(z)}$ with generating function
\begin{equation}\label{stpo}
  \left(\frac{t}{e^{t}-1} \right)^{z} = \sum_{n=0}^\infty B_n^{(z)} \frac{t^n}{n!}.
\end{equation}
We may describe $\mathcal{U}_\alpha(t):=\mathcal{Q}_{\rho,\alpha}(t)$ as a generalized Bernoulli series, and Theorem \ref{general} implies
\begin{align}
   e^{t \cdot \mathcal{U}_\alpha(t)^\alpha}  & =  1+ t \cdot  \mathcal{U}_\alpha(t)^{\alpha-1}, \label{ud}\\
   \mathcal{U}_\alpha(t)^\beta & = 1+\sum_{n=1}^\infty \beta \frac{B^{(\alpha n + \beta)}_n}{\alpha n + \beta} \frac{t^n}{n!}. \label{ud2}
\end{align}
Then $\mathcal{U}_\alpha(t)$ is  related to the series $\mathcal{S}_\alpha(t)$ in \cite[p. 272]{Knu} by
$\mathcal{U}_\alpha(t)=\mathcal{S}_\alpha(-t)$.
The associated symmetric triple from Corollary \ref{newtrip} generalizes Examples \ref{nou}, \ref{logx} involving Bernoulli numbers and logarithms:

\begin{eg}[General Bernoulli] \label{ex-gber}
\begin{equation}\label{gber2}
  e_n= (-1)^n\frac{(-\beta)}{n!} \frac{B_n^{(\alpha n - \beta)}}{\alpha n - \beta}, \qquad h_n = \frac{\beta}{n!} \frac{B_n^{(\alpha n + \beta)}}{\alpha n + \beta}, \qquad p_n=\frac{\beta}{n!} \frac{B_n^{(\alpha n)}}{\alpha}.
\end{equation}
\end{eg}

The general series  $\mathcal{B}_\alpha(t)$, $\mathcal{E}_\alpha(t)$ and $\mathcal{U}_\alpha(t)$ have been treated formally so far. We show next that they converge for small enough $t$.

\begin{prop}
Let $\alpha$, $\beta$ and $t$ be any complex numbers.  The series \e{cats2} and \e{ud2} for $\mathcal{B}_\alpha(t)^\beta$ and $\mathcal{U}_\alpha(t)^\beta$ converge absolutely for $|t|< 2^{-|\alpha|-1}$. The series \e{ew2} for $\mathcal{E}_\alpha(t)^\beta$ converges absolutely for $|t|< 1/(e|\alpha|)$.
\end{prop}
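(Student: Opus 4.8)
The plan is to bound the Taylor coefficients of each series by Cauchy's inequality applied to the analytic function $\rho(t)^\gamma$, and then to read off the radius of convergence from the root test. I would first record a single general estimate. Suppose $\rho$ has $\rho(0)=1$ and is analytic and nonvanishing on the closed disc $|t|\le r$. On this simply connected region there is a branch $L(t)$ of $\log\rho(t)$ with $L(0)=0$, and $\rho(t)^\gamma=\exp(\gamma L(t))$ coincides with the formal series defined by \e{pot}. Setting $M_r:=\max_{|t|=r}|L(t)|$, on $|t|=r$ we have $|\rho(t)^\gamma|=e^{\Re(\gamma L(t))}\le e^{|\gamma|M_r}$, so Cauchy's inequality gives $|D_n(\gamma)|=|[t^n]\rho(t)^\gamma|\le r^{-n}e^{|\gamma|M_r}$. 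Taking $\gamma=\alpha n+\beta$, so $|\gamma|\le|\alpha|n+|\beta|$, and recalling from \e{cc} that the coefficient of $t^n$ is $c_n=\frac{\beta}{\alpha n+\beta}D_n(\alpha n+\beta)$, I obtain for all but at most one $n$ the bound
\[
  |c_n|^{1/n}\le\Bigl(\tfrac{|\beta|}{|\alpha n+\beta|}\Bigr)^{1/n}\frac{1}{r}\,e^{(|\alpha|+|\beta|/n)M_r}.
\]
Since the first factor tends to $1$, the root test yields $\limsup_n|c_n|^{1/n}\le e^{|\alpha|M_r}/r$, so each series has radius of convergence at least $r\,e^{-|\alpha|M_r}$, for every admissible $r$.

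It then remains to choose $r$ and estimate $M_r$ in each case. For $\mathcal{E}_\alpha$ I take $\rho(t)=e^t$, which is entire and nonvanishing, so $L(t)=t$ and $M_r=r$; the choice $r=1/|\alpha|$ gives radius at least $r\,e^{-|\alpha|r}=1/(e|\alpha|)$, which is the assertion for \e{ew2}. For $\mathcal{B}_\alpha$ I take $\rho(t)=1+t$, nonvanishing for $|t|<1$, with $L(t)=\log(1+t)$ and the elementary bound $|L(t)|\le\sum_{k\ge1}|t|^k/k=-\log(1-|t|)$, so $M_r\le-\log(1-r)$. At $r=1/2$ this is at most $\log 2$, giving radius at least $\tfrac12 e^{-|\alpha|\log 2}=2^{-|\alpha|-1}$, which is \e{cats2}.

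The main obstacle is the series \e{ud2} for $\mathcal{U}_\alpha$, where $\rho(t)=t/(e^t-1)$ is analytic and nonvanishing only on $|t|<2\pi$ and one must control $L(t)=\log\rho(t)$ on the circle $|t|=1/2$. Here I would factor $\rho(t)=1/(1+u)$ with $u=(e^t-1)/t-1=\sum_{k\ge1}t^k/(k+1)!$, and use $(k+1)!\ge 2\cdot k!$ to get $|u|\le\tfrac12(e^{|t|}-1)$. Then $L(t)=-\log(1+u)$ and $|L(t)|\le-\log(1-|u|)\le-\log\bigl(1-\tfrac12(e^{|t|}-1)\bigr)$, the last two steps being valid because $|u|<1$ on $|t|\le\tfrac12$. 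At $r=\tfrac12$ the single numerical inequality $e^{1/2}<2$ shows $\tfrac12(e^{1/2}-1)<\tfrac12$, whence $M_{1/2}<-\log\tfrac12=\log 2$ and the radius exceeds $2^{-|\alpha|-1}$. This device of pulling out the factor $1+u$ is what reduces the only delicate estimate to the elementary bound $e^{1/2}<2$.
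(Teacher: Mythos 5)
Your proof is correct, but it follows a genuinely different route from the paper. You derive everything from one complex-analytic lemma: since $D_n(\gamma)=[t^n]e^{\gamma L(t)}$ for the analytic branch $L=\log\rho$ (the identification with the formal series of \e{pot} is legitimate because both coefficients are polynomials in $\gamma$ agreeing at all positive integers), Cauchy's inequality gives $|D_n(\gamma)|\lqs r^{-n}e^{|\gamma|M_r}$ and hence a radius of convergence at least $r\,e^{-|\alpha|M_r}$ for every admissible $r$; the three cases then reduce to estimating $M_r$ for $\rho=1+t$, $e^t$, $t/(e^t-1)$, with the only delicate point (that $|u|<1$ on $|t|\lqs\tfrac12$ in the Bernoulli case) resting on $e^{1/2}<2$. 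The paper instead argues coefficient by coefficient with real-variable bounds: for $\mathcal{B}_\alpha$ it uses $|\binom{z}{k}|\lqs\binom{k+|z|-1}{k}\lqs 2^{|z|+k}$ obtained from evaluating $(1-\tfrac12)^{-r}$; for $\mathcal{U}_\alpha$ it combines this with the expansion \e{nr2} of $B_n^{(z)}$ and an imported bound $|\dm_{n,k}(1/2!,1/3!,\dots)|\lqs 2^{n-k}$ from the companion paper; and for $\mathcal{E}_\alpha$ it invokes Stirling's formula. Your argument is more uniform and self-contained (no appeal to the De Moivre polynomial bound), and it makes transparent where the constants come from --- e.g.\ optimizing $r=1/|\alpha|$ recovers the exact radius $1/(e|\alpha|)$ in the exponential case, and optimizing $r$ in the binomial case would even improve on $2^{-|\alpha|-1}$ for large $|\alpha|$ --- at the cost of requiring $\rho$ to be analytic and nonvanishing on a disc, which the paper's purely combinatorial estimates do not need. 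One small presentational point: you should note (as you implicitly do with ``for all but at most one $n$'') that the exceptional index with $\alpha n+\beta=0$ has a finite coefficient by \e{zr} and so does not affect the root test.
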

\begin{proof}
Take $r$ and $\varepsilon$ to be real numbers with $r\gqs 0$ and $|\varepsilon|<1$. Then
\begin{equation*}
  (1-\varepsilon)^{-r} = \sum_{k=0}^\infty \binom{-r}{k} (-\varepsilon)^k = \sum_{k=0}^\infty \binom{k+r-1}{k} \varepsilon^k
\end{equation*}
and $\binom{k+r-1}{k} \gqs 0$. Setting $\varepsilon =1/2$ shows
\begin{equation}\label{bt}
  \sum_{k=0}^n \binom{k+r-1}{k} \frac 1{2^k} \lqs 2^r \qquad \text{and} \qquad \binom{n+r-1}{n}  \lqs 2^{r+n}.
\end{equation}
Since we easily have $|\binom{z}{k}| \lqs \binom{k+|z|-1}{k}$ for all $z\in \C$, it follows from the right bound in \e{bt} that
\begin{equation*}
  \left| \frac{\beta}{\alpha n + \beta} \binom{\alpha n + \beta}{n} \right| \ll 2^{n+|\alpha n + \beta|}
\end{equation*}
and this gives the desired domain of convergence for $\mathcal{B}_\alpha(t)^\beta$.

Looking ahead to \e{nr2}, we may apply the bound $|\dm_{n,k}\left(1/2!,1/3!,\dots \right)| \lqs 2^{n-k}$ from Lemma 2.2 of \cite{odm}, along with the left bound in \e{bt}, to see that
\begin{equation*}
   \left| \frac{B^{(\alpha n + \beta)}_n}{\alpha n + \beta} \frac{\beta}{n!} \right| \ll
   \sum_{k=0}^n \binom{k+|\alpha n + \beta|-1}{k} 2^{n-k} \lqs 2^{n+|\alpha n + \beta|}.
\end{equation*}

Lastly, a routine application of Stirling's formula gives the convergence for $\mathcal{E}_\alpha(t)^\beta$.
\end{proof}

The radius of convergence we gave for $\mathcal{E}_\alpha(t)^\beta$ is  exact by the ratio test (with convergence in all of $\C$ when $\alpha=0$). It would be interesting to find the exact radii of convergence in the other two cases, and if the functions can be continued past that.

General series associated to the partition and representation series in \e{genpart}, \e{rpw} and \e{trp} may also be constructed, along with their symmetric triples.

\subsection{N\"{o}rlund polynomials} \label{norl}
To get a better understanding of Example \ref{ex-gber} we next look at the N\"{o}rlund polynomials in  detail.  Expanding $\rho(t)$ and its reciprocal in \e{stpo} and then applying \e{pot} finds
\begin{align}
  \frac{B_n^{(z)}}{n!} & =  \sum_{k=0}^n \binom{z}{k} \dm_{n,k}\left(\frac{B_1}{1!},\frac{B_2}{2!},\frac{B_3}{3!},\dots \right) \label{nr} \\
 & =  \sum_{k=0}^n \binom{-z}{k} \dm_{n,k}\left(\frac{1}{2!},\frac{1}{3!},\frac{1}{4!},\dots \right), \label{nr2}
\end{align}
making $B_n^{(z)}$  a polynomial in $z$ of degree $n$ with rational coefficients. For small $n$:
\begin{table}[ht]
\centering
\begin{tabular}{c|c|c|c|c|c}
 $n$   & $0$ & 1 & 2 & 3 & 4  \\
\hline
 $(-2)^n B_n^{(z)}$ & $1$ &  $z$ & $z^2-\frac{z}{3}$ & $z^3-z^2$ & $z^4-2 z^3+\frac{z^2}{3}+\frac{2
   z}{15}$\\
\end{tabular}
\end{table}

\noindent
They are closely related to the Stirling polynomials $\sigma_n(z)$ of \cite[Eq. (6.45)]{Knu} with $(-1)^n B_n^{(z)} = n! z \sigma_n(z)$.
The recursion
\begin{equation}\label{brec}
  z B_n^{(z+1)} =(z-n)B_n^{(z)} -z n B_{n-1}^{(z)},
\end{equation}
as in \cite[p. 329]{chacha},  comes from differentiating \e{stpo}.

 The next formula for $B_n^{(z)}$ is similar to \e{nr} and \e{nr2} but relates  to the logarithm. For this, note that
\begin{align}
  \mathcal{U}_0(t) = \rho(t) = \frac{t}{e^{t}-1} & \implies  \mathcal{U}_0(t)^{-1}= \frac{e^{t}-1}{t}\notag\\
  & \implies  (\mathcal{U}_0(t)^{-1})^* = \mathcal{U}_1(t) = \frac{\log(1+t)}{t} \label{po}\\
  & \implies  \mathcal{U}_1(t)^{\beta} = \left(\frac{\log(1+t)}{t} \right)^{\beta}. \notag
\end{align}
Hence \e{ud2} provides
\begin{equation}\label{ud3}
  \left(\frac{\log(1+t)}{t} \right)^{z} = \sum_{n=0}^\infty \frac{z}{n+z} B_n^{(n+z)} \frac{t^n}{n!}.
\end{equation}
and then, expanding the left side with \e{pot},
\begin{equation}\label{nr3}
  \frac{B_n^{(n+z)}}{n!}  = (-1)^n \frac{n+z}{z} \sum_{k=0}^n \binom{z}{k} \dm_{n,k}\left(\frac{1}{2},\frac{1}{3},\frac{1}{4},\dots \right).
\end{equation}
We also see from \e{po} that the logarithmic triple in Example \ref{logx} is just the case $\alpha=\beta=1$ of Example \ref{gber2} with $t\to -t$. The formulas \e{logb} for the Cauchy numbers follow.

The N\"{o}rlund polynomials are also related to the Stirling numbers. For integers $m$, $k \gqs 0$
\begin{equation} \label{nut}
  \stirb{m+k}{k} = \binom{m+k}{m} B_{m}^{(-k)}, \qquad
   \stira{m+k}{k} =  \binom{-k}{m} B_{m}^{(m+k)},
\end{equation}
where the left identity comes from comparing \e{bakb} and \e{stpo}, while the right one comes from comparing \e{baka} and \e{ud3}.
The right sides of both identities in \e{nut} are degree $2m$  polynomials in $k$, allowing us to extend the Stirling number definitions. 
Inserting our formulas \e{nr}, \e{nr2} and \e{nr3} into \e{nut}  recovers Kramp's  identities \e{kr} and \e{kr2} in two cases.

Now $B_{m}^{(k)}$ may be given explicitly in  different ranges of  $k\in \Z$. For $k$ close to $m$ we have
\begin{gather}\label{bnz}
    B_m^{(m-1)} = -(m-1)\mathcal{C}^-_m,  \qquad   B_m^{(m)} = (-1)^m \mathcal{C}^+_m, \\
 \frac{B_m^{(m+1)}}{m!} = (-1)^m,  \qquad   \frac{B_m^{(m+2)}}{m!} = (-1)^m H_{m+1},  \qquad   \frac{B_m^{(m+3)}}{m!} = H_{m+2}^2 - H_{m+2}^{(2)}, \label{bnzz}
\end{gather}
by \e{logb}, \e{nut} and the Stirling cycle number formulas \e{harm1}, \e{bg3} and \e{bg4}. For $k$ close to $0$ we have
\begin{gather}\label{bnz2}
     B_m^{(1)} = B_m,  \qquad   B_m^{(2)} = (1-m)B_m -m B_{m-1}, \\
 B_m^{(0)} = \delta_{m,0},  \qquad   \frac{B_m^{(-1)}}{m!} = \frac 1{(m+1)!},  \qquad   \frac{B_m^{(-2)}}{m!} = \frac{2^{m+2}-2}{(m+2)!}, \label{bnzz2}
\end{gather}
with  \e{bnzz2} following from \e{nut} and the Stirling subset number formula \e{ssub}. The first identity in \e{bnz2} is clear from \e{stpo}; apply the recursion \e{brec} for the second.

The following two examples are cases of Example \ref{ex-gber}. The first has $\alpha=0$, $\beta=2$ corresponding to the square of Example \ref{nou}. The second has $\alpha=1$, $\beta=2$ corresponding to the square of Example \ref{logx}.

\begin{eg}[Bernoulli numbers, squared] 
\begin{equation}\label{xber2w}
  e_n= (-1)^n \frac{2^{n+2}-2}{(n+2)!}, \qquad  h_n= \frac{(1-n)B_n -n B_{n-1}}{n!}, \qquad  p_k= (-1)^{n-1}\frac{2 B_n}{n!}.
\end{equation}
\end{eg}

\begin{eg}[Logarithms, squared] \label{logxs}
\begin{equation*}
  e_n=(-1)^{n-1}\frac{2}{n-2}\frac{B_n^{(n-2)}}{n!} , \qquad h_n=(-1)^n \frac{2}{n+2} H_{n+1}, \qquad p_n=\frac{2\mathcal{C}^+_n}{n!}.
\end{equation*}
\end{eg}

\subsection{Special values of De Moivre polynomials}

It is useful to be able to simplify De Moivre polynomials and, in particular,
know when they can be evaluated explicitly. Clearly
\begin{equation}\label{expl}
  \dm_{m+k,k}(a_0,a_1,a_2,\dots) = b_m  \iff [x^m]\left(a_0+a_1 x + a_2 x^2+ \cdots \right)^k = b_m,
\end{equation}
for $m,k \gqs 0$, and so we are looking for power series whose powers are known. Simple examples are
\begin{equation*}
  ((1+x)^\beta)^k = (1+x)^{\beta k}, \qquad (e^{\beta x})^k = e^{\beta k x}
\end{equation*}
for $\beta$ any complex number or indeterminate, giving
\begin{equation} \label{idb}
  \dm_{m+k,k}\left(\binom{\beta}{0},\binom{\beta}{1},\binom{\beta}{2},\dots \right) = \binom{\beta k}{m}, \qquad
  \dm_{m+k,k}\left(\frac{\beta^0}{0!},\frac{\beta^1}{1!},\frac{\beta^2}{2!},\dots \right) = \frac{(\beta k)^m}{m!}.
\end{equation}

Our work in sections \ref{beb}, \ref{norl} shows more possibilities.
From the generalized exponential series $\mathcal{E}_\alpha(t)$ in \e{ew2},
\begin{equation}\label{ewg}
   \dm_{m+k,k}\left(1,\frac{\beta}{1!},(2\alpha+\beta)\frac{\beta}{2!},(3\alpha+\beta)^2\frac{\beta}{3!},
   (4\alpha+\beta)^3\frac{\beta}{4!},\dots \right) = (m\alpha+k \beta)^{m-1} \frac{k\beta}{m!},
\end{equation}
for arbitrary $\alpha$ and $\beta$.
The case $\alpha=\beta=1$  is
\begin{equation}\label{ewg2}
   \dm_{m+k,k}\left(\frac{1^0}{1!},\frac{2^1}{2!},\frac{3^2}{3!},
   \frac{4^3}{4!},\dots \right) = (m+k)^{m-1} \frac{k}{m!},
\end{equation}
valid for $m$, $k\gqs 0$ and not both $0$. This is equivalent to an identity on p. 89 of \cite{Chu19}.

From the generalized binomial series $\mathcal{B}_\alpha(t)$ in \e{cats2},
\begin{multline}\label{gb}
   \dm_{m+k,k}\left(1,\frac{\beta}{\alpha+\beta}\binom{\alpha+\beta}{1},
   \frac{\beta}{2\alpha+\beta}\binom{2\alpha+\beta}{2},\frac{\beta}{3\alpha+\beta}\binom{3\alpha+\beta}{3},\dots \right) \\
   = \frac{\beta k}{\alpha m+\beta k}\binom{\alpha m+\beta k}{m}.
\end{multline}
As we saw in Example \ref{cata}, the case $\alpha=2$, $\beta=1$ of this gives the Catalan numbers, so that
\begin{equation}\label{gb2}
  \dm_{m+k,k}\left(C_0, C_1, C_2,  \dots \right)  = \frac{k}{2m+k}\binom{2m+k}{m}.
\end{equation}
This identity \e{gb2} is in fact equivalent to Theorem  1.2 of \cite{Qi17} where they also look at variations and applications.

The generalized Bernoulli series $\mathcal{U}_\alpha(t)$ in \e{ud2} provides
\begin{equation}\label{ug}
   \dm_{m+k,k}\left(1,\frac{B_1^{(\alpha+\beta)}}{\alpha+\beta}\frac{\beta}{1!},
   \frac{B_2^{(2\alpha+\beta)}}{2\alpha+\beta}\frac{\beta}{2!},
   \frac{B_3^{(3\alpha+\beta)}}{3\alpha+\beta}\frac{\beta}{3!},
   ,\dots \right) = \frac{B_m^{(\alpha m +\beta k)}}{\alpha m+\beta k}\frac{\beta k}{m!}.
\end{equation}
The evaluations in section \ref{norl} and \e{bnz} -- \e{bnzz2} give many explicit cases of this. For example, with $\alpha=0$ and $\beta=1,-1,-2$,
\begin{align}
  \dm_{m+k,k}\left(1,\frac{B_1}{1!},
   \frac{B_2}{2!},
   \frac{B_3}{3!}
   ,\dots \right)  & = \frac{B_m^{(k)}}{m!},
   \\
   \dm_{m+k,k}\left(\frac{1}{1!},
   \frac{1}{2!},
   \frac{1}{3!}
   ,\dots \right) &  = \frac{k!}{(m+k)!} \stirb{m+k}{k}, \label{fut2}
   \\
  \dm_{m+k,k}\left(\frac{2^2-2}{2!}, \frac{2^3-2}{3!}, \frac{2^4-2}{4!},
   ,\dots \right)  & = \frac{(2k)!}{(m+2k)!} \stirb{m+2k}{2k}.
\end{align}
For $\alpha=1$ and $\beta=1,2$,
\begin{align}
  \dm_{m+k,k}\left(\frac{1}{1},
   \frac{1}{2},
   \frac{1}{3}
   ,\dots \right) &  = \frac{k!}{(m+k)!} \stira{m+k}{k}, \label{fut}
   \\
   \dm_{m+k,k}\left(\frac{H_1}{2}, \frac{H_2}{3}, \frac{H_3}{4},  \cdots \right) & =  \frac{(2k)!}{2^k (m+2k)!} \stira{m+2k}{2k}.
\end{align}
The series \e{genpart}, \e{rpw}  and \e{trp} may also be used to find similar identities.

{\small \bibliography{dem-bib} }

\begin{thebibliography}{QSLK17}

\bibitem[AAR99]{AAR}
George~E. Andrews, Richard Askey, and Ranjan Roy.
\newblock {\em Special functions}, volume~71 of {\em Encyclopedia of
  Mathematics and its Applications}.
\newblock Cambridge University Press, Cambridge, 1999.

\bibitem[And98]{And76}
George~E. Andrews.
\newblock {\em The theory of partitions}.
\newblock Cambridge Mathematical Library. Cambridge University Press,
  Cambridge, 1998.
\newblock Reprint of the 1976 original.

\bibitem[AS64]{AS}
Milton Abramowitz and Irene~A. Stegun.
\newblock {\em Handbook of mathematical functions with formulas, graphs, and
  mathematical tables}.
\newblock National Bureau of Standards Applied Mathematics Series, No. 55. U.
  S. Government Printing Office, Washington, D. C., 1964.
\newblock 

\bibitem[Bat17]{Bat}
Necdet Batir.
\newblock On some combinatorial identities and harmonic sums.
\newblock {\em Int. J. Number Theory}, 13(7):1695--1709, 2017.

\bibitem[BBT09]{bo}
Sadek Bouroubi and Nesrine Benyahia~Tani.
\newblock A new identity for complete {B}ell polynomials based on a formula of
  {R}amanujan.
\newblock {\em J. Integer Seq.}, 12(3):Article 09.3.5, 6, 2009.

\bibitem[Bel28]{Bell27}
Eric~T. Bell.
\newblock Partition polynomials.
\newblock {\em Ann. of Math. (2)}, 29(1-4):38--46, 1927/28.

\bibitem[Bla16]{Bla}
Iaroslav~V. Blagouchine.
\newblock Two series expansions for the logarithm of the gamma function
  involving {S}tirling numbers and containing only rational coefficients for
  certain arguments related to {$\pi^{-1}$}.
\newblock {\em J. Math. Anal. Appl.}, 442(2):404--434, 2016.

\bibitem[Bra06]{Bra}
David Branson.
\newblock Stirling number representations.
\newblock {\em Discrete Math.}, 306(5):478--494, 2006.

\bibitem[Bre93]{Bre}
Francesco Brenti.
\newblock Permutation enumeration symmetric functions, and unimodality.
\newblock {\em Pacific J. Math.}, 157(1):1--28, 1993.

\bibitem[Cha02]{chacha}
Charalambos~A. Charalambides.
\newblock {\em Enumerative combinatorics}.
\newblock CRC Press Series on Discrete Mathematics and its Applications.
  Chapman \& Hall/CRC, Boca Raton, FL, 2002.

\bibitem[Chu97]{Chu97}
Wenchang Chu.
\newblock Hypergeometric series and the {R}iemann zeta function.
\newblock {\em Acta Arith.}, 82(2):103--118, 1997.

\bibitem[Chu19]{Chu19}
Wenchang Chu.
\newblock Logarithms of a binomial series: extension of a series of {K}nuth.
\newblock {\em Math. Commun.}, 24(1):83--90, 2019.

\bibitem[Com74]{Comtet}
Louis Comtet.
\newblock {\em Advanced combinatorics}.
\newblock D. Reidel Publishing Co., Dordrecht, enlarged edition, 1974.
\newblock The art of finite and infinite expansions.

\bibitem[DM97]{dem}
Abraham De~Moivre.
\newblock A method of raising an infinite multinomial to any given power, or
  extracting any given root of the same.
\newblock {\em Philos. Trans. R. Soc. London}, 19(230):619–--625, 1697.
\newblock Also in {\em Miscellanea analytica de seriebus et quadraturis}, J.
  Tonson \& J. Watts, London, 1730.

\bibitem[Egg19]{Egg}
Eric~S. Egge.
\newblock {\em An introduction to symmetric functions and their combinatorics},
  volume~91 of {\em Student Mathematical Library}.
\newblock American Mathematical Society, Providence, RI, [2019] \copyright
  2019.

\bibitem[EJ]{EJ}
Mark Elin and Fiana Jacobzon.
\newblock Families of inverse functions: coefficient bodies and the
  {F}ekete--{S}zeg\"{o} problem.
\newblock arXiv:2012.07153.

\bibitem[FS95]{FS95}
Philippe Flajolet and Robert Sedgewick.
\newblock Mellin transforms and asymptotics: finite differences and {R}ice's
  integrals.
\newblock volume 144, pages 101--124. 1995.
\newblock Special volume on mathematical analysis of algorithms.

\bibitem[Fun30]{Funk}
H.~Gray Funkhouser.
\newblock A {S}hort {A}ccount of the {H}istory of {S}ymmetric {F}unctions of
  {R}oots of {E}quations.
\newblock {\em Amer. Math. Monthly}, 37(7):357--365, 1930.

\bibitem[Ges16]{Ge16}
Ira~M. Gessel.
\newblock Lagrange inversion.
\newblock {\em J. Combin. Theory Ser. A}, 144:212--249, 2016.

\bibitem[GKP94]{Knu}
Ronald~L. Graham, Donald~E. Knuth, and Oren Patashnik.
\newblock {\em Concrete mathematics}.
\newblock Addison-Wesley Publishing Company, Reading, MA, second edition, 1994.
\newblock A foundation for computer science.

\bibitem[Gou99]{Gou}
Henry~W. Gould.
\newblock The {G}irard-{W}aring power sum formulas for symmetric functions and
  {F}ibonacci sequences.
\newblock {\em Fibonacci Quart.}, 37(2):135--140, 1999.

\bibitem[Hof17]{Hoff}
Michael~E. Hoffman.
\newblock Harmonic-number summation identities, symmetric functions, and
  multiple zeta values.
\newblock {\em Ramanujan J.}, 42(2):501--526, 2017.

\bibitem[Jha]{Jhatri}
Sumit~Kumar Jha.
\newblock An identity involving number of representations of $n$ as a sum of
  $r$ triangular numbers.
\newblock arXiv:2011.11038.

\bibitem[Jha21]{Jha21}
Sumit~Kumar Jha.
\newblock A formula for the number of overpartitions of {$n$} in terms of the
  number of representations of {$n$} as a sum of {$r$} squares.
\newblock {\em Integers}, 21:Paper No. A82, 3, 2021.

\bibitem[Knu92]{K2}
Donald~E. Knuth.
\newblock Two notes on notation.
\newblock {\em Amer. Math. Monthly}, 99(5):403--422, 1992.

\bibitem[Kon00]{Kon}
John Konvalina.
\newblock A unified interpretation of the binomial coefficients, the {S}tirling
  numbers, and the {G}aussian coefficients.
\newblock {\em Amer. Math. Monthly}, 107(10):901--910, 2000.

\bibitem[Mac60]{Macm}
Percy~A. MacMahon.
\newblock {\em Combinatory analysis}.
\newblock Chelsea Publishing Co., New York, 1960.

\bibitem[Mac95]{Macd}
Ian~G. Macdonald.
\newblock {\em Symmetric functions and {H}all polynomials}.
\newblock Oxford Mathematical Monographs. The Clarendon Press, Oxford
  University Press, New York, second edition, 1995.
\newblock With contributions by A. Zelevinsky, Oxford Science Publications.

\bibitem[Mor71]{Mo71}
Alun~O. Morris.
\newblock Generalizations of the {C}auchy and {S}chur identities.
\newblock {\em J. Combinatorial Theory Ser. A}, 11:163--169, 1971.

\bibitem[Mor17]{Mort}
Eric~T. Mortenson.
\newblock A {K}ronecker-type identity and the representations of a number as a
  sum of three squares.
\newblock {\em Bull. Lond. Math. Soc.}, 49(5):770--783, 2017.

\bibitem[MR15]{MR15}
Anthony Mendes and Jeffrey Remmel.
\newblock {\em Counting with symmetric functions}, volume~43 of {\em
  Developments in Mathematics}.
\newblock Springer, Cham, 2015.

\bibitem[O'S]{odm}
Cormac O'Sullivan.
\newblock {D}e {M}oivre and {B}ell polynomials.
\newblock arxiv.

\bibitem[QSLK17]{Qi17}
Feng Qi, Xiao-Ting Shi, Fang-Fang Liu, and Dmitry~V. Kruchinin.
\newblock Several formulas for special values of the {B}ell polynomials of the
  second kind and applications.
\newblock {\em J. Appl. Anal. Comput.}, 7(3):857--871, 2017.

\bibitem[Rad73]{Ra}
Hans Rademacher.
\newblock {\em Topics in analytic number theory}.
\newblock Springer-Verlag, New York, 1973.
\newblock Edited by E. Grosswald, J. Lehner and M. Newman, Die Grundlehren der
  mathematischen Wissenschaften, Band 169.

\bibitem[Saa93]{Saa}
Louis Saalsch\"{u}tz.
\newblock {\em Vorlesungen \"{u}ber die {B}ernoullischen {Z}ahlen: ihren
  {Z}usammenhang mit den {S}ecanten-{C}oefficienten und ihre wichtigeren
  {A}nwendungen}.
\newblock Springer, Berlin, 1893.

\bibitem[Ses17]{Roman}
Javier Sesma.
\newblock The {R}oman harmonic numbers revisited.
\newblock {\em J. Number Theory}, 180:544--565, 2017.

\bibitem[Sta99]{Sta}
Richard~P. Stanley.
\newblock {\em Enumerative combinatorics. {V}ol. 2}, volume~62 of {\em
  Cambridge Studies in Advanced Mathematics}.
\newblock Cambridge University Press, Cambridge, 1999.
\newblock With a foreword by Gian-Carlo Rota and appendix 1 by Sergey Fomin.

\bibitem[Ste43]{Ste}
Moritz~A. Stern.
\newblock Ueber die {C}o\"{e}fficienten der {S}ecantenreihe.
\newblock {\em J. Reine Angew. Math.}, 26:88--91, 1843.

\bibitem[Sun05]{Sun05}
Zhi-Hong Sun.
\newblock On the properties of {N}ewton-{E}uler pairs.
\newblock {\em J. Number Theory}, 114(1):88--123, 2005.

\end{thebibliography}

{\small 
\vskip 5mm
\noindent
\textsc{Dept. of Math, The CUNY Graduate Center, 365 Fifth Avenue, New York, NY 10016-4309, U.S.A.}

\noindent
{\em E-mail address:} \texttt{cosullivan@gc.cuny.edu}
}

\end{document}